\newcommand{\RR}{\mathbb{R}}
\newcommand{\FF}{\mathcal{F}}
\newcommand{\PP}{\mathcal{P}}
\newcommand{\DD}{\mathcal{D}}
\DeclareMathOperator{\sub}{sub}
\newtheorem{thm}{Theorem}[section]
\newtheorem{Q}[thm]{Question}
\newtheorem{lemma}[thm]{Lemma}
\newtheorem{claim}{Claim}
\newtheorem{prop}[thm]{Proposition}
\newtheorem{cor}[thm]{Corollary}
\newtheorem{pro}[thm]{Problem}
\theoremstyle{definition}
\newtheorem{remark}[thm]{Remark}
\newtheorem{eg}[thm]{Example}
\DeclareMathOperator{\conv}{conv}
\DeclareMathOperator{\cone}{cone}
\DeclareMathOperator{\HST}{HST}
\newcommand*\patchAmsMathEnvironmentForLineno[1]{%
	\expandafter\let\csname old#1\expandafter\endcsname\csname #1\endcsname
	\expandafter\let\csname oldend#1\expandafter\endcsname\csname end#1\endcsname
	\renewenvironment{#1}%
	{\linenomath\csname old#1\endcsname}%
	{\csname oldend#1\endcsname\endlinenomath}}% 
\newcommand*\patchBothAmsMathEnvironmentsForLineno[1]{%
	\patchAmsMathEnvironmentForLineno{#1}%
	\patchAmsMathEnvironmentForLineno{#1*}}%
\title{On an extension problem on the moment curve}
\date{}%\today} 
\author{Seunghun Lee\footnote{%
		Department of Mathematics, Keimyung University, Daegu, South Korea. Partially supported by the Institute for Basic Science (IBS-R029-C1) and the Department of Mathematical Sciences of Korea Advanced Institute of Science and Technology (BK21). \texttt{seunghun.math@gmail.com}.} %\orcidlink{0000-0003-0838-1680} %
	\ and Eran Nevo\footnote{%
		Mathematics Research Institute, Universidad de Valladolid, Spain and 
        Einstein Institute of Mathematics, Hebrew University of Jerusalem, Israel, and Harvard CMSA, Cambridge, USA. Partially supported by Israel Science Foundation grants ISF-2480/20 and ISF-687/24. \texttt{nevo@math.huji.ac.il}}%
}
\begin{document}
	\maketitle

\begin{abstract}
We show that for $2\le d\le 4$, every finite geometric simplicial complex $\Delta$ in $\RR^d$ with vertices on the moment curve 
can be extended to a triangulation $T$ of the cyclic polytope $C$ where $\Delta, T$ and $C$ all have the same vertex set. Further, for $d\ge 5$ we construct for every $n\ge d+3$ complexes $\Delta$ on $n$ vertices for which no such triangulations $T$ exist. 

Our result for $d=4$ has the following novel algebraic application, due to a correspondence
by Oppermann and Thomas (JEMS, 2012): 
every maximal rigid object in $\mathcal{O}_{A_n^{2}}$ is cluster tilting, where $\mathcal{O}_{A_n^{\delta}}$ denotes a higher dimensional cluster category introduced by Oppermann and Thomas for $A_n^\delta$, where  $A_n^\delta$ denotes a higher Auslander algebra of linearly oriented type $A$.
\end{abstract}
    
\section{Introduction} \label{sec_intro}
The \textit{moment curve} in $\RR^d$, $\gamma_d=\{(t,t^2, \dots, t^d):\ t\in \RR\}$, has remarkable properties used in applications to combinatorics and discrete geometry, in particular in the study of polytopes: for every finite point set $A\subseteq \gamma_d$ with $|A|=n\ge d+1$, the convex hull of $A$, $\conv(A)$, is the \textit{cyclic polytope} on $n$ vertices of dimension $d$, $C(n,d)$, a simplicial polytope whose face lattice is determined by $n$ and $d$, and which maximizes the face numbers among all $d$-dimensional polytopes on $n$ vertices by McMullen's celebrated upper bound theorem \cite{upper_bound_mcmullen}. See e.g. the textbooks~\cite{triangulations_book, grunbaum_polytope_book, lectures_on_polytopes_book} for more details.     

\smallskip 
We consider the following natural extension problem.

\begin{Q} \label{question_main}
Does \emph{every} geometric simplicial complex in $\RR^d$ on a finite point set $A\subseteq \gamma_d$ extend to a triangulation of $\conv(A)$ without adding new vertices?
\end{Q}

This problem is in fact a combinatorial one, as the question of whether a collection of $d$-simplices on the ground set $A\subseteq \gamma_d$ induces a triangulation of $\conv(A)$ can be phrased in combinatorial terms: intersections correspond to interlacing patterns (see Proposition \ref{prop_overlap}) which are forbidden, and boundary faces are identified by the Gale evenness condition (e.g.~\cite{grunbaum_polytope_book}) and one checks that exactly these $(d-1)$-faces are contained in a unique $d$-simplex.

As we shall see, the answer to Question \ref{question_main} depends on the dimension $d$. Considering a more general situation first, note that for $d=2$ every simplicial complex on a finite point set $A\subseteq \RR^2$ extends to a triangulation of $\conv(A)$, while for $d=3$ there are finite point sets $A\subseteq \RR^3$ in convex general position, and simplicial complexes on them, which do not extend to a triangulation of $\conv(A)$ without adding new vertices. As a simple example consider the Sch\"{o}nhardt's polyhedron~\cite{Schönhardt1928}, see \cite[Lemma 3.6.2, Figure 3.43]{triangulations_book}: let $A$ be the set of six vertices of a triangular prism with its bottom triangle face rotated a little, and let $\Delta$ be the union of the three tetrahedra where their vertex sets correspond to the three square sides of the prism, after being rotated. Then $\Delta$ does not extend to a triangulation of $\conv(A)$ without new vertices.

Perhaps surprisingly, we prove that the answer to Question~\ref{question_main} is Yes iff $d\le 4$. To state it precisely, let us set some terminology first. For an affinely independent set $\sigma \subset \RR^d$ of size at most $d+1$, we will frequently refer to the geometric simplex $\conv(\sigma)$ as $\sigma$ when it is appropriate. 
We say that two simplices $\sigma_1$ and $\sigma_2$ in $\RR^d$ \textit{overlap} if the intersection of the simplices is not a face of each of them, namely
\[\conv(\sigma_1) \cap \conv(\sigma_2) \supsetneq \conv (\sigma_1\cap \sigma_2).\]
In this definition, non-overlapping simplices may intersect, but only in a common face. The non-strict containment above 
always holds, and when it holds with equality for every pair of simplices of a given simplicial complex we call it a geometric embedding of the complex. 

For convenience in the proof we use a different notation for dimension in our main result:
    \begin{thm} \label{thm_main}
(i) For every $D\le 4$ and every finite collection $\FF$ of pairwise non-overlapping simplices in $\RR^D$ on $A\subseteq \gamma_D$, $\FF$ can be extended into a triangulation $T$ of the cyclic polytope $\conv(A)$ such that $A$ is exactly the vertex set of $T$. 

(ii) For every $D\ge 5$ and $n \geq D+3$, there exists a collection of pairwise non-overlapping $D$-simplices in $\RR^D$ exactly on an $n$-vertex set $A\subseteq \gamma_D$  which cannot be extended into a triangulation of $\conv(A)$ without adding new vertices.
    \end{thm}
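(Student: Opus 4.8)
The plan is to treat parts (i) and (ii) separately, with (i) being the substantial one.

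\emph{Reduction of (i) to a filling lemma.} As explained above, and using Proposition~\ref{prop_overlap} together with Gale evenness, whether a $D$-simplex $\sigma$ on $A$ belongs to a putative triangulation, whether two simplices overlap, and which $(D-1)$-faces are boundary faces are all governed by combinatorial interlacing conditions. In particular, a finite non-overlapping family $\FF$ of $D$-simplices on $A\subseteq\gamma_D$ is (the facet set of) a triangulation of $\conv(A)$ if and only if $|\FF|=\conv(A)$ --- ``non-overlapping plus covering equals triangulation'', using that $\partial\conv(A)$ is triangulated by its Gale-even facets. So part (i) reduces to the following \emph{filling lemma}: if $\FF$ is non-overlapping and $|\FF|\neq\conv(A)$, then some $D$-simplex $\tau$ with vertices in $A$ has interior contained in $\conv(A)\setminus|\FF|$; for then $\FF\cup\{\tau\}$ is again non-overlapping, and, as there are finitely many $D$-simplices on $A$, iterating reaches a non-overlapping family with union $\conv(A)$, i.e.\ a triangulation containing $\FF$, whose vertex set is automatically $A$ since all points of $A$ are vertices of the cyclic polytope $\conv(A)$. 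Write $R$ for the closure of $\conv(A)\setminus|\FF|$: a nonempty $D$-dimensional region whose boundary faces are each either a Gale-even facet of $\conv(A)$ or a $(D-1)$-face of a simplex of $\FF$, so that every vertex on $\partial R$ lies in $A$, hence on $\gamma_D$ in convex (indeed ``cyclic'') position.

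\emph{The filling lemma for $D\le 4$.} For $D\le 2$ this is the classical fact that a polygon with polygonal holes triangulates using only its own vertices. For $D=3$ I would show $\partial R$ always has an ``ear'': take the vertex $p_a\in A\cap\partial R$ of smallest parameter, examine the $2$-faces of $R$ incident to $p_a$, and use strict convexity of $\gamma_3$ together with the interlacing criterion to extract $b<c<d$ with $\{p_a,p_b,p_c,p_d\}$ spanning a tetrahedron whose interior lies in $\operatorname{int}(R)$. This is exactly where Sch\"onhardt-type obstructions are excluded: the ``twist'' that makes Sch\"onhardt's prism non-triangulable corresponds to an interlacing pattern that cannot be realized among simplices on the moment curve. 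For $D=4$ I would reduce to the case $D=3$ via the recursive structure of cyclic polytopes: deleting the last coordinate sends $\gamma_4$ to $\gamma_3$ and $\conv(A)$ onto $C(n,3)$, each $4$-simplex is the union of its lower and upper facets over this projection, and using this one cuts the search for a $4$-simplex inside $R$ down to a $3$-dimensional instance of the filling lemma (on a suitable ``slab'' or vertex figure) whose boundary data is again of moment-curve type. I expect the main obstacle to be precisely this $D=4$ reduction: one must verify that the intermediate $3$-dimensional regions still satisfy the structural hypothesis (boundary faces are moment-curve simplices or cyclic-polytope facets) so that the $D=3$ case applies, and it is here that the cut-off at dimension $4$ should appear, since in dimension $5$ there is no analogous room --- consistent with part (ii).

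\emph{Part (ii).} Here the plan is an explicit construction. For $D\ge 5$ and $n=D+3$, build a non-overlapping family $\FF$ of $D$-simplices on $A=\{p_1,\dots,p_{D+3}\}\subseteq\gamma_D$ whose complementary region $R=\overline{\conv(A)\setminus|\FF|}$ is a single full-dimensional ``twisted'' cell, and check --- applying Proposition~\ref{prop_overlap} to the finitely many $(D+1)$-subsets of $A$ --- that every $D$-simplex with vertices in $A$ either already lies in $|\FF|$ or overlaps a member of $\FF$; by the reduction in the first paragraph this certifies that $\FF$ extends to no triangulation. The gadget should imitate Sch\"onhardt's polyhedron, the extra ``width'' needed for the obstruction being available only once $D\ge 5$ and $n\ge D+3$, which is why the thresholds in the statement are exactly these. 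For $n>D+3$ one places the remaining vertices far out along $\gamma_D$ and stacks standard simplices onto the $(D+3)$-vertex gadget, leaving its obstruction intact. The delicate point in (ii) is making the non-extendability rigorous: certifying via the interlacing criterion that \emph{every} candidate $D$-simplex is blocked, rather than merely observing that a greedy filling gets stuck.
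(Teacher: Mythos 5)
Your proposal is a plan rather than a proof, and the plan has genuine gaps at exactly the points where the theorem is hard. For part (i), everything rests on your ``filling lemma'': that a non-overlapping, non-covering family can always be augmented by a $D$-simplex on $A$. This lemma is essentially equivalent to the theorem itself (the paper's Proposition~\ref{prop:greedy_algorithm} shows a greedy filling works, but its correctness is \emph{deduced from} Theorem~\ref{thm_main}(i), not the other way around), so you must prove it from scratch; your $D=3$ ``ear at the minimal vertex'' argument is only a sketch, and for $D=4$ you explicitly concede that the reduction to $D=3$ is ``the main obstacle'' without resolving it. A second, independent gap: your framework (``non-overlapping plus covering equals triangulation'', region $R=\overline{\conv(A)\setminus|\FF|}$) only makes sense when $\FF$ consists of full-dimensional simplices, whereas the theorem allows arbitrary simplices --- and the genuinely difficult case, which drives the algebraic application, is a family of $2$-simplices in $\RR^4$, for which $R$ is all of $\conv(A)$ minus a measure-zero set and the filling lemma says nothing about avoiding $(D+2)$-interlacing with the given low-dimensional faces. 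For part (ii) you describe the shape of a counterexample but never produce one; the lifting steps ($n\to n+1$ by stacking, $D\to D+1$) are plausible and match the paper's Propositions~\ref{prop_countereg_extension_n} and~\ref{prop_countereg_extension_D}, but the base gadget, which the paper takes from Rambau's explicit triple $\{1,\dots,6\},\{3,\dots,8\},\{1,2,3,6,7,8\}$ in $C(8,5)$ and verifies by a short interlacing argument, is missing.

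For comparison, the paper's route to (i) is entirely different and global rather than greedy: it reduces $\FF$ to $\lceil D/2\rceil$-dimensional faces, projects one dimension down to $\gamma_{D-1}$, builds for each projected simplex $\sigma_i$ a triangulation $T_i\in S(n,D-1)$ containing $\sigma_i$ and lying above the earlier ones (Theorems~\ref{thm_level_d=2} and~\ref{thm_level_d=3}, the latter being the technical heart of the paper), takes meets $S_k=T_k\wedge\cdots\wedge T_m$ using the lattice property of $\HST(n,2)$ and $\HST(n,3)$ (Theorem~\ref{lemma_lattice}), and assembles the resulting chain into a triangulation of $C(n,D)$ via Rambau's Theorem~\ref{lemma_maximal_chain}. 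If you want to pursue your local approach, the place to start is to attempt the filling lemma for three pairwise non-overlapping triangles in $\RR^4$; you will find that the ``where does dimension $4$ end and dimension $5$ begin'' question is answered in the paper not by room in a slab but by the failure of the lattice/submersion-set property of $\HST(n,d)$ beyond $d=3$.
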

The additional vertices required to extend the collection on $A$ given by Theorem \ref{thm_main}(ii) into a triangulation of $\conv(A)$ are not on $\gamma_D$ as they should be in $\conv(A)$.

In order to prove Theorem~\ref{thm_main}(i) we will use the known facts that for $d=2,3$ the higher Stasheff-Tamari poset $\HST(n,d)$ is a lattice; see~\cite{HUANG19727} and~\cite[Thm.3.6]{edelman_cyclic_triangulation_envelope} respectively and Theorem~\ref{lemma_lattice}.  In order to prove Theorem~\ref{thm_main}(ii) we start with an example of Rambau~\cite{rambau} with $D=5$ and $n=8$, and then lift it to all $D\geq 5$ and $n\ge D+3$.
For $D\ge 6$ even and $n=D+3$, different 
non-extendable 
examples where  
constructed by Oppermann and Thomas~\cite{opperman_thomas}. 

\smallskip

Theorem \ref{thm_main}(i) has an algebraic implication. In the seminal paper by Oppermann and Thomas \cite{opperman_thomas}, they investigated connections between $A_n^\delta$, the $(\delta-1)$-st higher Auslander algebras of linearly oriented $A_n$ introduced by Iyama in the context of higher Auslander-Reiten theory (see \cite{iyama, iyama_oppermann}), and triangulations of cyclic $2\delta$-polytopes. 
For $\mathcal{O}_{A_n^\delta}$, a cluster category introduced in~\cite{opperman_thomas} for $A_n^\delta$,
they showed:
\begin{thm}[{\cite[Theorem 1.2]{opperman_thomas}}]\label{thm_oppermann_thomas}
    There is a bijection between internal $\delta$-simplices of $C(n+2\delta+1, 2\delta)$ and indecomposable objects in $\mathcal{O}_{A_n^\delta}$. It induces a bijection between triangulations of $C(n+2\delta+1, 2\delta)$ and basic cluster tilting objects in $\mathcal{O}_{A_n^\delta}$.
\end{thm}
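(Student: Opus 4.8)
The plan is to funnel both sides of the asserted bijection through a single combinatorial index set and then check that the algebraic notion of Ext-compatibility matches the geometric notion of non-overlap. Throughout write $m=n+2\delta+1$, identify a $\delta$-simplex of $C(m,2\delta)$ with its vertex set $\{a_0<\dots<a_\delta\}\subseteq[m]$, and recall that such a simplex is \emph{internal} precisely when it is ``separated and interior'', i.e.\ $2\le a_0$, $a_\delta\le m-1$ and $a_{i+1}-a_i\ge 2$ for all $i$; this is the Gale/interlacing picture referred to in the introduction, and the same interlacing combinatorics governs overlap by Proposition~\ref{prop_overlap}. The first step is to pin down a concrete model of $\mathcal{O}_{A_n^\delta}$: since $A_n^\delta$ is a $\delta$-representation-finite algebra of linearly oriented type $A$, Iyama's higher Auslander--Reiten theory describes its $\delta$-cluster tilting subcategory $\mathcal{U}_\delta\subseteq\mathrm{D}^b(\mathrm{mod}\,A_n^\delta)$ explicitly, and $\mathcal{O}_{A_n^\delta}$ is the orbit category of $\mathcal{U}_\delta$ under the appropriate autoequivalence built from the Serre functor and the shift (which is $\tau^{-1}[1]$ when $\delta=1$), a $2\delta$-Calabi--Yau, $(\delta+2)$-angulated category generalizing the ordinary cluster category of $A_n$. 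Every indecomposable object of $\mathcal{O}_{A_n^\delta}$ is then the orbit of a shifted ``interval module'' over $A_n^\delta$, and a direct count shows there are exactly as many of them as there are internal $\delta$-simplices of $C(m,2\delta)$.

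Second, define the object bijection $\phi$ by reading off the support data of such an interval module --- the indices at which the associated monotone lattice path jumps, together with the homological degree of the shift --- and recording it as a separated interior $(\delta+1)$-subset of $[m]$; bijectivity follows from the count above together with the (transparent) injectivity of the recipe. The heart of the argument is then to prove that $\phi$ is compatibility-preserving: for indecomposables $X,Y$ of $\mathcal{O}_{A_n^\delta}$ the object $X\oplus Y$ is rigid, meaning $\operatorname{Ext}^k_{\mathcal{O}_{A_n^\delta}}(X,Y)=0$ for $1\le k\le\delta-1$ (a condition made symmetric in $X,Y$ by Calabi--Yau duality), if and only if the internal simplices $\phi(X)$ and $\phi(Y)$ do not overlap. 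This is established by writing down the standard ``higher zigzag'' projective resolutions of interval modules over $A_n^\delta$, computing all the intermediate $\operatorname{Ext}$-groups, and matching the surviving nonzero classes against the interlacing patterns of the two $(\delta+1)$-subsets. Making these homological computations go through uniformly in $\delta$, and correctly translating ``some middle $\operatorname{Ext}$ is nonzero'' into ``the two subsets interlace'', is the main obstacle; for $\delta=1$ it reduces to the classical fact that two diagonals of a polygon are Ext-compatible in the $A_n$ cluster category exactly when they do not cross, and the work is to find the right higher-dimensional bookkeeping.

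Finally, upgrade $\phi$ from objects to the tilting/triangulation statement. Given a triangulation $T$ of $C(m,2\delta)$, its set of interior $\delta$-faces is --- by the characterization of triangulations of cyclic polytopes through forbidden interlacings --- a maximal family of pairwise non-overlapping internal $\delta$-simplices, so applying $\phi^{-1}$ summand-wise yields a basic rigid object $M_T$; one then checks that $\operatorname{add} M_T$ is a $\delta$-cluster tilting subcategory of $\mathcal{O}_{A_n^\delta}$ --- i.e.\ $M_T$ is \emph{cluster tilting}, not merely maximal rigid --- using that the $2\delta$-simplices of $T$ genuinely cover $\conv(A)$, which supplies the required approximation $(\delta+2)$-angles for every object. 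Conversely, a basic cluster tilting object $M$ is in particular maximal rigid, so $\phi$ carries its indecomposable summands to a maximal family $S$ of pairwise non-overlapping internal $\delta$-simplices; were $S$ not the interior $\delta$-face set of a triangulation, the $2\delta$-simplices compatible with all of $S$ would fail to cover $\conv(A)$, and the resulting missing approximations would contradict that $\operatorname{add} M$ is $\delta$-cluster tilting. Hence $S$ is exactly the interior $\delta$-face set of a triangulation $T$ --- unique, since a triangulation of $C(m,2\delta)$ is recovered from its interior $\delta$-faces --- with $M=M_T$, and the two assignments are mutually inverse. It is worth noting that this last direction really uses \emph{cluster tilting} rather than mere maximal rigidity; the gap between the two is exactly the non-extendability phenomenon studied in this paper, and for $2\delta=4$ it is closed by Theorem~\ref{thm_main}(i).
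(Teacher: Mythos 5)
This statement is not proved in the paper at all: it is quoted verbatim from Oppermann--Thomas \cite[Theorem 1.2]{opperman_thomas} and used as a black box to deduce Corollary~\ref{cor:algebraic}, so there is no in-paper argument to compare against. What you have written is a reasonable reconstruction of the \emph{strategy} of the cited paper (interval modules over $A_n^\delta$, an orbit-category model of $\mathcal{O}_{A_n^\delta}$, matching $\operatorname{Ext}$-compatibility with non-interlacing), but as a proof it is not self-contained. The two load-bearing steps are explicitly deferred: the equivalence ``$\operatorname{Ext}^k(X,Y)=0$ for $1\le k\le\delta-1$ iff $\phi(X)$ and $\phi(Y)$ do not overlap'' is exactly the technical core of Oppermann--Thomas and you acknowledge it as ``the main obstacle'' without carrying it out, and the enumeration of indecomposables (needed for your bijectivity argument) is likewise asserted rather than performed. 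In the final step, the claim that for a maximal rigid $M$ whose simplex family $S$ is not the internal $\delta$-face set of a triangulation ``the resulting missing approximations would contradict that $\operatorname{add} M$ is $\delta$-cluster tilting'' is precisely the delicate point, not a routine check: maximal families of pairwise non-overlapping internal $\delta$-simplices that are \emph{not} triangulations genuinely exist for $\delta\ge 3$ (this is the whole subject of the present paper), so no soft argument can close the gap between maximal rigid and cluster tilting, and the cluster-tilting-to-triangulation direction needs the actual approximation-theoretic input from \cite{opperman_thomas}.

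There is also a concrete error in your combinatorial setup. The internal $\delta$-simplices of $C(m,2\delta)$, $m=n+2\delta+1$, are the $(\delta+1)$-subsets $\{a_0<\cdots<a_\delta\}$ that are \emph{cyclically} separated: $a_{i+1}-a_i\ge 2$ for all $i$ \emph{and} $a_0+m-a_\delta\ge 2$ (i.e.\ not both $a_0=1$ and $a_\delta=m$). Your condition ``$2\le a_0$ and $a_\delta\le m-1$'' is strictly stronger and already fails for $\delta=1$: the diagonal $\{1,3\}$ of the $m$-gon is internal but excluded by your criterion, and your count gives $\binom{n}{2}$ rather than the correct $n(n+3)/2$ matching the indecomposables of the cluster category of $A_n$. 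Since your bijection $\phi$ is justified by ``a direct count'', this miscount would break the very first step. None of this affects the present paper, which legitimately cites the result, but the proposal as written is an outline with the hard parts missing rather than a proof.
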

In this correspondence, a family of non-overlapping (internal) $\delta$-simplices corresponds to a ``rigid object'' in $\mathcal{O}_{A_n^\delta}$ by considering its  collection of indecomposable direct summands, where each of the direct summands corresponds to a simplex in the family. In \cite[Section 8]{opperman_thomas}, they mentioned that for the classical $\delta=1$ case every maximal rigid object in $\mathcal{O}_{A_n^\delta}$ is cluster tilting (for a general version of this fact, see \cite[Theorem 2.6]{calabi-yau-category-rigid_Zhou_Zhu} and \cite[Theorem II.1.8]{calabi-yau-category-clustuer_Buan_Iyama_Reiten_Scott}), while they provided a counterexample to the analogous assertion for $\delta \geq 3$. For $\delta=2$, their computer experiments did not detect such counterexamples.
Theorem \ref{thm_main}(i) for $D=4$, combined with Theorem \ref{thm_oppermann_thomas}, fills in this gap by settling the $\delta=2$ case:
\begin{cor}\label{cor:algebraic}
    Every maximal rigid object in $\mathcal{O}_{A_n^\delta}$ is cluster tilting when $\delta=2$.
\end{cor}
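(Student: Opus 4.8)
The plan is to translate the statement through the Oppermann--Thomas dictionary of Theorem~\ref{thm_oppermann_thomas} and then invoke Theorem~\ref{thm_main}(i) with $D=4$. Fix $\delta=2$ and let $A\subseteq\gamma_4$ be the $(n+5)$-point set with $\conv(A)=C(n+5,4)=C(n+2\delta+1,2\delta)$. By Theorem~\ref{thm_oppermann_thomas} and the discussion following it, the rigid objects of $\mathcal{O}_{A_n^2}$ correspond bijectively to the families $\FF$ of pairwise non-overlapping internal $2$-simplices with vertices in $A$ (overlap being as defined in Section~\ref{sec_intro}), where direct summands of the object correspond to membership in $\FF$, and the cluster tilting objects correspond to those $\FF$ that arise as the set of internal $2$-simplices of a triangulation of $C(n+5,4)$. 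Under this dictionary a maximal rigid object becomes an inclusion-maximal such family, so I would first reduce Corollary~\ref{cor:algebraic} to the combinatorial claim: \emph{every inclusion-maximal family of pairwise non-overlapping internal $2$-simplices on $A$ is the set of internal $2$-simplices of some triangulation of $C(n+5,4)$}.

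To prove this claim, I would take such a maximal family $\FF$ and apply Theorem~\ref{thm_main}(i): as $\FF$ is a finite collection of pairwise non-overlapping simplices in $\RR^4$ with vertices in $A\subseteq\gamma_4$, that theorem (with $D=4$, and with ambient point set taken to be all of $A$) extends $\FF$ to a triangulation $T$ of $\conv(A)=C(n+5,4)$ whose vertex set is exactly $A$. Every simplex of $\FF$ is then a face of $T$, and since these simplices are internal they are internal $2$-faces of $T$; hence $\FF\subseteq\FF_T$, where $\FF_T$ denotes the set of internal $2$-simplices of $T$. Since any two faces of the triangulation $T$ intersect in a common face, $\FF_T$ is again a family of pairwise non-overlapping internal $2$-simplices with vertices in $A$, so maximality of $\FF$ forces $\FF=\FF_T$. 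Translating back through Theorem~\ref{thm_oppermann_thomas}, $\FF_T$ is the family of indecomposable summands of the basic cluster tilting object attached to $T$, so the maximal rigid object we began with is precisely that cluster tilting object, proving the corollary.

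I expect the only substantive step to be the extension of $\FF$ to a triangulation on the \emph{same} vertex set: once Theorem~\ref{thm_main}(i) for $D=4$ is granted, the rest is routine bookkeeping inside the Oppermann--Thomas correspondence. This is also exactly the step that fails for $\delta\ge 3$ --- by Theorem~\ref{thm_main}(ii) (with $D=2\delta\ge 6$), or by the explicit counterexamples of Oppermann--Thomas, an inclusion-maximal non-overlapping family of internal $\delta$-simplices need not complete to a triangulation --- which is why Corollary~\ref{cor:algebraic} is special to $\delta=2$.
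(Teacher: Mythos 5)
Your argument is exactly the one the paper intends: Corollary~\ref{cor:algebraic} is stated as an immediate consequence of combining Theorem~\ref{thm_oppermann_thomas} with Theorem~\ref{thm_main}(i) for $D=4$, and your write-up correctly fleshes out the dictionary (maximal rigid object $\leftrightarrow$ inclusion-maximal non-overlapping family of internal $2$-simplices, cluster tilting object $\leftrightarrow$ triangulation) together with the maximality argument forcing $\FF=\FF_T$. No gaps; this matches the paper's approach.
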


Thus, our main result exhibits a combinatorial approach towards a purely algebraic problem. It may be interesting to go in the other direction (in the spirit of \cite{williams_stasheff_tamari_advances_lattice}), namely to give a purely algebraic proof of Theorem \ref{thm_main}.

\medskip

\textbf{Previous studies.} The combinatorics of triangulations of the cyclic polytope $C(n,d)$ has been extensively studied. 
Edelman and Reiner \cite{edelman_cyclic_triangulation_envelope} defined two partial orders on this set of triangulations; one is given by bistellar flips and the other 
by comparing the ``heights of the liftings in $\RR^{d+1}$'' for those triangulations. These two orders define the  so-called \textit{(first and second) higher Stasheff-Tamari poset}. They showed that these two orders are equal for $d=2,3$. Since then there have been active research on this topic \cite{rambau,edelman_rambau_reiner_subdivision_lattice_countereg, snug_thomas, opperman_thomas, williams_stasheff_tamari_advances_lattice, williams2022representation}. Recently the two higher Stasheff-Tamari orders were proved to be the same for every dimension $d$ by Williams 	\cite{williams_twoordersequal}.

The combinatorial criterion for having overlapping simplices on the moment curve (see Proposition \ref{prop_overlap}) is well-known~\cite{breen1973primitive} and is one of the ways of defining the \textit{alternating oriented matroids} \cite[Section~9.4]{om_book}. It is also closely related to recent works on geometric hypergraph colorings and their representations where a similar combinatorial property was introduced and has been studied \cite{ABAB_stabbed_pseudodisk, ABA, dual_ABAB2024, complexity_ABAB}: A hypergraph $H$ on a totally ordered vertex set is $(AB)^{l/2}$\textit{-free} if there is no subsequence of vertices of length $l$ following the order such that the odd-numbered vertices are in $A\setminus B$ and the even numbered vertices are in $B\setminus A$ for some hyperedges $A$ and $B$. 
For a hypergraph $H$ consisting of $d$-simplices on $\gamma_{2d}$ with the natural order, we have an equivalence by Proposition \ref{prop_overlap} that $H$ is $(AB)^{d+1}$-free if and only if any pair of two $d$-simplices in $H$ do not overlap.    
    
\medskip

\textbf{Outline.} 
In Section~\ref{sec_prelim} we review the higher Stasheff-Tamari poset $\HST(n,d)$, which is a partial order on the triangulations of the cyclic polytope $C(n,d)$, and analogously define a partial order on simplices in $\RR^d$ by lifting to the moment curve one dimension higher.
In Section~\ref{sec_lattice} we first prove Theorem~\ref{thm_main}(i) using the fact from \cite{edelman_cyclic_triangulation_envelope}, Lemma~\ref{lemma_lattice}, that $\HST(n,d)$ is a
lattice for $d=2,3$, modulo the existence of triangulations in $\HST(n,2)$ and $\HST(n,3)$ with some special properties, Theorems \ref{thm_level_d=2} and \ref{thm_level_d=3}, that are defined from a given collection of pairwise non-overlapping simplices on the moment curve -- the proof of their existence 
is shown after that. Theorem~\ref{thm_main}(ii) is proved in Section~\ref{sec_countereg} by means of constructions. In Section~\ref{sec:final} we discuss complexity aspects of this work and end with related open problems.

\section{Ordering simplices and triangulations by height} \label{sec_prelim}
In this preliminary section, we introduce various relations and partial orders on simplices and triangulations on the moment curve defined by the height of their ``liftings'' and discuss several properties of them, old and new. 
In particular, the partial order on triangulations of $C(n,d)$ was originally introduced by Edelman and Reiner \cite{edelman_cyclic_triangulation_envelope} as the \textit{second higher Stasheff-Tamari order}, see Subsection \ref{subsec_height_triang_simplex} for details. We analogously define the ``above-below'' relation $<_{d+1}$ between simplices, and a partial order $\preceq_{d+1}$ as the reflexive and transitive closure of $<_{d+1}$. The order $\preceq_{d+1}$ is used in the proof of Theorem \ref{thm_main}(i).

\subsection{Ordering simplices by height} \label{subsec_height_order}
We first discuss properties of the moment curve $\gamma_d$ which is useful in our discussion.

Every set of at most $d+1$ distinct points on $\gamma_d$ is affinely independent and every $n$ points on $\gamma_d$ are in convex position. 
The combinatorial properties, to be discussed below, of finite point sets on $\gamma_d$ and their convex hulls 
depend solely on the order in which the points appear on $\gamma_d$ rather than the actual coordinates of the points. Hence when we are given $n$ points $\gamma_d(t_1), \gamma_d(t_2), \dots, \gamma_d(t_n)$ with $t_1<t_2<\cdots <t_n$, 
we will frequently abuse notation and refer to the point $\gamma_d(t_i)$ by its index $i$; so the whole point set is identified with $[n]:=\{1,2,\dots, n\}$. Also, we may consider \textit{the} cyclic polytope  $C(n,d)=\conv\{\gamma_d(i): i\in[n]\}$ on the vertex set $[n]$. 
Following the convention from the Introduction, for $k\leq d$ a geometric $k$-simplex $\conv(\sigma)$ on a vertex set $\sigma \subseteq \gamma_d$ of size $k+1$ will be frequently identified with the subset $\sigma$. 
Note that when we consider two simplices $\sigma$ and $\tau$ on $\gamma_d$, their sizes may differ.

\smallskip

For positive integers $k$ and $a<b$, let \[\DD(k,[a,b])=\{\{i_1<i_2<\cdots<i_{2k-1}<i_{2k}\} \subseteq [a,b] : i_{2j}-i_{2j-1}=1 \textrm{ for every $j\in [k]$}   \},\]
where $[a,b]:=\{a,a+1, \dots, b-1,b\}$.
The following facts are known (e.g. \cite[Lemma 2.3]{edelman_cyclic_triangulation_envelope}).

\begin{prop} \label{prop_gale}
	The set of the facets in the upper envelope of $C(n,d)$ is given by
	\begin{itemize}
		\item $\{\{n\}\}*\DD((d-1)/2,[n-1])$ when $d$ is odd, and
		\item $\{\{1,n\}\} * \DD(d/2-1, [2,n-1])$ when $d$ is even,
	\end{itemize}
	and the set of the facets in the lower envelope of $C(n,d)$ is given by
	\begin{itemize}
		\item $\{\{1\}\}*\DD((d-1)/2,[2,n])$ when $d$ is odd, and
		\item $\DD(d/2, [n])$ when $d$ is even.
	\end{itemize}
This in particular gives the whole list of facets of $C(n,d)$. Here, $*$ is the join operation for families of sets on disjoint ground sets; $\mathcal{A}*\mathcal{B}:=\{A\cup B: A\in \mathcal{A},\, B\in \mathcal{B}\}$.
\end{prop}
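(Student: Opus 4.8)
The plan is to obtain both the facet list and the upper/lower split from one sign computation with a Vandermonde-type determinant. Fix reals $t_1<\cdots<t_n$ and write $v_i=\gamma_d(t_i)$. Given $S=\{i_1<\cdots<i_d\}\subseteq[n]$ and $k\notin S$, whether $v_k$ lies on the hyperplane spanned by $v_{i_1},\dots,v_{i_d}$ is governed by the determinant $\ell_S(v_k)$ of the $(d+1)\times(d+1)$ matrix whose rows are $(1,v_k)$ followed by $(1,v_{i_1}),\dots,(1,v_{i_d})$. Moving the first row into its sorted position among $S\cup\{k\}$ and evaluating the resulting Vandermonde determinant gives $\ell_S(v_k)=(-1)^{b_S(k)}\prod_{a<b}(t_{j_b}-t_{j_a})$, where $j_1<\cdots<j_{d+1}$ is the sorted list $S\cup\{k\}$ and $b_S(k):=\#\{\,l\in S:\ l<k\,\}$. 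In particular $\ell_S(v_k)\neq 0$, and its sign is $(-1)^{b_S(k)}$. Hence $S$ spans a facet of $C(n,d)$ iff $b_S(k)\bmod 2$ is independent of $k\notin S$; writing $\varepsilon_S\in\{0,1\}$ for this common value, this is exactly Gale's evenness condition (equivalently: every maximal run of $S$ having an element of $[n]\setminus S$ on both sides has even length), which one may also just quote.

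To split the facets, note first that expanding $\ell_S$ along its first row shows that the coefficient of the last coordinate $x_d$ equals $(-1)^{d}\prod_{a<b}(t_{i_b}-t_{i_a})\neq 0$; so no facet is ``vertical'', every facet lies on exactly one of the two envelopes, and this already yields the final ``whole list of facets'' claim once the two cases are named. Normalize $\ell_S$ to the linear functional $\tilde\ell_S:=(-1)^{\varepsilon_S+1}\ell_S$, which is negative on $\mathrm{int}\,C(n,d)$ (since $\ell_S(v_k)$ has sign $(-1)^{\varepsilon_S}$ for every $k\notin S$); its $x_d$-coefficient then has sign $(-1)^{\varepsilon_S+d+1}$. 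Testing $\tilde\ell_S$ at $(0,\dots,0,M)$ as $M\to+\infty$ shows that $S$ is an upper facet iff this sign is positive, i.e. iff $\varepsilon_S\equiv d+1\pmod 2$, and a lower facet iff $\varepsilon_S\equiv d\pmod 2$. (One can sanity-check this against $d=2$, where $\varepsilon_S=1$ singles out the unique ``top'' edge $\{1,n\}$ of the parabola polygon.)

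It remains to translate the parity condition into the four stated formulas. The key combinatorial fact is that $\DD(k,[a,b])$ is precisely the family of $2k$-element subsets of $[a,b]$ all of whose maximal runs have even length (an even run of length $2m$ splitting uniquely into $m$ consecutive adjacent pairs). Applying the $\varepsilon_S$-computation to the smallest and the largest elements of $[n]\setminus S$ shows that the run of $S$ through $1$ (if $1\in S$) has length $\equiv\varepsilon_S\pmod 2$ and the run of $S$ through $n$ (if $n\in S$) has length $\equiv d-\varepsilon_S\pmod 2$, while Gale's condition forces every other run to be even. A short case analysis then pins down, in each of the four cases, exactly which of $1,n$ must lie in $S$ and shows that removing them leaves a set all of whose runs are even, i.e. an element of the relevant $\DD(\cdot,\cdot)$; e.g. for $d$ odd and $S$ an upper facet one gets $\varepsilon_S=0$, hence $n\in S$ (else $b_S(n)=d$ is odd) with $S\setminus\{n\}\subseteq[n-1]$ having all runs even, which is $\{\{n\}\}*\DD((d-1)/2,[n-1])$. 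I expect the only real friction to be keeping the three sign contributions — the row permutation $(-1)^{b_S(k)}$, the cofactor sign $(-1)^d$, and the orientation normalization $(-1)^{\varepsilon_S+1}$ — consistent, together with the end bookkeeping; everything else is a direct Vandermonde evaluation.
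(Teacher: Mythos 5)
Your argument is correct. The paper gives no proof of this proposition — it is quoted as a known fact from Edelman–Reiner — and your derivation is the standard one behind that citation: the Vandermonde sign $(-1)^{b_S(k)}$ yields Gale's evenness condition, the nonvanishing $x_d$-coefficient $(-1)^d\prod_{a<b}(t_{i_b}-t_{i_a})$ shows every facet is upper or lower, and the parity criterion $\varepsilon_S\equiv d+1$ versus $\varepsilon_S\equiv d\pmod 2$ correctly sorts the two envelopes (I checked all three sign contributions and the remaining three end-run cases, which work out exactly as in your sample case: $d$ odd lower forces $1\in S$ with an odd initial run, $d$ even upper forces $1,n\in S$, $d$ even lower forces all runs even). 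The identification of $\DD(k,[a,b])$ with the even-run subsets is also right, so the four formulas follow as claimed.
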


For subsets $\sigma$ and $\tau$ of $[n]$, we say that $\sigma$ and $\tau$ are \textit{$k$-interlacing} if there is a sequence $v_1< v_2< \cdots< v_k$ of elements of $\sigma \cup \tau$ which satisfies one of the following alternating conditions:
\begin{align}
&v_1 \in \sigma, v_2 \in \tau, v_3 \in \sigma \dots\textrm{ (beginning with an element of $\sigma$), or} \tag{$\sigma$}\label{interlacing_type1}\\
&v_1 \in \tau, v_2 \in \sigma, v_3 \in \tau \dots \textrm{ (beginning with an element of $\tau$)}. \tag{$\tau$}\label{interlacing_type2}
\end{align}
The following fact gives a combinatorial criterion for the overlapping condition on the moment curve and 
is well-known~\cite[Theorem]{breen1973primitive}; see also  \cite[Lemma 2.5]{lee_nevo2023colorings}).

\begin{prop}\label{prop_overlap}
Simplices $\sigma$ and $\tau$ on $\gamma_d$ overlap in $\RR^d$ if and only if 
they are $(d+2)$-interlacing.
\end{prop}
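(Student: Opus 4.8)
The plan is to translate overlap into a statement about the signed circuits of the point configuration $\{\gamma_d(i):i\in\sigma\cup\tau\}$ and then invoke the explicit (Vandermonde) description of those circuits. Write $\hat\gamma_d(t)=(1,t,t^2,\dots,t^d)\in\RR^{d+1}$, so that affine dependences among points of $\gamma_d$ correspond to linear dependences among the $\hat\gamma_d(i)$; since any $d+1$ of these vectors are linearly independent, every circuit (inclusion-minimal linearly dependent subset) has exactly $d+2$ elements, and for such a set $\{u_1<\dots<u_{d+2}\}$ the unique dependence $\sum_k\lambda_k\hat\gamma_d(u_k)=0$ has strictly alternating signs: by Cramer's rule $\lambda_k$ equals, up to a global sign and a positive scalar, $(-1)^k$ times the positive Vandermonde determinant $\prod_{j<l,\,j,l\ne k}(u_l-u_j)$. (Equivalently, these are the circuits of the alternating oriented matroid.)

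For the direction ``$(d+2)$-interlacing $\Rightarrow$ overlap'', take an interlacing sequence $v_1<\dots<v_{d+2}$, say with odd-indexed $v_k\in\sigma$ and even-indexed $v_k\in\tau$. Using the alternating-sign dependence above, move the odd-indexed terms to one side and the even-indexed terms to the other; comparing first coordinates shows the two sides carry equal total weight, so after normalizing we get a point $x$ that is a convex combination with \emph{strictly positive} coefficients of $\{\gamma_d(v_k):k\text{ odd}\}\subseteq\sigma$ and simultaneously of $\{\gamma_d(v_k):k\text{ even}\}\subseteq\tau$. Thus $x\in\conv(\sigma)\cap\conv(\tau)$ and $x$ lies in the relative interior of the face of the simplex $\conv(\sigma)$ spanned by the odd-indexed $v_k$. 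If $x\in\conv(\sigma\cap\tau)$ then, since a point in the relative interior of one face of a simplex lies in another face only if the former is contained in the latter, all odd-indexed $v_k$ lie in $\sigma\cap\tau$; symmetrically all even-indexed $v_k$ lie in $\sigma\cap\tau$; so $\sigma\cap\tau$ contains all $d+2$ of the $v_k$, impossible since $|\sigma\cap\tau|\le d+1$. Hence $x\notin\conv(\sigma\cap\tau)$, so $\sigma$ and $\tau$ overlap.

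For ``overlap $\Rightarrow$ $(d+2)$-interlacing'', pick $x\in(\conv(\sigma)\cap\conv(\tau))\setminus\conv(\sigma\cap\tau)$. By affine independence of $\sigma$, $x$ has a unique barycentric representation $x=\sum_{i\in\sigma}a_i\gamma_d(i)$; put $\sigma'=\operatorname{supp}(a)$, so $x\in\operatorname{relint}\conv(\sigma')$, and define $\tau'$ and $b$ symmetrically. The simplex-face fact used above shows that $x\notin\conv(\sigma\cap\tau)$ forces $\sigma'\not\subseteq\tau$ and $\tau'\not\subseteq\sigma$. Subtracting the two homogenized representations produces a \emph{nonzero} linear dependence $\sum_{i\in\sigma'\cup\tau'}c_i\hat\gamma_d(i)=0$ with $c_i=a_i>0$ for $i\in\sigma'\setminus\tau'$, $c_i=-b_i<0$ for $i\in\tau'\setminus\sigma'$, and $c_i=a_i-b_i$ of unrestricted sign for $i\in\sigma'\cap\tau'$. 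Now apply the conformal decomposition of a vector in the space of linear dependences: $c$ is a sum of vectors each supported on a single circuit and sign-compatible with $c$. Fix one such circuit $C$. Every element of $C$ in $\sigma'\setminus\tau'$ then gets positive sign and every element in $\tau'\setminus\sigma'$ negative sign, while elements in $\sigma'\cap\tau'$ belong to both $\sigma'$ and $\tau'$; hence, writing $C=C^+\sqcup C^-$ for its two sign classes, $C^+\subseteq\sigma'\subseteq\sigma$ and $C^-\subseteq\tau'\subseteq\tau$. By the alternating-sign description, if $C=\{u_1<\dots<u_{d+2}\}$ then $\{C^+,C^-\}=\{\{u_k:k\text{ odd}\},\{u_k:k\text{ even}\}\}$; in either case the $u_k$ witness that $\sigma$ and $\tau$ are $(d+2)$-interlacing.

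The real obstacle is the circuit extraction in the second direction: an arbitrary circuit inside $\sigma\cup\tau$ need not split its two sign classes along $\sigma$ and $\tau$, so one must take the circuit from the \emph{particular} dependence $c$ that witnesses the overlap, which is exactly what the sign-compatible (conformal) decomposition guarantees. The remaining pieces --- the Vandermonde sign computation and the simplex-face fact used twice --- are routine, although both deserve to be stated carefully.
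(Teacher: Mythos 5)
Your proof is correct. Note that the paper itself does not prove Proposition~\ref{prop_overlap}; it cites it as well known (Breen, and Lemma~2.5 of the authors' earlier paper), so there is no in-paper argument to compare against. Your route --- homogenize to $\hat\gamma_d(t)=(1,t,\dots,t^d)$, observe via Vandermonde determinants that every circuit of the configuration has exactly $d+2$ elements with strictly alternating signs, and then run the two directions through Radon-type reasoning --- is essentially the standard proof underlying those references, and it is the same structure the paper alludes to when it remarks that this criterion ``is one of the ways of defining the alternating oriented matroids.'' The two delicate points are both handled: in the forward direction you correctly rule out $x\in\conv(\sigma\cap\tau)$ by the uniqueness of barycentric coordinates in a simplex together with $|\sigma\cap\tau|\le d+1$; in the converse direction you correctly note that an arbitrary circuit in $\sigma\cup\tau$ would not do, and that one must extract a circuit conformal to the specific dependence $c$ witnessing the overlap (whose nonvanishing follows from $\sigma'\setminus\tau\ne\emptyset$). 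One could quibble that the conformal (sign-compatible) decomposition of a vector in a subspace into elementary vectors deserves a citation (Rockafellar's theorem on elementary vectors, or the oriented-matroid textbook the paper already cites), but the argument is complete as stated.
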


For $k\leq d+1$ and a simplex $\sigma=\{\gamma_d(t_1), \dots, \gamma_d(t_k)\}$ on $\gamma_d$, we define the \textit{height function} of $\sigma$, $h_\sigma: \conv(\sigma) \to \RR$,  by setting $h_\sigma(p)$, for each $p \in \conv(\sigma)$, to be the last coordinate of the point in $\conv(\hat{\sigma})$ corresponding to $p$ via projection, where $\hat{\sigma}=\{\gamma_{d+1}(t_1), \dots, \gamma_{d+1}(t_k)\}$. We call $\hat{\sigma}$ the \textit{lifting of} $\sigma$.

For simplices $\sigma$ and $\tau$ on $\gamma_d$, we want to compare their heights (or more precisely, the heights of the liftings $\hat{\sigma}$ and $\hat{\tau}$) in $\RR^{d+1}$. 
If $\sigma$ and $\tau$ do not overlap in $\RR^d$, then  $h_\sigma$ and $h_\tau$ have the same value at the common domain. Thus we only compare the heights when $\sigma$ and $\tau$ overlap in $\RR^d$, or equivalently by Proposition \ref{prop_overlap}, when they are $(d+2)$-interlacing.

First, we consider the following observation.

\begin{prop} \label{prop_interlacing_implies_strict_less_pt}
Suppose that two simplices $\sigma$ and $\tau$ on $\gamma_d$ are $(d+2)$-interlacing by
\begin{itemize}
    \item satisfying \eqref{interlacing_type1} when $d$ is even, or 

    \item satisfying \eqref{interlacing_type2} when $d$ is odd.
\end{itemize}
Then there is a point $p$ at the common domain of $h_\sigma$ and $h_\tau$ such that $h_\sigma(p)<h_\tau(p)$.
\end{prop}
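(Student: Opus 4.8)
The plan is to reduce the general case to a minimal configuration and then compute directly. First I would observe that both $h_\sigma$ and $h_\tau$ are restrictions of the lifting map $x\mapsto(x_1,\dots,x_d, \text{last coordinate determined by }\hat\sigma)$, hence each $h_\tau$ is an affine function on $\conv(\tau)$ agreeing with $t\mapsto t^{d+1}$ at the vertices; since $t^{d+1}$ is strictly convex, $h_\tau$ lies strictly below the curve $\gamma_{d+1}$ in the interior of $\conv(\tau)$, and dually $h_\sigma$ lies below it. The relation $h_\sigma(p)<h_\tau(p)$ is therefore a statement about which of the two affine sheets is lower over a suitable point $p$ in the common domain $\conv(\sigma)\cap\conv(\tau)$, and this common domain is nonempty precisely because $\sigma$ and $\tau$ overlap (Proposition \ref{prop_overlap}).

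Next I would pass to a minimal witness of the interlacing. Let $v_1<\dots<v_{d+2}$ be the alternating sequence certifying $(d+2)$-interlacing, with the parity convention from the statement (type \eqref{interlacing_type1} when $d$ is even, type \eqref{interlacing_type2} when $d$ is odd). The odd-indexed $v_i$ lie in one simplex and the even-indexed in the other; set $\sigma_0=\{v_i: i\text{ odd}\}$ and $\tau_0=\{v_i: i\text{ even}\}$, each of size $(d+2)/2$ or so — more precisely of sizes summing to $d+2$, so together they span a $(d+1)$-dimensional affine space and $\hat\sigma_0\cup\hat\tau_0$ is affinely independent in $\RR^{d+1}$, being $d+2$ points on $\gamma_{d+1}$. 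Because the height functions only increase (pointwise, on the overlap region) when we pass to sub-simplices — this is exactly the convexity of $t\mapsto t^{d+1}$ again: restricting $\sigma$ to $\sigma_0\subseteq\sigma$ raises the affine sheet, restricting $\tau$ to $\tau_0$ raises the other sheet — it suffices to prove $h_{\sigma_0}(p)<h_{\tau_0}(p)$ for some $p$, wait; I need to be careful about the direction of the inequality under restriction, so the cleaner route is: it suffices to find a point $p$ with $h_{\sigma_0}(p)<h_{\tau_0}(p)$ and then observe $h_\sigma(p)\le h_{\sigma_0}(p)<h_{\tau_0}(p)\le h_\tau(p)$, using that a larger simplex has a lower (or equal) affine height sheet over any common point. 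So I would reduce to the case where $\sigma\cup\tau$ is exactly $\{v_1,\dots,v_{d+2}\}$, a single alternating configuration of $d+2$ points.

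Finally, for this minimal configuration I would compute. The $d+2$ lifted points $\gamma_{d+1}(v_1),\dots,\gamma_{d+1}(v_{d+2})$ are in general position in $\RR^{d+1}$ and form a simplex whose boundary splits into an upper and a lower part; the facet $\hat\sigma_0$ (spanned by the odd-indexed points) and the facet $\hat\tau_0$ (the even-indexed points) are two of the $d+2$ facets of this $(d+1)$-simplex. The claim becomes: over the projection of their intersection, the $\sigma_0$-facet is the lower one. This is read off from the Gale evenness / upper-envelope description in Proposition \ref{prop_gale}: a facet of a simplex on $\gamma_{d+1}$ belongs to the lower envelope or upper envelope according to the parity pattern of the ``gaps'' in its vertex set relative to the full set, and the alternating pattern $\sigma_0=\{v_1,v_3,\dots\}$, $\tau_0=\{v_2,v_4,\dots\}$ is designed so that (with the stated parity convention on $d$) $\hat\sigma_0$ is a lower facet and $\hat\tau_0$ an upper facet of $\conv(\gamma_{d+1}(v_1),\dots,\gamma_{d+1}(v_{d+2}))$ — or at least that $\hat\sigma_0$ lies weakly below $\hat\tau_0$ over their common shadow, with strictness on the relative interior. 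Concretely I would take $p$ to be (the projection of) a point in the relative interior of the segment joining the barycenters of $\hat\sigma_0$ and $\hat\tau_0$, or simpler still a point in the relative interior of $\conv(\sigma_0)\cap\conv(\tau_0)$, and verify the sign of $h_{\tau_0}(p)-h_{\sigma_0}(p)$ via a determinant/Vandermonde computation whose sign is governed by $\prod_{i<j}(v_j-v_i)$ together with the alternation.

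The main obstacle I expect is bookkeeping the parity: getting the dependence on whether $d$ is even or odd to line up correctly with ``$\sigma$ goes first'' versus ``$\tau$ goes first'', and with the upper-versus-lower envelope dichotomy of Proposition \ref{prop_gale}. The geometric content (two affine sheets, convexity, restriction lowers a sheet) is routine; the delicate part is checking that the specific alternating pattern in the hypothesis forces $h_\sigma<h_\tau$ rather than the reverse, i.e. pinning down the sign of the relevant Vandermonde-type determinant. I would handle this by treating the smallest cases $d=2$ and $d=3$ explicitly to fix the sign convention, then propagate by the restriction argument and an induction (or direct parity computation) to general $d$.
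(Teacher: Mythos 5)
Your outline is essentially the paper's proof: pass to the minimal alternating witness $\sigma_0=\{v_i: i \text{ odd}\}$, $\tau_0=\{v_i: i \text{ even}\}$, lift to $d+2$ points on $\gamma_{d+1}$ forming a $(d+1)$-simplex $P$, use Proposition~\ref{prop_gale} to decide which of the two sheets is lower, and evaluate at the (unique Radon) point of $\conv(\sigma_0)\cap\conv(\tau_0)$. Two corrections. First, $\hat\sigma_0$ and $\hat\tau_0$ are \emph{not} facets of $P$ (a facet of $P$ has $d+1$ vertices; these are complementary faces of dimensions summing to $d$), so you cannot read their envelope membership directly off the facet list: you must check which facets of $P$ \emph{contain} them. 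Gale evenness shows that $\hat\sigma_0$ lies only in lower facets and $\hat\tau_0$ only in upper facets (with the stated parity convention), while every proper face of either lies in both envelopes. Second, once this is done no Vandermonde or sign computation is needed, nor any induction on $d$: the point $p$ lies in the relative interiors of both $\conv(\sigma_0)$ and $\conv(\tau_0)$, so the point of $\conv(\hat\sigma_0)$ over $p$ is in the relative interior of a face lying only on the lower envelope and the point of $\conv(\hat\tau_0)$ over $p$ only on the upper envelope, whence $h_{\sigma_0}(p)<h_{\tau_0}(p)$ strictly. Finally, your worry about the direction of the inequality under restriction is moot: since $\hat\sigma_0$ is a face of the simplex $\hat\sigma$ and the projection is injective on $\conv(\hat\sigma)$, one has $h_\sigma=h_{\sigma_0}$ on all of $\conv(\sigma_0)$, with equality rather than inequality.
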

\begin{proof}
 When $d$ is even, we can find subsets $\sigma_0\subseteq \sigma$ and $\tau_0 \subseteq \tau$ of size $d/2+1$ which are $(d+2)$-interlacing with an element of $\sigma_0$ comes first. Let $\hat\sigma_0$ and $\hat\tau_0$ be the liftings of $\sigma_0$ and $\tau_0$ on $\gamma_{d+1}$. By Proposition \ref{prop_gale}, in the cyclic polytope $P$ on the vertex set $\hat\sigma_0 \cup \hat\tau_0$, a full simplex in $\mathbb{R}^{d+1}$, we can easily see
that
\begin{enumerate}[(a)]
	\item $\hat\sigma_0$ is at the lower envelope but not at the upper envelope of $P$,
	
	\item $\hat\tau_0$ is at the upper envelope but not at the lower envelope of $P$, and
	
	\item every proper face of $\hat\sigma_0$ or $\hat\tau_0$ belongs to both the lower and upper envelope of $P$.
\end{enumerate}
By the general position property of $\gamma_d$, there is a unique point $p$ at the intersection $\conv(\sigma_0)\cap \conv(\tau_0)$ and $p$ lies at the interior of both $\conv(\sigma_0)$ and $\conv(\tau_0)$. Therefore, (a), (b) and (c) imply that  $h_{\sigma}(p)=h_{\sigma_0}(p)<h_{\tau_0}(p)=h_{\tau}(p)$.

When $d$ is odd, we can similarly argue after finding subsets $\sigma_0\subseteq \sigma$ and $\tau_0 \subseteq \tau$ of size $(d+1)/2$ and $(d+1)/2+1$ respectively, which are $(d+2)$-interlacing. These also satisfy Conditions (a), (b) and (c) by Proposition \ref{prop_gale}, so the desired conclusion holds similarly.
\end{proof}

We now define a relation $<_{d+1}$ on the set of simplices on $\gamma_d$. For two simplices $\sigma, \tau \subseteq \gamma_d$, we have $\sigma <_{d+1} \tau$ if and only if $\sigma$ and $\tau$ overlap in $\RR^d$ and $h_\sigma \leq h_\tau$ in the common domain. The following combinatorial criterion for $<_{d+1}$ will be useful.

\begin{prop} \label{prop_height_interlacing}
For simplices $\sigma$ and $\tau$ on $\gamma_d$, $\sigma <_{d+1} \tau$ if and only if $\sigma$ and $\tau$ are $(d+2)$-interlacing by
\begin{itemize}
    \item  satisfying \eqref{interlacing_type1} but not \eqref{interlacing_type2} when $d$ is even, and

    \item satisfying \eqref{interlacing_type2} but not \eqref{interlacing_type1} when $d$ is odd.
\end{itemize}
\end{prop}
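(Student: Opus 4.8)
The plan is to combine Proposition~\ref{prop_overlap} (which translates ``overlap'' into ``$(d+2)$-interlacing'') with Proposition~\ref{prop_interlacing_implies_strict_less_pt} (which pins down, for each \emph{oriented} interlacing pattern, the direction of the height inequality at a witnessing point). First I would observe that by the very definition of $<_{d+1}$, the relation $\sigma<_{d+1}\tau$ requires $\sigma$ and $\tau$ to overlap in $\RR^d$, hence by Proposition~\ref{prop_overlap} to be $(d+2)$-interlacing; so throughout we may assume $\sigma,\tau$ are $(d+2)$-interlacing and only need to determine which of the two alternating conditions \eqref{interlacing_type1}, \eqref{interlacing_type2} can hold and how this relates to the sign of $h_\sigma-h_\tau$. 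A small but essential combinatorial lemma to record here is that a pair of sets can be $(d+2)$-interlacing ``starting with $\sigma$'' and also ``starting with $\tau$'' simultaneously, or starting with only one of them, but never with neither; and that whenever both hold one can in fact find, on suitable subsets, oriented interlacing patterns of the length specified in the proof of Proposition~\ref{prop_interlacing_implies_strict_less_pt} in \emph{both} orientations.

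The core of the argument is then a case analysis on the parity of $d$, using Proposition~\ref{prop_interlacing_implies_strict_less_pt} as a black box. Take $d$ even for concreteness. If $\sigma,\tau$ are $(d+2)$-interlacing satisfying \eqref{interlacing_type1} but not \eqref{interlacing_type2}, I want to conclude $h_\sigma\le h_\tau$ everywhere on the common domain; conversely if $h_\sigma\le h_\tau$ I want \eqref{interlacing_type1} to hold and \eqref{interlacing_type2} to fail. For the forward direction: since they overlap, on the common domain the two height functions are not identically equal, so there is some point where one is strictly below the other; Proposition~\ref{prop_interlacing_implies_strict_less_pt} applied to the orientation \eqref{interlacing_type1} (which holds) gives a point $p$ with $h_\sigma(p)<h_\tau(p)$; and if \eqref{interlacing_type2} failed, the same proposition cannot produce a point with $h_\tau<h_\sigma$ — here I would argue that the convex piecewise-linear structure of $h_\sigma$ and $h_\tau$ (they are lower faces of the lifted simplices, lower envelopes of the respective lifted vertex sets) forces the sign of $h_\sigma-h_\tau$ to be constant on the interior of the common domain, so one strict inequality in one direction and no strict inequality in the other yields $h_\sigma\le h_\tau$ globally. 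For the converse, assume $h_\sigma\le h_\tau$: if \eqref{interlacing_type1} failed then $\sigma,\tau$ would be $(d+2)$-interlacing satisfying \eqref{interlacing_type2}, and Proposition~\ref{prop_interlacing_implies_strict_less_pt} (with the roles of the two conditions swapped for even $d$) would give a point with $h_\tau<h_\sigma$, a contradiction; and \eqref{interlacing_type2} must fail, since if it also held then by the combinatorial lemma above we could extract oriented sub-patterns in both directions and Proposition~\ref{prop_interlacing_implies_strict_less_pt} would yield points with $h_\sigma<h_\tau$ \emph{and} $h_\tau<h_\sigma$, contradicting that the sign of $h_\sigma-h_\tau$ is constant. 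The odd $d$ case is identical after interchanging \eqref{interlacing_type1} and \eqref{interlacing_type2}, matching the statement.

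The step I expect to be the main obstacle is the ``constant sign'' claim: rigorously justifying that $h_\sigma-h_\tau$ does not change sign on the (relative) interior of $\conv(\sigma)\cap\conv(\tau)$. The clean way is to reduce to the minimal witnessing subsets $\sigma_0\subseteq\sigma$, $\tau_0\subseteq\tau$ of the sizes used in the proof of Proposition~\ref{prop_interlacing_implies_strict_less_pt}, where $\hat\sigma_0\cup\hat\tau_0$ spans a full-dimensional simplex in $\RR^{d+1}$, the intersection $\conv(\sigma_0)\cap\conv(\tau_0)$ is a single interior point, and items (a)--(c) there already force $h_{\sigma_0}<h_{\tau_0}$ or $h_{\tau_0}<h_{\sigma_0}$ unambiguously according to which orientation the interlacing has — so that orientation \eqref{interlacing_type1} holding for \emph{some} choice of such minimal subsets already determines the sign, and it cannot simultaneously hold with orientation \eqref{interlacing_type2} on another choice unless the sign is both positive and negative. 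I would make precise that ``\eqref{interlacing_type1} holds but \eqref{interlacing_type2} does not'' for $\sigma,\tau$ is equivalent to ``every minimal oriented witness has orientation \eqref{interlacing_type1}'', tying the global height comparison to the (uniform) orientation of all minimal witnesses; the rest is then bookkeeping with Proposition~\ref{prop_interlacing_implies_strict_less_pt}.
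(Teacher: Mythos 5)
Your ``only if'' direction is fine and is essentially the paper's argument: overlap forces $(d+2)$-interlacing, and if the wrong orientation held, Proposition \ref{prop_interlacing_implies_strict_less_pt} would produce a point with $h_\sigma>h_\tau$. The gap is in the ``if'' direction, exactly at the step you flag as the main obstacle, and your proposed resolution does not close it. First, your parenthetical justification is incorrect: $h_\sigma$ and $h_\tau$ are each \emph{affine} (the lifting of a simplex is a single simplex, not a lower envelope with several facets), and a difference of affine functions can perfectly well change sign on a convex set --- this is precisely what happens in Case (D) of Corollary \ref{cor_cases}, when $\sigma$ and $\tau$ are $(d+3)$-interlacing. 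Second, your fallback via minimal witnesses $\sigma_0,\tau_0$ only controls the sign of $h_\sigma-h_\tau$ at the finitely many single points $\conv(\sigma_0)\cap\conv(\tau_0)$; it establishes ``(both orientations hold) $\Rightarrow$ (the sign changes somewhere)'', whereas what you need is the converse, ``(only one orientation holds) $\Rightarrow$ (the sign is constant on all of $D\setminus\conv(\sigma\cap\tau)$, where $D=\conv(\sigma)\cap\conv(\tau)$)''. Uniform orientation of all minimal witnesses does not by itself propagate the strict inequality from those isolated points to the whole common domain.

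The missing idea --- and the route the paper takes --- is to apply Proposition \ref{prop_overlap} one dimension up. Satisfying \eqref{interlacing_type1} but not \eqref{interlacing_type2} (say $d$ even) means $\sigma,\tau$ are $(d+2)$- but not $(d+3)$-interlacing, hence the liftings $\hat\sigma,\hat\tau$ do \emph{not} overlap in $\RR^{d+1}$, which says exactly that $h_\sigma=h_\tau$ only on $\conv(\sigma\cap\tau)$. One then checks that $\conv(\sigma\cap\tau)$ is a proper face of $D$ (via a supporting hyperplane of $\conv(\sigma)$ through $\conv(\sigma\cap\tau)$), so $D\setminus\conv(\sigma\cap\tau)$ is convex, hence connected, and the nowhere-vanishing continuous function $h_\sigma-h_\tau$ has constant sign there; Proposition \ref{prop_interlacing_implies_strict_less_pt} then determines that sign, as you say. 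Without the chain ``one orientation only $\Rightarrow$ not $(d+3)$-interlacing $\Rightarrow$ liftings do not overlap'', the constant-sign claim is simply not established, and your proof does not go through.
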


\begin{proof}
For the if implication, suppose the interlacing condition in the statement holds. Then $\sigma$ and $\tau$ are $(d+2)$-interlacing but not $(d+3)$-interlacing, thus 
\begin{itemize}
    \item[(i)] $\sigma$ and $\tau$ do overlap in $\RR^d$, but 

    \item[(ii)] the liftings $\hat\sigma$ and $\hat\tau$ do not overlap in $\RR^{d+1}$;
\end{itemize}
both by Proposition \ref{prop_overlap}. 

Condition (i) implies that for the intersection $D=\conv(\sigma)\cap \conv(\tau) \subseteq \mathbb{R}^d$, we have $D\setminus \conv(\sigma\cap \tau) \ne \emptyset$. Note that $D$ is a convex polytope in $\RR^d$. Then again (i) implies that $\sigma$ and $\tau$ are distinct, so $\sigma \cap \tau$ is a proper face of $\sigma$ or $\tau$.
WLOG, we assume $\sigma \cap \tau$ is a proper face of $\sigma$ (for $\tau$ it can be argued similarly). Let $H$ be a hyperplane in $\mathbb{R}^d$ such that $H\cap \conv(\sigma)=\conv(\sigma\cap \tau)$ and $\conv(\sigma) \subseteq H^+$ for some closed half-space $H^+$ bounded by $H$. Then $H \cap D=H \cap \conv(\sigma) \cap \conv(\tau)=\conv(\sigma\cap \tau)$ and $D\subseteq H^+$. 
This implies that $\conv(\sigma \cap \tau)$ is a proper face of $D$, so $D \setminus \conv(\sigma \cap \tau)$ is again convex.

Further, Condition (ii) implies that $h_\sigma=h_\tau$ only on (the proper face) $\conv(\sigma \cap \tau) \subseteq \RR^d$; otherwise there is a point $p \in D\setminus \conv(\sigma \cap \tau)$ with $h_\sigma(p)=h_\tau(p)$ which means $\hat \sigma$ and $\hat \tau$ have an intersection outside $\conv(\hat\sigma \cap \hat\tau)$, and this leads to a contradiction with (ii).  Since $D \setminus \conv(\sigma \cap \tau)$ is convex, we have either $h_\sigma(p)< h_\tau(p)$ for all $p \in D\setminus \conv(\sigma \cap \tau)$ or $h_\sigma(p)>h_\tau(p)$ for all $p \in D\setminus \conv(\sigma \cap \tau)$. However, only the former case is possible by the interlacing condition and Proposition \ref{prop_interlacing_implies_strict_less_pt}. This shows the if implication.

Now suppose that $\sigma <_{d+1} \tau$. Since $\sigma$ and $\tau$ overlap in $\RR^d$, they are $(d+2)$-interlacing. If the interlacing pattern is different from the one given in the assertion, then by Proposition \ref{prop_interlacing_implies_strict_less_pt}, we can find a point $p$ at the common domain such that $h_\sigma(p)>h_\tau(p)$, a contradiction. This completes the proof.
\end{proof}
 Propositions \ref{prop_overlap} and \ref{prop_height_interlacing} imply the following corollary that is useful later on, see Figure \ref{fig:exclusive_cases_overlap}. Note that we might have $\sigma=\tau$ in Case (A).

 \begin{figure}[ht]
			\centering
			\includegraphics[width=0.8\textwidth]{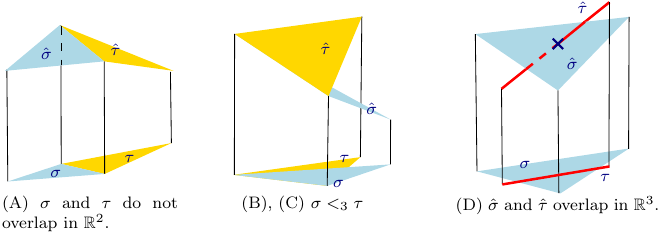}
			\caption{Illustration of the mutually exclusive cases (A)-(D).}\label{fig:exclusive_cases_overlap}
\end{figure}

 \begin{cor} \label{cor_cases}
 Given a pair of simplices $\sigma$ and $\tau$ on $\gamma_d$, one of the following four mutually exclusive cases occurs:
 \begin{enumerate}[(A)]
    \item $\sigma$ and $\tau$ do not overlap in $\RR^d$, or equivalently, $\sigma$ and $\tau$ are not $(d+2)$-interlacing.
    \item $\sigma <_{d+1} \tau$, or equivalently, $\sigma$ and $\tau$ are $(d+2)$-interlacing by satisfying the combinatorial condition of Proposition \ref{prop_height_interlacing}.
    \item $\sigma >_{d+1} \tau$, or equivalently, $\sigma$ and $\tau$ are $(d+2)$-interlacing by satisfying the combinatorial condition of Proposition \ref{prop_height_interlacing} with the roles of $\sigma$ and $\tau$ interchanged.
    \item the liftings  $\hat\sigma$ and $\hat\tau$ overlap in $\RR^{d+1}$, or equivalently, $\sigma$ and $\tau$ are $(d+3)$-interlacing.
\end{enumerate}
    \end{cor}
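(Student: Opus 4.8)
The plan is to derive Corollary~\ref{cor_cases} directly as a logical partition of the possible interlacing behaviours of $\sigma$ and $\tau$, using Propositions~\ref{prop_overlap} and~\ref{prop_height_interlacing} as black boxes. First I would observe that the pair $(\sigma,\tau)$ of simplices on $\gamma_d$ is classified by the largest $m$ for which $\sigma$ and $\tau$ are $m$-interlacing: either they are not $(d+2)$-interlacing at all, or they are $(d+2)$-interlacing but not $(d+3)$-interlacing, or they are $(d+3)$-interlacing. These three situations are mutually exclusive and exhaustive, and by Proposition~\ref{prop_overlap} the first is exactly Case (A) (non-overlap in $\RR^d$) and the third is exactly Case (D) (the liftings $\hat\sigma,\hat\tau$ overlap in $\RR^{d+1}$, since $\hat\sigma,\hat\tau$ lie on $\gamma_{d+1}$ and overlap there iff they are $((d+1)+2)=(d+3)$-interlacing).

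Next I would split the middle situation, where $\sigma$ and $\tau$ are $(d+2)$-interlacing but not $(d+3)$-interlacing. Here I need to show that exactly one of the two alternating patterns \eqref{interlacing_type1}, \eqref{interlacing_type2} of length $d+2$ can be realised: if a length-$(d+2)$ witness sequence of each type existed, one could interleave them to produce a witness of $(d+3)$-interlacing, contradicting the failure of $(d+3)$-interlacing; and at least one type occurs by definition of $(d+2)$-interlacing. (More carefully: having both a length-$(d+2)$ type-$\sigma$ sequence and a length-$(d+2)$ type-$\tau$ sequence forces a long enough alternating chain; this is the combinatorial heart and is essentially the observation already used in the proof of Proposition~\ref{prop_height_interlacing}, so I would either cite that proof or spell out the short pigeonhole-style argument. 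Alternatively, and more cleanly, I can avoid this step entirely by invoking Proposition~\ref{prop_height_interlacing} directly: it tells us that $\sigma<_{d+1}\tau$ holds iff the pattern in its statement holds, and symmetrically for $\tau<_{d+1}\sigma$; since $\sigma,\tau$ overlapping in $\RR^d$ forces $(d+2)$-interlacing, and since $<_{d+1}$ and $>_{d+1}$ cannot hold simultaneously unless $h_\sigma\equiv h_\tau$ on the common domain — which by Proposition~\ref{prop_interlacing_implies_strict_less_pt} is impossible once they are $(d+2)$-interlacing with a definite parity — exactly one of (B), (C) holds.) Thus the middle situation decomposes into Case (B) and Case (C).

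Combining, every pair $(\sigma,\tau)$ falls into exactly one of (A), (B), (C), (D), and the stated ``equivalently'' reformulations are precisely the content of Propositions~\ref{prop_overlap} and~\ref{prop_height_interlacing}; the remark that $\sigma=\tau$ can only occur in Case (A) is immediate since identical simplices never overlap. The main obstacle is the mutual exclusivity of (B) and (C) — i.e.\ ruling out that $\sigma$ and $\tau$ admit length-$(d+2)$ alternating subsequences of both parities without being $(d+3)$-interlacing — and the cleanest route is to reduce it to Proposition~\ref{prop_interlacing_implies_strict_less_pt}, which already guarantees that the two parities push the heights in opposite directions, so they cannot both give $h_\sigma\le h_\tau$; everything else is bookkeeping over the trichotomy in the maximal interlacing length.
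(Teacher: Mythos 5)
Your proposal is correct, but it establishes exhaustiveness by a different mechanism than the paper. The paper argues geometrically: assuming none of (A), (B), (C) holds, it picks points $p,q$ in the common domain with $h_\sigma(p)<h_\tau(p)$ and $h_\sigma(q)>h_\tau(q)$, uses convexity of $\bigl(\conv(\sigma)\cap\conv(\tau)\bigr)\setminus\conv(\sigma\cap\tau)$ together with an intermediate-value argument along the segment $pq$ to find a point where the heights agree outside $\conv(\hat\sigma\cap\hat\tau)$, and concludes that the liftings overlap, i.e., (D) holds. You instead run a trichotomy on the maximal interlacing length and reduce the middle case to the purely combinatorial claim that realising both alternating parities at length $d+2$ forces $(d+3)$-interlacing; the paper's remark immediately following the corollary states that such a combinatorial proof exists and leaves it to the reader, so you are essentially supplying that alternative, which has the merit of being readable at the level of interlacing patterns alone. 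Two points of care. First, the interleaving claim is not a one-line pigeonhole: the clean induction shows that if both parities occur at length $m$ with no alternating sequence of length $m+1$, the two witness sequences are forced to coincide and lie entirely in $\sigma\cap\tau$, so that $\sigma=\tau$ with $|\sigma|=m=d+2$ --- ruled out only because a simplex on $\gamma_d$ has at most $d+1$ vertices; this should be spelled out. Second, your ``cleaner alternative'' via Proposition \ref{prop_interlacing_implies_strict_less_pt} proves only that (B) and (C) are mutually exclusive; it does not show that one of them actually holds in the $(d+2)$-but-not-$(d+3)$-interlacing case. That the heights are then comparable throughout the common domain is precisely the ``if'' direction of Proposition \ref{prop_height_interlacing}, which your primary route correctly invokes, so do not discard that step.
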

\begin{proof}
    It is clear that all four cases (A)-(D) are mutually exclusive by their combinatorial 
    interpretation in terms of interlacing patterns. To show that (A)-(D) cover all cases, it is enough to show that (D) holds if neither (A), (B), nor (C) holds. Equivalently, we show that when $\sigma$ and $\tau$ on $\gamma_d$ overlap in $\mathbb{R}^d$, if there are points $p$ and $q$ in the common domain of $h_\sigma$ and $h_\tau$ such that $h_\sigma(p)<h_\tau(p)$ and $h_\sigma(q)>h_\tau(q)$, then the liftings $\hat\sigma$ and $\hat \tau$ overlap in $\mathbb{R}^{d+1}$. In fact, by again setting $D=\conv(\sigma)\cap \conv(\tau)$, we can see that $p, q \in D\setminus \conv(\sigma \cap \tau)$.
    As in the proof of Proposition \ref{prop_interlacing_implies_strict_less_pt}, $D\setminus \conv(\sigma \cap \tau)$ is convex, so along the segment between 
    $p$ and $q$ there exists a point $z \in D\setminus \conv(\sigma \cap \tau)$ such that $h_\sigma(z)=h_\tau(z)$. This is where the liftings $\hat\sigma$ and $\hat \tau$ intersect outside $\conv(\hat \sigma \cap \hat \tau)$, which means that $\hat \sigma$ and $\hat \tau$ overlap in $\mathbb{R}^{d+1}$.
\end{proof} 
\begin{remark}
As conditions (A)-(D) have combinatorial characterizations, a purely combinatorial proof of Corollary \ref{cor_cases} can be given, left to the reader.
\end{remark}
Now, we define another relation $\preceq_{d+1}$ from $<_{d+1}$: given simplices $\sigma$ and $\tau$ on $\gamma_d$, $\sigma \preceq_{d+1} \tau$ if 
\begin{itemize}
\item $\sigma=\tau$, or

\item there is a positive integer $k$ and a sequence of simplices $\sigma=\sigma_0<_{d+1} \sigma_1<_{d+1} \cdots <_{d+1} \sigma_k=\tau$  on $\gamma_d$.
\end{itemize}

\begin{lemma} \label{lemma_linear_extension}
The relation $\preceq_{d+1}$ is a partial order on the simplices on $\gamma_d$.
\end{lemma}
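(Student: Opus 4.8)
The plan is to verify the three defining axioms of a partial order for $\preceq_{d+1}$: reflexivity, antisymmetry, and transitivity. Reflexivity holds by fiat, since we built $\sigma \preceq_{d+1} \sigma$ into the definition. Transitivity is also essentially built in: if $\sigma \preceq_{d+1} \tau$ and $\tau \preceq_{d+1} \rho$, then either one of the relations is an equality (and we are done), or we may concatenate the two witnessing $<_{d+1}$-chains $\sigma = \sigma_0 <_{d+1} \cdots <_{d+1} \sigma_k = \tau$ and $\tau = \tau_0 <_{d+1} \cdots <_{d+1} \tau_\ell = \rho$ into a single chain from $\sigma$ to $\rho$, witnessing $\sigma \preceq_{d+1} \rho$. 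So the only real content is antisymmetry.

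For antisymmetry we must show that there is no nontrivial $<_{d+1}$-cycle, i.e., no sequence $\sigma = \sigma_0 <_{d+1} \sigma_1 <_{d+1} \cdots <_{d+1} \sigma_k = \sigma$ with $k \ge 1$ in which not all $\sigma_i$ are equal (and more generally that $\sigma \preceq_{d+1}\tau$ and $\tau \preceq_{d+1} \sigma$ force $\sigma = \tau$). The natural strategy is to exhibit a real-valued quantity that strictly increases along every $<_{d+1}$-step, which immediately precludes cycles. The key point is that $\sigma <_{d+1} \tau$ means $\sigma$ and $\tau$ overlap in $\RR^d$ and $h_\sigma \le h_\tau$ on their common domain $D = \conv(\sigma)\cap\conv(\tau)$; moreover, by the analysis in the proof of Proposition \ref{prop_height_interlacing}, the inequality is strict on the (nonempty, relatively open) set $D \setminus \conv(\sigma\cap\tau)$. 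So the liftings $\hat\sigma$, $\hat\tau$ genuinely lie one strictly below the other over an open region. I would integrate the height function over a fixed reference region: fix a large simplex $\Xi$ on $\gamma_d$ containing all vertices under consideration (e.g. take $A = [n]$ and $\Xi$ a $d$-simplex whose vertices include $1$ and $n$, so that $\conv(\sigma)\subseteq \conv(\Xi)$ for every relevant $\sigma$), and for any simplex $\sigma$ define a monotone functional such as $\Phi(\sigma) = \int_{\conv(\sigma)} (h_\sigma(x) - h_\Xi(x))\, dx$, or more simply compare via a common refinement. The cleanest route: given a $<_{d+1}$-step $\sigma <_{d+1}\tau$, the union $\conv(\sigma)\cup\conv(\tau)$ is a region over which the ``upper'' height function (pointwise max of $h_\sigma, h_\tau$, suitably extended by the shared lifted boundary) is weakly larger for $\tau$ than for $\sigma$ and strictly larger on a set of positive measure; tracking the integral of the pointwise-maximal lifting height over the whole ambient cyclic polytope gives a strictly increasing invariant.

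The step I expect to be the main obstacle is making the ``monotone functional'' argument rigorous despite the fact that consecutive simplices $\sigma_i$ in a chain need not be nested, need not have comparable supports, and the domains $\conv(\sigma_i)$ vary. One has to be careful that $\Phi$ is well-defined and that a single $<_{d+1}$-step really does increase it: the increase happens only on $D\setminus\conv(\sigma\cap\tau)$, which has positive $d$-dimensional measure precisely because $\sigma,\tau$ overlap in $\RR^d$ (so $D \supsetneq \conv(\sigma\cap\tau)$ and, $D$ being a full-dimensional-in-$\aff$ region when $\sigma\cap\tau$ is a proper face, the difference is a genuine open piece) — this is exactly what Proposition \ref{prop_height_interlacing} and Corollary \ref{cor_cases} give us. An alternative, perhaps lighter-weight finish that sidesteps integration entirely: argue combinatorially that along a $<_{d+1}$-chain one cannot return to the start, by choosing a single point $p$ in general position inside $\conv(\Xi)$ and noting that whenever $p$ lies in the common domain of a $<_{d+1}$-step the height at $p$ weakly increases, but this alone is not enough since $p$ may leave the domain; hence the integral (over all such $p$) version is the robust one. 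I would present the integral argument, invoking Corollary \ref{cor_cases} to guarantee the strict gap and Proposition \ref{prop_height_interlacing} for the sign, and then note transitivity and reflexivity are immediate from the definition.
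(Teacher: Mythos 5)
Your reflexivity and transitivity arguments are fine and agree with the paper, but the antisymmetry argument --- the only part with real content --- has a gap that I do not think can be repaired along the lines you sketch. First, the claim that along a step $\sigma <_{d+1}\tau$ the height strictly increases ``on a set of positive measure'' is false in general: the strict inequality holds only on $D\setminus\conv(\sigma\cap\tau)$ with $D=\conv(\sigma)\cap\conv(\tau)$, and chains genuinely mix simplices of different dimensions (see Example~\ref{eg:height_order_overlap}), so $D$ can be a single point --- e.g.\ two crossing edges $\{1,3\}<_3\{2,4\}$ on $\gamma_2$ --- which is null for every candidate reference measure. Second, and more fundamentally, the comparison $h_\sigma\le h_\tau$ is only valid on the common domain, and outside it there is no control whatsoever; consequently no functional of the form $\int_{\conv(\sigma)}(h_\sigma-h_\Xi)\,dx$ is monotone along a $<_{d+1}$-step. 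Concretely, on $\gamma_2$ take $\sigma=\{\gamma_2(0),\gamma_2(100)\}$ and $\tau=\{\gamma_2(99),\gamma_2(101)\}$: then $\sigma<_3\tau$, yet $\int_{\conv(\sigma)}h_\sigma$ vastly exceeds $\int_{\conv(\tau)}h_\tau$ because $\conv(\tau)$ is tiny. Your fallback, the running pointwise maximum of the lifted heights, is indeed non-decreasing but yields no contradiction with $\sigma_k=\sigma_0$, because $\sigma_{i-1}<_{d+1}\sigma_i$ does not imply that $h_{\sigma_i}$ dominates the earlier envelope --- Example~\ref{eg:height_order_overlap} exhibits $\sigma<_3 e<_3\tau$ with $h_\sigma>h_\tau$ somewhere on $\conv(\sigma)\cap\conv(\tau)$, so the relation is not even ``pointwise transitive''. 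Any argument that uses only the consecutive height comparisons in this analytic way therefore cannot close a cycle.

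The paper avoids all of this by arguing combinatorially, by induction on the dimension $d$: using the interlacing characterization of $<_{d+1}$ (Proposition~\ref{prop_height_interlacing}), one shows that along a putative cycle the intervals $[\min(\sigma_i),\max(\sigma_i)]$ move weakly monotonically, hence are all equal; one then deletes the two extreme vertices of each $\sigma_i$ and obtains a reversed cycle for $<_{d-1}$ on $\gamma_{d-2}$, contradicting the induction hypothesis (the base cases $d=1,2$ being checked directly via the same interval/middle-vertex monotonicity). If you want to keep a ``strictly monotone invariant'' flavour, the invariant has to be this combinatorial one (the endpoints, and then recursively the inner vertices), not an integral of heights.
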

\begin{proof}
That $\preceq_{d+1}$ is 
reflexive and transitive is clear from the definition. We show it is also   
antisymmetric. 
For this, it is enough to show that there is no sequence
$\sigma_0<_{d+1} \sigma_1<_{d+1} \cdots <_{d+1} \sigma_{k-1}<_{d+1}\sigma_k=\sigma_0$ 
for some positive integer $k$ where $\sigma_0, \dots, \sigma_{k-1}$ are all distinct. 

Suppose for contradiction that such a sequence exists for some $k\geq 1$. 
For every $i\in [k]\cup \{0\}$ define the interval $J_i=[\min(\sigma_i), \max(\sigma_i)]$. 
Now we use induction on the dimension $d$. 
For $d=1$, by the condition $\sigma_{i-1}<_{d+1}\sigma_i$ for every $i \in [k]$ and since each $\sigma_i$ is a point or an edge, $J_{i-1} \subsetneq J_{i}$ by Proposition \ref{prop_height_interlacing}. This leads to a contradiction.

For $d=2$, again the condition $\sigma_{i-1}<_{d+1}\sigma_i$ for every $i \in [k]$ implies that $\min(\sigma_{i-1}) \leq \min(\sigma_i)$ and $\max(\sigma_{i-1}) \leq \max(\sigma_i)$ for every $i\in [k]$ by Proposition \ref{prop_height_interlacing}. To have $\sigma_0=\sigma_k$, we must have $J_i=J_{i'}$ for all $i,i'\in[k]$. In particular, all $\sigma_i$ must be triangles, otherwise they do not have the desired interlacing pattern. Let $m_i$ be the remaining point in the singleton $\sigma_i\setminus \{\min(\sigma_i), \max(\sigma_i)\}$. By Proposition \ref{prop_height_interlacing} again, we have $m_{i-1}>m_i$ for every $i\in [k]$; a contradiction, which proves the base case.

Now suppose $d>2$. By Proposition \ref{prop_height_interlacing} and a similar reasoning, we have that when $d$ is even $\min(\sigma_{i-1}) \leq \min(\sigma_i)$ and $\max(\sigma_{i-1}) \leq \max(\sigma_i)$ for every $i\in [k]$, 
and when $d$ is odd
$\min(\sigma_i) \leq \min(\sigma_{i-1})\leq \max(\sigma_{i-1}) \leq \max(\sigma_i)$ for every $i\in [k]$. By the assumption $\sigma_0=\sigma_k$, this implies $J_i=J_{i'}$ for every $i,i'\in [k]$. Let $\sigma_i'=\sigma_i\setminus\{\max(\sigma_i), \min(\sigma_i)\}$. The $\sigma_i'$ are non-empty as $\sigma_i'$ and $\sigma_{i-1}'$ are $d$-interlacing and $d\geq 3$. 
We now consider $\sigma_i'$ as a simplex on $\gamma_{d-2}$ via projection. By Proposition \ref{prop_height_interlacing} again, we have
$\sigma_k'<_{d-1} \sigma_{k-1}'<_{d-1} \cdots <_{d-1} \sigma_1'<_{d-1}\sigma_0'$, which gives a contradiction by the induction hypothesis.
\end{proof}

In particular, we never have $\tau <_{d+1} \sigma$ when $\sigma \preceq_{d+1} \tau$.
It is easy to find examples of non-overlapping (in $\RR^d$) $\sigma, \tau \subseteq \gamma_d$ with $\sigma \preceq_{d+1} \tau$. It is also not very difficult to find $\sigma, \tau \subseteq \gamma_d$ with $\sigma \preceq_{d+1} \tau$ and that the liftings $\hat\sigma$ and $\hat \tau$ overlap in $\RR^{d+1}$:

\begin{eg}\label{eg:height_order_overlap}
Consider the following simplices on $\gamma_2$: $\sigma=\{1,4,6\}$, $e=\{2,7\}$ and $\tau=\{3,5,8\}$. Then $\sigma<_3 e <_3 \tau$ (so in particular $\sigma \preceq_3 \tau$) but $\sigma$ and $\tau$ are 6-interlacing.
\end{eg}

All these suggest that there is subtle difference between the ``above-below'' relation $<_{d+1}$ and the order $\preceq_{d+1}$. While we mostly use $<_{d+1}$ to compare heights between two simplices, $\preceq_{d+1}$ is helpful when we give a total ordering of given simplices consistent with $<_{d+1}$ as in the proof of Theorem \ref{thm_main}(i) at Subsection \ref{subsec_proof_main} and in the proof of Proposition \ref{prop_only_LR} at Subsection \ref{subsec_combinatorial_lemma}.

\subsection{Height of a triangulation and Higher Stasheff-Tamari posets} \label{subsec_height_triang_simplex}

A \textit{triangulation} of a $d$-polytope $P\subseteq \RR^d$ is a geometric simplicial complex whose realization space is exactly the polytope $P$.
Let $S(n,d)$ be the set of all triangulations on vertex set $[n]$ of the cyclic polytope $C(n,d)$. The \textit{lifting of }$T \in S(n,d)$ is the family of the liftings of all simplices of $T$, so it is also a geometric simplicial complex whose realization space is a section inside the cyclic polytope $C(n,d+1)$.

For a triangulation $T \in S(n,d)$, we can similarly define the height function $h_T:C(n,d) \to \RR$ as the union of $h_\sigma: \conv(\sigma) \to \RR$ for every $\sigma \in T$, that is, if a point $p \in \conv(\sigma)$, we let $h_T(p)=h_\sigma(p)$. Then $h_T$ is well-defined for every $T\in S(n,d)$.

This enables us to compare heights between a triangulation and a simplex, or between two triangulations. For a triangulation $T \in S(n,d)$ and a simplex $\sigma \subseteq \gamma_d$, we say $T \leq_{d+1} \sigma$ ($\sigma \leq_{d+1} T$ resp.) if $h_T(p) \leq h_\sigma(p)$ ($h_\sigma(p)\leq h_T(p)$ resp.) for every point $p \in \conv(\sigma)$.

The following lemma follows easily from the definitions and our earlier observations. A similar statement holds for $\sigma \geq_{d+1} T$.

\begin{lemma} \label{lemma_triangulation_simplex_simple}
    For a triangulation $T \in S(n,d)$ and a simplex $\sigma \subseteq \gamma_d$, $\sigma \leq_{d+1} T$ if and only if for every $\tau \in T$ either $\sigma$ and $\tau$ do not overlap in $\RR^d$ or $\sigma <_{d+1} \tau$.
\end{lemma}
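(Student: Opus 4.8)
The plan is to turn the single global inequality defining $\sigma\le_{d+1}T$ into a family of local inequalities, one per simplex of $T$, and then recognize each local inequality as case (A) or (B) of Corollary~\ref{cor_cases}. \textbf{Step 1 (localization).} Unwinding the definition, $\sigma\le_{d+1}T$ says $h_\sigma(p)\le h_T(p)$ for every $p\in\conv(\sigma)$. Since $T$ triangulates $C(n,d)$ and $\conv(\sigma)\subseteq C(n,d)$, we have $\conv(\sigma)=\bigcup_{\tau\in T}\bigl(\conv(\sigma)\cap\conv(\tau)\bigr)$, and by the very well-definedness of $h_T$ (equivalently, by the fact that distinct simplices of $T$ do not overlap and hence their height functions agree on their common face) we have $h_T(p)=h_\tau(p)$ whenever $p\in\conv(\tau)$ and $\tau\in T$. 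Therefore $\sigma\le_{d+1}T$ is equivalent to: for every $\tau\in T$, $h_\sigma\le h_\tau$ on the common domain $\conv(\sigma)\cap\conv(\tau)$.

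\textbf{Step 2 (the pairwise dichotomy).} Fix $\tau\in T$ and apply Corollary~\ref{cor_cases} to the pair $(\sigma,\tau)$. If (A) holds, $h_\sigma$ and $h_\tau$ coincide on the common domain (which is just the common face), so $h_\sigma\le h_\tau$ there; if (B) holds, $h_\sigma\le h_\tau$ on the common domain is literally part of the definition of $\sigma<_{d+1}\tau$. Conversely, I would rule out (C) and (D) whenever $h_\sigma\le h_\tau$ on the common domain. In case (C) we have $\tau<_{d+1}\sigma$, so $h_\tau\le h_\sigma$ on the common domain; combined with the assumption this forces $h_\sigma=h_\tau$ on the whole of $\conv(\sigma)\cap\conv(\tau)$, which strictly contains the common face $\conv(\sigma\cap\tau)$ because $\sigma$ and $\tau$ overlap in $\RR^d$, so the liftings $\hat\sigma,\hat\tau$ meet outside their common face and hence overlap in $\RR^{d+1}$ --- contradicting that (C) excludes (D). In case (D) the liftings overlap in $\RR^{d+1}$, equivalently $\sigma$ and $\tau$ are $(d+3)$-interlacing; deleting the first, respectively the last, element of a witnessing alternating sequence of length $d+3$ yields $(d+2)$-interlacing subsequences starting in $\sigma$ and in $\tau$, so applying Proposition~\ref{prop_interlacing_implies_strict_less_pt} to the pair $(\tau,\sigma)$ with the subsequence whose starting parity matches its hypothesis produces a point of the common domain where $h_\tau<h_\sigma$, again contradicting $h_\sigma\le h_\tau$. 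Hence ``$h_\sigma\le h_\tau$ on $\conv(\sigma)\cap\conv(\tau)$'' is equivalent to ``(A) or (B)'', i.e.\ to ``$\sigma$ and $\tau$ do not overlap in $\RR^d$, or $\sigma<_{d+1}\tau$''. Combining with Step~1 proves the lemma; the companion statement for $\sigma\ge_{d+1}T$ is the same with all inequalities and interlacing parities reversed.

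\textbf{Main obstacle.} The routine part is Step~1 (the covering property of a triangulation plus unwinding the definitions of $h_T$, $<_{d+1}$ and $\le_{d+1}$). The delicate part is Step~2, and within it case (D): one must ensure the overlap of the \emph{liftings} genuinely forces $h_\sigma$ to strictly exceed $h_\tau$ at some point of $\conv(\sigma)\cap\conv(\tau)$ --- not merely that the two affine height functions cross --- and one must choose the $(d+2)$-interlacing subsequence with the correct starting parity (and possibly interchange $\sigma$ and $\tau$) so that the hypothesis of Proposition~\ref{prop_interlacing_implies_strict_less_pt} is met. Alternatively, since conditions (A)--(D) are purely combinatorial, this pairwise dichotomy could be verified entirely in terms of interlacing patterns, bypassing the geometric bookkeeping.
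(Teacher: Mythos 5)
Your proof is correct and follows essentially the same route as the paper: localize the inequality $h_\sigma\le h_T$ to each $\tau\in T$ and then observe that on the common domain the inequality holds iff the pair is in case (A) or (B). Your Step 2 is more elaborate than necessary — since $\sigma<_{d+1}\tau$ is \emph{defined} as ``overlap and $h_\sigma\le h_\tau$ on the common domain,'' the dichotomy follows immediately from whether or not $\sigma$ and $\tau$ overlap, without explicitly excluding cases (C) and (D) — but your extra case analysis is valid.
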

\begin{proof}
    By definition, $\sigma \leq_{d+1} T$ if and only if for every $\tau \in T$ whenever $\sigma$ intersects $\tau$ we have $h_\sigma\leq h_\tau$ in the common domain. 
    The latter holds iff either  $\sigma$ and $\tau$ do not overlap in $\RR^d$ or they do overlap and additionally $\sigma <_{d+1} \tau$.
\end{proof}

The following lemma is a reformulation of Lemma \ref{lemma_triangulation_simplex_simple} which will be useful later. It generalizes \cite[Propositions 3.2 and 4.1]{edelman_cyclic_triangulation_envelope}.

\begin{lemma} \label{lemma_triangulation_simplex}
For every triangulation $T \in S(n,d)$ and every simplex $\sigma \subseteq \gamma_d$ there holds: $\sigma \leq_{d+1} T$ (resp. $T\leq_{d+1} \sigma$)  if and only if there are no $\sigma' \subseteq \sigma$ and $\tau \in T$ satisfying $\tau<_{d+1}\sigma'$ with $|\sigma'|=\lceil d/2 \rceil +1$ and $|\tau|=\lfloor d/2 \rfloor+1$
(resp. satisfying $\sigma'<_{d+1}\tau$ with $|\tau|=\lceil d/2 \rceil +1$ and $|\sigma'|=\lfloor d/2 \rfloor+1$).
\end{lemma}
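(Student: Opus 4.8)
The plan is to reduce the comparison $\sigma \le_{d+1} T$ (which by Lemma~\ref{lemma_triangulation_simplex_simple} means: for every $\tau \in T$, either $\sigma,\tau$ do not overlap in $\RR^d$, or $\sigma <_{d+1} \tau$) to the ``minimal witness'' formulation involving only subsets $\sigma' \subseteq \sigma$ of size $\lceil d/2\rceil+1$ and simplices $\tau \in T$ of size $\lfloor d/2\rfloor +1$. The easy direction is the contrapositive of one implication: if there exist $\sigma' \subseteq \sigma$ and $\tau \in T$ with $|\sigma'| = \lceil d/2\rceil+1$, $|\tau| = \lfloor d/2\rfloor+1$, and $\tau <_{d+1} \sigma'$, then by Proposition~\ref{prop_height_interlacing} the pair $\sigma',\tau$ is $(d+2)$-interlacing with the pattern witnessing $\tau <_{d+1}\sigma'$; since $\sigma' \subseteq \sigma$, the same interlacing sequence shows $\sigma$ and $\tau$ are $(d+2)$-interlacing with $\tau$ on the ``lower'' side, hence (by Proposition~\ref{prop_height_interlacing} applied to $\sigma,\tau$, using that the interlacing pattern only depends on the order of the witnessing vertices) either $\tau <_{d+1} \sigma$ or $\sigma,\tau$ are $(d+3)$-interlacing; in both cases Proposition~\ref{prop_interlacing_implies_strict_less_pt} / Corollary~\ref{cor_cases} produce a point $p$ with $h_\tau(p) < h_\sigma(p)$, so $h_\sigma \not\le h_\tau$ at a common point and $\sigma \not\le_{d+1} T$. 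One must be slightly careful that $\tau$ really is a face of $T$ (it is, being in $T$) so that $h_T = h_\tau$ on $\conv(\tau)$ near that point; this is where I'd invoke that $p$ can be taken in the relative interior of $\conv(\tau)\cap\conv(\sigma)$.

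For the converse — the substantive direction — suppose $\sigma \not\le_{d+1} T$. Then by Lemma~\ref{lemma_triangulation_simplex_simple} there is some $\tau_0 \in T$ such that $\sigma$ and $\tau_0$ overlap and we do \emph{not} have $\sigma <_{d+1} \tau_0$; by Corollary~\ref{cor_cases} this puts the pair $(\sigma,\tau_0)$ in Case (C) ($\sigma >_{d+1} \tau_0$, i.e. $\tau_0 <_{d+1}\sigma$) or Case (D) ($\sigma,\tau_0$ are $(d+3)$-interlacing). In either case $\sigma$ and $\tau_0$ are $(d+3)$-interlacing or $(d+2)$-interlacing with $\tau_0$ on the ``low'' side, so there is a witnessing alternating sequence starting with an element of $\tau_0$ (when $d$ even, this is pattern~\eqref{interlacing_type2} of length $d+2$; when $d$ odd, pattern~\eqref{interlacing_type1}) — in all sub-cases one extracts a sequence $v_1 < v_2 < \cdots < v_{d+2}$ alternating between $\tau_0$ and $\sigma$ realizing the pattern of Proposition~\ref{prop_height_interlacing} that certifies ``$\tau_0$ below $\sigma$''. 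From this sequence I would select $\tau' \subseteq \tau_0$ consisting of the $\lfloor d/2\rfloor+1$ of the $v_i$'s lying in $\tau_0$ and $\sigma' \subseteq \sigma$ consisting of the $\lceil d/2\rceil+1$ lying in $\sigma$; then $\tau'$ and $\sigma'$ are still $(d+2)$-interlacing with exactly the pattern of Proposition~\ref{prop_height_interlacing}, hence $\tau' <_{d+1}\sigma'$. Since $\tau' \subseteq \tau_0 \in T$ but $\tau'$ need not itself be a \emph{face} of $T$, the last wrinkle is to replace $\tau'$ by an honest face $\tau \in T$: but $\tau_0 \in T$ and any subset of a face of $T$ is a face of $T$ (simplicial complex), so $\tau' \in T$ already. (If instead the statement demands $|\tau| = \lfloor d/2\rfloor+1$ exactly and $\tau'$ came out smaller, pad it with further vertices of $\tau_0$; one checks padding preserves the interlacing certificate because inserting extra elements of $\tau_0$ between consecutive $v_i$'s keeps an alternating subsequence — this is the routine bookkeeping I'd not spell out fully.) This yields the required $\sigma', \tau$, completing the contrapositive.

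The symmetric statement ($T \le_{d+1} \sigma$ iff no $\sigma' \subseteq \sigma$, $\tau \in T$ with $\sigma' <_{d+1} \tau$, $|\tau| = \lceil d/2\rceil+1$, $|\sigma'| = \lfloor d/2\rfloor+1$) follows by the same argument with the roles of ``upper'' and ``lower'' exchanged, i.e. by reflecting the order on $[n]$ or equivalently swapping patterns \eqref{interlacing_type1} and \eqref{interlacing_type2} throughout; I would simply remark that the proof is verbatim dual rather than repeat it.

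The step I expect to be the main obstacle is the \emph{size-control} part of the converse: extracting from an arbitrary $(d+2)$- or $(d+3)$-interlacing witness between $\sigma$ and $\tau_0$ a sub-pair $(\sigma',\tau')$ with the \emph{exact} cardinalities $\lceil d/2\rceil+1$ and $\lfloor d/2\rfloor+1$ while preserving the precise interlacing pattern demanded by Proposition~\ref{prop_height_interlacing} (satisfying one of \eqref{interlacing_type1}, \eqref{interlacing_type2} but \emph{not} the other). A length-$(d+2)$ alternating sequence already has $\lceil (d+2)/2\rceil$ elements on the side it starts with and $\lfloor(d+2)/2\rfloor$ on the other, which are exactly $\lceil d/2\rceil+1$ and $\lfloor d/2\rfloor +1$ — so for the $(d+2)$-interlacing case the witness has precisely the right sizes and nothing must be trimmed; the real care is needed only in Case (D), where one starts from a length-$(d+3)$ witness and must drop one vertex from the longer side without creating the \emph{opposite} pattern (which would wrongly flip the inequality). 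I would handle this by noting that dropping the first or last vertex of the length-$(d+3)$ alternating sequence leaves a length-$(d+2)$ alternating sequence of the correct type, and that the resulting sub-simplices cannot \emph{also} satisfy the opposite pattern because that would make them $(d+3)$-interlacing, contradicting Proposition~\ref{prop_overlap} applied to the $\lceil d/2\rceil+1$- and $\lfloor d/2\rfloor+1$-element simplices living on $\gamma_d$ (whose own ambient ``overlap dimension'' is exactly $d$). Making that last contradiction airtight — i.e. pinning down that the extracted small simplices overlap in $\RR^d$ but their liftings do not — is the delicate point, and it is exactly what Propositions~\ref{prop_overlap}, \ref{prop_height_interlacing} and Corollary~\ref{cor_cases} are designed to deliver.
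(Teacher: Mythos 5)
Your proposal is correct and follows essentially the same route as the paper: reduce via Lemma~\ref{lemma_triangulation_simplex_simple} and Corollary~\ref{cor_cases} to the statement that no $\tau_0\in T$ falls in Case (C) or (D), and then translate that into the existence or non-existence of a $(d+2)$-interlacing witness of the exact sizes $\lceil d/2\rceil+1$ and $\lfloor d/2\rfloor+1$; you merely spell out the witness extraction that the paper leaves implicit. The only cosmetic remarks are that no padding is ever needed (the extracted alternating sequence automatically has the exact cardinalities) and that the impossibility of the opposite pattern for the extracted pair follows from the simple count $|\sigma'\cup\tau'|=d+2<d+3$ rather than from Proposition~\ref{prop_overlap}.
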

\begin{proof}
We prove the statement for $\sigma \leq_{d+1} T$, the statement for $T\leq_{d+1} \sigma$ is proved analogously.
    Note first that the conditions in the right-hand side of Lemma \ref{lemma_triangulation_simplex_simple} correspond to Conditions (A) and (B) in Corollary \ref{cor_cases} with the consistent notation. So Lemma \ref{lemma_triangulation_simplex_simple} implies that $\sigma \leq_{d+1} T$ if and only if we never have Cases (C) or (D) in Corollary \ref{cor_cases} for the simplex $\sigma$ and any $\tau \in T$, again with the consistent notation. Concerning interlacing patterns, this condition exactly corresponds to the condition at the "right-hand side" of the lemma.
\end{proof}

For two triangulations $T,T'\in S(n,d)$, say $T \leq_{d+1} T'$ if $h_T(p) \leq h_{T'}(p)$ for every point $p\in C(n,d)$. This relation $\leq_{d+1}$ is a partial order on $S(n,d)$, and was introduced by Edelman and Reiner  \cite{edelman_cyclic_triangulation_envelope} as the \textit{second higher Stasheff-Tamari order}.
This order (which coincides with their  \textit{first} higher Stasheff-Tamari order by \cite{williams_twoordersequal})
defines the \textit{higher Stasheff-Tamari poset} $\HST(n,d)=(S(n,d), \leq_{d+1})$. 

The following result by Rambau \cite[Theorem 1.1]{rambau} will be also useful.

\begin{thm} \label{lemma_maximal_chain}
	There is a surjective map $\Psi$ from the set of maximal chains of $\HST(n,d)$ to $S(n,d+1)$ such that 
    for a maximal chain $\mathcal{C}:T_1 \leq_{d+1} T_2 \leq_{d+1} \cdots \leq_{d+1} T_k$ in $\HST(n,d)$, the $d$-skeleton of $\Psi(\mathcal{C})$ is exactly the union of the liftings of all $T_i$ (which in turn determines  $\Psi(\mathcal{C})$).
\end{thm}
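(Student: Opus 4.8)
The plan is to prove Theorem~\ref{lemma_maximal_chain} by an inductive ``slicing'' construction: build the triangulation $\Psi(\mathcal{C})\in S(n,d+1)$ directly from a maximal chain $\mathcal{C}:T_1\leq_{d+1}T_2\leq_{d+1}\cdots\leq_{d+1}T_k$ in $\HST(n,d)$, and then establish surjectivity separately. The starting observation is that the cyclic polytope $C(n,d+1)$ projects onto $C(n,d)$ under forgetting the last coordinate, and for a triangulation $S$ of $C(n,d+1)$ the restriction of the ``upper-facet'' part (the part visible from $+\infty$ in the last coordinate) to a horizontal slice gives a triangulation of $C(n,d)$ together with a height function; sweeping the slicing hyperplane from bottom to top sweeps through a monotone sequence of triangulations of $C(n,d)$, and the points at which the combinatorial type changes are exactly interior $(d+1)$-faces of $S$ that get ``passed through.'' So the $d$-skeleton of $S$ is forced to be the union of the liftings $\hat{T_i}$ of the triangulations appearing in this sweep, and conversely a maximal chain of $\HST(n,d)$ records precisely such a sweep in which consecutive $T_i,T_{i+1}$ differ by an elementary move (a single bistellar flip, corresponding to passing through one interior $(d+1)$-simplex).

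Concretely, first I would recall from \cite{edelman_cyclic_triangulation_envelope} the characterisation of covering relations in $\HST(n,d)$: $T_i\lessdot_{d+1}T_{i+1}$ means $T_{i+1}$ is obtained from $T_i$ by a single ``upward'' bistellar flip supported on a subpolytope combinatorially isomorphic to $C(d+3,d+1)$, i.e. an interior $(d+1)$-simplex whose lower half lies in the lifting of $T_i$ and whose upper half lies in the lifting of $T_{i+1}$. Given the maximal chain $\mathcal{C}$, define $\Psi(\mathcal{C})$ to be the complex whose $(d+1)$-simplices are exactly the interior simplices witnessing each covering step $T_i\lessdot_{d+1}T_{i+1}$, $i=1,\dots,k-1$, together with the boundary $(d+1)$-faces of $C(n,d+1)$ (both the lower and upper envelopes of $C(n,d+1)$, which by Proposition~\ref{prop_gale} are forced and correspond to the bottom triangulation $T_1$ being the ``lower'' triangulation and $T_k$ the ``upper'' one — this uses that a maximal chain must start at the unique minimum and end at the unique maximum of $\HST(n,d)$). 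The main work is then to check that this collection of simplices is in fact a triangulation of $C(n,d+1)$: I would verify that they are pairwise non-overlapping (using Proposition~\ref{prop_overlap}/Corollary~\ref{cor_cases}: two lifted simplices from different levels $\hat\tau\in\hat T_i$, $\hat\tau'\in\hat T_j$ with $i<j$ cannot overlap because $\tau\leq_{d+1}\tau'$ along the chain, by Lemmas~\ref{lemma_triangulation_simplex_simple} and \ref{lemma_triangulation_simplex}), and that they cover $C(n,d+1)$ by a volume/Euler-characteristic count or by the sweep argument: every point of $C(n,d+1)$ lies on a vertical line whose intersection with $C(n,d+1)$ is a segment, and following that segment upward from the lower envelope corresponds exactly to moving up through the chain $T_1,\dots,T_k$, so the segment is covered. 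That the $d$-skeleton of $\Psi(\mathcal{C})$ is the union of the $\hat T_i$ is then immediate: each $\hat T_i$ is recovered as the ``slice'' of $\Psi(\mathcal{C})$ just above level $i$, and the union over $i$ accounts for all $d$-faces since every $d$-face of $\Psi(\mathcal{C})$ is either a horizontal face in some slice or a face of one of the chosen interior $(d+1)$-simplices, hence appears in some $\hat T_i$.

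For surjectivity: given any $S\in S(n,d+1)$, the sweeping-hyperplane argument produces a sequence of triangulations $T_1,\dots,T_k$ of $C(n,d)$ with $T_1$ the lower and $T_k$ the upper triangulation, consecutive ones related by a flip (since a generic sweep passes through interior $(d+1)$-faces one at a time, and a non-generic coincidence can be perturbed away by tilting the slicing functional slightly, which is legitimate because the moment-curve combinatorics only depend on order); this is a maximal chain $\mathcal{C}$ in $\HST(n,d)$ with $\Psi(\mathcal{C})=S$. I expect the main obstacle to be the ``covering $\Rightarrow$ single interior simplex'' step and its converse — making precise that each elementary step of a maximal chain corresponds to inserting exactly one interior $(d+1)$-simplex (bounded below and above by $d$-faces lying in $\hat T_i$ and $\hat T_{i+1}$ respectively) and that no two such simplices from different steps overlap — since this is where the full strength of the $\HST$ machinery and the interlacing characterisations of Section~\ref{sec_prelim} are needed; the covering/volume bookkeeping is then routine. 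I would lean on \cite{edelman_cyclic_triangulation_envelope} for the structure of covers in $\HST(n,d)$ rather than reprove it.
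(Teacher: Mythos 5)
First, note that the paper does not prove this statement at all: Theorem~\ref{lemma_maximal_chain} is quoted verbatim from Rambau~\cite[Theorem 1.1]{rambau}, so there is no internal proof to compare against. Your sketch does follow the broad outline of Rambau's actual argument (upward flips correspond to interior $(d+1)$-simplices, and a maximal chain assembles these into a triangulation one dimension up), but as written it has two genuine gaps.

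The first gap is your opening step, where you ``recall from \cite{edelman_cyclic_triangulation_envelope} the characterisation of covering relations in $\HST(n,d)$'' as single upward bistellar flips. The poset $\HST(n,d)$ in this paper is the \emph{second} higher Stasheff--Tamari order (height comparison), and the statement that its covering relations are exactly single flips is, for general $d$, essentially equivalent to the coincidence of the first and second orders --- a result of Williams~\cite{williams_twoordersequal} that was open for decades; Edelman and Reiner only establish it for $d\le 3$. You cannot simply cite it as known background in a self-contained proof of a theorem stated for all $d$ (and Rambau's own theorem is formulated so as to avoid this dependence). Relatedly, be careful with your non-overlapping argument: as Example~\ref{eg:height_order_overlap} shows, $\sigma\preceq_{d+1}\tau$ ``along the chain'' does \emph{not} prevent the liftings from overlapping; what saves you for faces of comparable triangulations is the direct pointwise inequality $h_{T_i}\le h_{T_j}$, and the non-overlap of the $(d+1)$-dimensional flip simplices from different steps still needs a separate argument. (Also, $C(n,d+1)$ has no boundary $(d+1)$-faces to adjoin; its upper and lower envelopes are $d$-dimensional and are accounted for by $\hat T_k$ and $\hat T_1$.)

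The second and more serious gap is surjectivity. A literal sweep by horizontal hyperplanes $x_{d+2}=c$ does not produce elements of $S(n,d)$: the slice of a triangulation of $C(n,d+1)$ by such a hyperplane is a polytopal complex on new vertices, not a lifted triangulation on $[n]$. The correct ``sweep'' is combinatorial: one orders the $(d+1)$-cells of $S$ by a linear extension of the relation ``$\sigma$ lies below $\tau$ on some vertical line'' and takes upper boundaries of the downward-closed initial segments. For this to work one must prove that this below-relation is \emph{acyclic} for every triangulation of a cyclic polytope --- this is the central technical lemma of Rambau's paper, it can fail for triangulations of general polytopes, and it is not addressed by ``perturbing the slicing functional'', since the obstruction is combinatorial rather than a genericity issue. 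One must also check that each step of the resulting chain is a covering relation in $\leq_{d+1}$, which loops back to the first gap. So the skeleton of your construction is right, but the two load-bearing lemmas are exactly the ones left unproved.
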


 We introduced various relations throughout this section: between simplices, between a simplex and a triangulation, and between triangulations respectively;
however, they are easily distinguishable by context.

\subsection{Submersion set of a triangulation and lattice property}
\label{subsec_submersion}

For a triangulation $T \in S(n,d)$, the \textit{submersion set} $\sub(T)$ is the set defined as
\[\sub(T)=\{\sigma \subseteq \gamma_d: \sigma\leq_{d+1} T,\, \dim(\sigma)=\lceil d/2 \rceil \}.\]
Submersion sets were used to provide the following encoding result which in particular implies that $\HST(n,d)$ is a lattice for $d=2,3$, see \cite[Theorems 3.6, 4.9]{edelman_cyclic_triangulation_envelope}. This encoding is equivalent to the one provided in \cite{HUANG19727}  for $d=2$.

\begin{thm} \label{lemma_lattice}
	For $d=2$ or $3$, the poset map $\Phi$ from $\HST(n,d)$ to the poset $\PP=\left(2^{\binom{[n]}{\lceil d/2 \rceil+1}}, \subseteq\right)$
	given by $\Phi:T \mapsto \sub(T)$ is an injection, and the subposet of $\PP$ induced by the image of $\Phi$ is a lattice where the meet $\sub(T_1) \wedge \sub(T_2)$ is given by the intersection $\sub(T_1)\cap \sub(T_2)$.	In particular, $\HST(n,d)$ is a lattice.
\end{thm}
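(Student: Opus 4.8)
The plan is to prove Theorem~\ref{lemma_lattice} by establishing three things in sequence: that $\Phi$ is injective, that the image of $\Phi$ is closed under intersection, and that intersection gives the meet in the induced subposet; the lattice property then follows from a standard argument (a finite meet-semilattice with a top element is a lattice, and $\HST(n,d)$ has the top triangulation given by the upper envelope, namely $\DD$-type facets from Proposition~\ref{prop_gale}).

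\textbf{Injectivity of $\Phi$.} First I would show that $\sub(T)$ determines $T$ when $d\in\{2,3\}$. The key is Lemma~\ref{lemma_triangulation_simplex}: for $d=2$, a simplex $\sigma$ (a triangle, i.e. $|\sigma|=2=\lceil d/2\rceil+1$) satisfies $\sigma\le_3 T$ iff there is no $\tau\in T$ (an edge, $|\tau|=2=\lfloor d/2\rfloor+1$) with $\tau<_3\sigma$; wait—let me be careful: for $d=2$, $\lceil d/2\rceil+1 = 2$ and $\lfloor d/2\rfloor+1 = 2$, so both $\sigma'$ and $\tau$ are edges, but $\sigma'\subseteq\sigma$ forces $\sigma'=\sigma$ when $\sigma$ is itself of dimension $\lceil d/2\rceil = 1$. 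So membership of a $1$-simplex $\sigma$ in $\sub(T)$ is governed purely by the edges of $T$ that it interlaces. Conversely, the set of $1$-simplices of $T$ itself is recoverable from $\sub(T)$: an edge $e$ of $T$ lies in $\sub(T)$ (since $e\le_3 e$ trivially, and $e$ does not overlap any $\tau\in T$, being a face), and the edges of $T$ are exactly the minimal-under-$<_3$ elements of a suitable subset—more precisely, one uses that the edges in a triangulation of $C(n,2)$ are determined by which are "lowest." I would cite or reprove the $d=2$ encoding via the bijection with ``snake''/monotone paths as in \cite{HUANG19727}, and for $d=3$ invoke the argument of \cite[Theorem 4.9]{edelman_cyclic_triangulation_envelope} directly: $\sub(T)$ for $d=3$ records the $2$-simplices (triangles) lying weakly below $T$, and the internal triangles of $T$ are recovered as those $\sigma\in\sub(T)$ such that $\sigma\notin\sub(T')$ for the triangulation $T'$ obtained by flipping down—but cleaner is to quote that $\Phi$ is injective as established in loc. cit., since the problem statement says we may assume earlier results. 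In the interest of a self-contained plan: $\Phi$ injective $\iff$ $\sub(T)=\sub(T')\Rightarrow h_T=h_{T'}$ pointwise, and the latter follows because by Lemma~\ref{lemma_triangulation_simplex} the condition $\sigma\le_{d+1}T$ for all relevant $\sigma$ pins down $h_T$ on a set of simplices whose convex hulls cover $C(n,d)$.

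\textbf{Closure under intersection and the meet.} Given $T_1,T_2\in S(n,d)$ I would define $I=\sub(T_1)\cap\sub(T_2)$ and must produce $T\in S(n,d)$ with $\sub(T)=I$. The natural candidate: let $h=\min(h_{T_1},h_{T_2})$, the pointwise minimum of the two height functions on $C(n,d)$. This $h$ is a piecewise-linear function, and the key claim is that its graph is the lifting of a triangulation $T$ of $C(n,d)$—i.e. that $\min(h_{T_1},h_{T_2})$ is again the height function of an element of $S(n,d)$. For $d=2,3$ this is where the dimension restriction bites, and it is exactly the content of the lattice property being nontrivial. I would argue via the submersion encoding rather than directly: show that $I$ is ``realizable,'' meaning there exists $T$ with $\sub(T)=I$, by verifying that $I$ satisfies the combinatorial axioms characterizing images of $\Phi$ (a down-set under $\le_{d+1}$ in the poset of $\lceil d/2\rceil$-simplices, compatible with Gale/$\DD$-conditions), and that these axioms are preserved under intersection because each of $\sub(T_1),\sub(T_2)$ satisfies them and the axioms are intersection-closed. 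Then for any $T'$ with $\sub(T')\subseteq\sub(T_1)$ and $\sub(T')\subseteq\sub(T_2)$ we get $\sub(T')\subseteq I=\sub(T)$, so $T'\le_{d+1}T$ (using that $\Phi$ is an order-embedding, which follows from injectivity together with the easy monotonicity $T'\le_{d+1}T\Rightarrow\sub(T')\subseteq\sub(T)$), hence $T$ is the meet.

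\textbf{Main obstacle.} The hard step is showing the image of $\Phi$ is closed under intersection, equivalently that $\min(h_{T_1},h_{T_2})$ is the height function of a genuine triangulation for $d=2,3$. This is false in higher dimensions, so any correct argument must use $d\le 3$ in an essential way—my plan is to lean on the explicit combinatorial characterization of $\sub(T)$ from \cite{edelman_cyclic_triangulation_envelope} (for $d=2$, monotone lattice paths / for $d=3$, the "covering" conditions on triangles), reducing closure-under-intersection to a finite checkable combinatorial condition, rather than attempting a geometric PL argument about graphs of piecewise-linear functions. I would isolate this as the one place invoking Lemma~\ref{lemma_triangulation_simplex} and Corollary~\ref{cor_cases} most heavily, using them to translate $\le_{d+1}$-comparisons entirely into interlacing patterns so that the whole verification becomes combinatorial.
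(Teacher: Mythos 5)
The paper does not prove this statement at all: it is quoted verbatim from Edelman--Reiner \cite[Theorems 3.6, 4.9]{edelman_cyclic_triangulation_envelope} (with the $d=2$ case equivalent to the encoding in \cite{HUANG19727}), so the ``paper's proof'' is a citation. Your outline is a reasonable reconstruction of how that proof is structured --- injectivity of $\Phi$, closure of the image under intersection, intersection realizing the meet, and then the standard fact that a finite meet-semilattice with a maximum (here the upper-envelope triangulation) is a lattice --- and you correctly identify that the entire difficulty, and the only place $d\le 3$ enters, is closure under intersection. Since you ultimately defer to loc.\ cit.\ for exactly the steps the paper also defers, your approach is essentially the same as the paper's.

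Two caveats if you intend this as more than a pointer to the literature. First, the step ``verify that $I$ satisfies the combinatorial axioms characterizing images of $\Phi$ \ldots and that these axioms are preserved under intersection because \ldots the axioms are intersection-closed'' is not an argument: you never state the axioms (for $d=3$ these are the nontrivial ``triangle-closed'' conditions of Edelman--Reiner), and their intersection-closedness is precisely the theorem's content, not something that follows formally from each $\sub(T_i)$ satisfying them. Second, a directional slip: the faces of $T$ of size $\lceil d/2\rceil+1$ are recovered from $\sub(T)$ as the elements $\tau$ with no $\sigma\in\sub(T)$ satisfying $\tau<_{d+1}\sigma$, i.e.\ the \emph{maximal} (``highest''), not minimal (``lowest''), elements --- compare Lemma~\ref{lemma_IMT}. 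Neither issue matters if the intent is simply to cite \cite{edelman_cyclic_triangulation_envelope}, which is what the paper does.
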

The join in $\HST(n,d)$ is determined by the meet 
since $\HST(n,d)$ has a maximum.
In order to prove that $\Phi$ 
is injective, one of the key ideas in \cite{edelman_cyclic_triangulation_envelope} is to extract ``maximal'' elements from a submersion set, or more precisely, \textit{diagonal-closed} or \textit{triangle-closed} families, and (re)construct the triangulation from them. 
In a similar manner, we reformulate and prove the following for our purpose without considering the closed families after having a simple observation.

\begin{lemma} \label{lemma_IMT}
For $T\in S(n,d)$ and a simplex $\tau \subseteq \gamma_d$ with $\tau \leq_{d+1} T$, 
$\tau$ is a face of $T$ if and only if 
there are no $\sigma \in \sub(T)$ such that $\tau<_{d+1}\sigma$.
\end{lemma}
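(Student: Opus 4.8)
The plan is to prove both implications via their combinatorial characterizations from Proposition~\ref{prop_height_interlacing} and Lemma~\ref{lemma_triangulation_simplex}. For the forward direction, suppose $\tau$ is a face of $T$ and suppose for contradiction that there exists $\sigma\in\sub(T)$ with $\tau<_{d+1}\sigma$. Since $\sigma\le_{d+1} T$ and $\sigma$ has dimension $\lceil d/2\rceil$, the simplex $\sigma$ (lifted) sits weakly below the lifting of $T$; but $\tau<_{d+1}\sigma$ means the lifting $\hat\tau$ lies strictly below $\hat\sigma$ on the interior of their common intersection domain. As $\tau$ is an actual face of $T$, the graph of $h_\sigma$ must touch or lie above $h_T=h_\tau$ on $\conv(\tau)\cap\conv(\sigma)$, contradicting $h_\sigma\ge h_\tau$ there with strict inequality somewhere. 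The point to be careful about is that $\sigma$ and $\tau$ genuinely overlap in $\RR^d$ (so that ``$<_{d+1}$'' has content): this is exactly what $\tau<_{d+1}\sigma$ asserts by definition, so the contradiction with $\sigma\le_{d+1}T$ and $\tau\in T$ is immediate from Lemma~\ref{lemma_triangulation_simplex_simple}.

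For the reverse (harder) direction, suppose $\tau\le_{d+1} T$ but $\tau$ is not a face of $T$, and we must produce $\sigma\in\sub(T)$ with $\tau<_{d+1}\sigma$. Since $\tau$ is not a face of $T$ yet its lifting sits weakly below that of $T$, the relative interior of $\conv(\tau)$ is not contained in the relative interior of any single face of $T$; hence there is some $\tau'\in T$ whose relative interior of $\conv(\tau')$ meets $\operatorname{relint}\conv(\tau)$ but $\tau'\neq\tau$ and $\tau$ is not a face of $\tau'$. I would then intersect: pick $\tau'$ of minimal dimension among faces of $T$ meeting $\operatorname{relint}\conv(\tau)$. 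The strategy is to find a subset $\sigma\subseteq\tau$ of size $\lceil d/2\rceil+1$ that $(d+2)$-interlaces with a suitable face of $T$ in the ``wrong'' way — more precisely, one uses Lemma~\ref{lemma_triangulation_simplex} in contrapositive form applied to $\sigma$ against $T$: if $\sigma\le_{d+1} T$ for every such $\sigma$, then... — but here I want the opposite, so the key is to choose $\sigma$ so that $\sigma\not\le_{d+1}T$ is impossible (since $\tau\le_{d+1}T$ forces $\sigma\le_{d+1}T$ for all $\sigma\subseteq\tau$), yet $\sigma\in\sub(T)$ still has room to strictly dominate $\tau$ in height.

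The cleaner route, which I expect to be the actual argument, is this: because $\tau\le_{d+1}T$, \emph{every} subset $\sigma\subseteq\tau$ of dimension $\lceil d/2\rceil$ satisfies $\sigma\le_{d+1}T$ (heights only go down on sub-simplices in the relevant sense, or directly: $h_\sigma=h_\tau\le h_T$ on $\conv(\sigma)$), hence lies in $\sub(T)$. Now since $\tau\notin T$, there is such a $\sigma\subseteq\tau$ that is itself \emph{not} a face of $T$ — indeed, if all dimension-$\lceil d/2\rceil$ subsets of $\tau$ were faces of $T$, then by the reconstruction principle underlying Theorem~\ref{lemma_lattice} (diagonal/triangle-closed families for $d=2,3$, and the Gale/interlacing combinatorics in general) $\tau$ would be forced to be a face of $T$, a contradiction. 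Pick such a $\sigma$ of maximal dimension that is not a face of $T$; if $\dim\sigma<\lceil d/2\rceil$ we can enlarge within $\tau$, landing in $\sub(T)$. Then $\sigma\le_{d+1}T$ and $\sigma\notin T$, so by the forward direction applied to $\sigma$ there is $\sigma'\in\sub(T)$ with $\sigma<_{d+1}\sigma'$; but also $\tau$ and $\sigma'$ must then overlap and $\tau<_{d+1}\sigma'$ follows because $h_\tau=h_\sigma$ on $\conv(\sigma)\supseteq$ the relevant overlap region while $h_{\sigma'}>h_\sigma$ there, and one checks (via Proposition~\ref{prop_height_interlacing} / Corollary~\ref{cor_cases}) that the interlacing pattern of $\tau$ with $\sigma'$ cannot be Case (D) since $\tau\le_{d+1}T$ and $\sigma'\in\sub(T)$ rules out strict height-crossing. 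The main obstacle is justifying ``not all $\lceil d/2\rceil$-subsets of $\tau$ are faces of $T$ when $\tau\notin T$'': this is essentially the injectivity of $\Phi$ packaged locally, and I would either invoke the diagonal-closed / triangle-closed machinery of \cite{edelman_cyclic_triangulation_envelope} directly (valid since in this paper the lemma is used for $d=2,3$) or give the short interlacing argument that a face of $T$ all of whose half-dimensional subsets lie in $\sub(T)$ and are faces of $T$ must itself be in $T$ by the Gale evenness / no-forbidden-interlacing characterization of triangulations sketched after Question~\ref{question_main}.
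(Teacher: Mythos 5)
Your forward (``only if'') direction is correct and is essentially the paper's argument: $\sigma\in\sub(T)$ and $\tau\in T$ overlapping forces $\sigma<_{d+1}\tau$ by Lemma~\ref{lemma_triangulation_simplex_simple}, which excludes $\tau<_{d+1}\sigma$ by Corollary~\ref{cor_cases}.

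The reverse direction, however, has a genuine gap: it is circular. After locating a $\lceil d/2\rceil$-dimensional $\sigma\subseteq\tau$ with $\sigma\le_{d+1}T$ and $\sigma\notin T$, you write ``by the forward direction applied to $\sigma$ there is $\sigma'\in\sub(T)$ with $\sigma<_{d+1}\sigma'$.'' The forward direction only says that membership in $T$ implies the \emph{absence} of such a $\sigma'$; its contrapositive derives $\sigma\notin T$ from the existence of $\sigma'$, never the existence of $\sigma'$ from $\sigma\notin T$. What you are invoking is exactly the ``if'' direction of the lemma, i.e.\ the statement under proof. (Two smaller issues: your reduction via Proposition~\ref{prop_d/2-skeleton} silently assumes $\dim\tau\ge\lceil d/2\rceil$, whereas the lemma allows arbitrary $\tau$; and the final transfer of $\sigma<_{d+1}\sigma'$ to $\tau<_{d+1}\sigma'$ via ``$h_\tau=h_\sigma$ on the overlap region'' is not justified, since the overlap of $\tau$ with $\sigma'$ need not lie in $\conv(\sigma)$ and Case (D) of Corollary~\ref{cor_cases} is not ruled out merely by both simplices lying below $T$.) The paper's argument avoids all of this by working with $\tau$ directly: if $\tau$ overlapped some face $\rho\in T$, then $\tau\le_{d+1}T$ and Lemma~\ref{lemma_triangulation_simplex_simple} give $\tau<_{d+1}\rho$, and Proposition~\ref{prop_height_interlacing} lets one extract from the interlacing pattern a subset $\rho'\subseteq\rho$ of size $\lceil d/2\rceil+1$ with $\tau<_{d+1}\rho'$; since $\rho'$ is a face of $T$ it lies in $\sub(T)$, contradicting the hypothesis. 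Hence $\tau$ overlaps no face of $T$, and since $T$ covers $\conv([n])$, the relative interior of $\conv(\tau)$ meets that of some $\rho\in T$ in a common face, forcing $\tau=\rho\in T$. Your proof needs this last covering step (or an equivalent) in place of the circular appeal.
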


\begin{proof}
The only if implication is obvious since whenever $\sigma \in \sub(T)$ and $\tau \in T$ overlap in $\RR^d$, we have $\sigma <_{d+1} \tau$ by the definition of $\sub(T)$ and Lemma \ref{lemma_triangulation_simplex_simple}, which forbids  $\tau<_{d+1}\sigma$ by Corollary \ref{cor_cases}.
For the opposite direction, suppose the condition at the right-hand side. For any $\sigma' \in T$ of size $\lceil d/2\rceil+1$, since $\sigma'$ belongs to $\sub(T)$, we cannot have $\tau <_{d+1} \sigma'$. This implies that $\sigma \in T$ and $\tau$ never overlap in $\RR^d$ otherwise we have $\tau <_{d+1} \sigma'$ for some $\sigma' \subseteq \sigma$ of size $\lceil d/2\rceil+1$ by the assumption $\tau \leq_{d+1} T$, Lemma \ref{lemma_triangulation_simplex_simple}, and Proposition \ref{prop_height_interlacing}. 
Therefore, if $\conv(\tau)$ and $\conv(\sigma)$ intersect in their relative interiors for some $\sigma \in T$, 
they should have $\tau=\tau\cap \sigma=\sigma$. This implies $\tau \in T$.
\end{proof}

\section{Extending simplices into a triangulation} \label{sec_lattice}
In this section we prove Theorem~\ref{thm_main}(i). There are two ingredients in the proof: the fact that $\HST(n,d)$ is a
lattice for $d=2,3$, Lemma~\ref{lemma_lattice}; and the following theorems showing
the existence of triangulations in $\HST(n,2)$ and $\HST(n,3)$ with some special properties. 

\begin{thm} \label{thm_level_d=2}
Let $\sigma \subseteq \gamma_2$ be an edge or a triangle whose vertices are contained in $[n]$. Then there is a triangulation $T(\sigma) \in S(n,2)$ such that $\sigma \in T(\sigma)$ and for any triangle or edge $\tau$ different from $\sigma$ 
which does not overlap $\sigma$ in $\RR^2$ or does satisfy $\tau <_3 \sigma$, we have $\tau \leq_3 T(\sigma)$.
\end{thm}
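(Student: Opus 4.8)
The plan is to build $T(\sigma)$ explicitly as a pulling/pushing triangulation of $C(n,2)$ adapted to $\sigma$, and then verify the required height domination via the combinatorial criterion of Lemma~\ref{lemma_triangulation_simplex}. Concretely, write $\sigma = \{a < b\}$ (edge case) or $\sigma = \{a < m < b\}$ (triangle case). First I would cut the polygon $C(n,2)$ along the chord(s) of $\sigma$ into sub-polygons: for an edge $\{a,b\}$ this gives the ``upper'' region on $[a,b]$ and the ``lower'' region on $[1,a]\cup\{b,\dots,n\}$ (using the moment-curve order); for a triangle $\{a,m,b\}$ it gives three regions bounded by the chords $\{a,m\}$, $\{m,b\}$, $\{a,b\}$. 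In each sub-polygon I would triangulate so that the completed $T(\sigma)$ sits as low as possible subject to containing $\sigma$: on the region(s) ``above'' $\sigma$ I pull toward the lowest available vertex (the leftmost one, which on $\gamma_2$ gives the lower envelope of that sub-polygon), and on the region(s) ``below'' $\sigma$ I again take the lower-envelope triangulation relative to that sub-polygon's vertex set. Since $\sigma$ itself is inserted as a face, $\sigma \in T(\sigma)$ by construction.

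Next I would translate the target conclusion into the language of Corollary~\ref{cor_cases} and Lemma~\ref{lemma_triangulation_simplex}: for $d=2$, $\tau \leq_3 T(\sigma)$ fails exactly when there is a triangle $\tau' \in T(\sigma)$ (size $\lceil d/2\rceil+1 = 2$... here $\dim=\lceil 2/2\rceil = 1$, so $|\sigma'|=2$, $|\tau'| = 2$) — more precisely, $\tau \leq_3 T(\sigma)$ iff there is no edge $\tau_0 \subseteq \tau$ and no edge $e \in \sub(T(\sigma))$... I should instead use Lemma~\ref{lemma_triangulation_simplex} directly: $\tau \le_3 T(\sigma)$ iff there is no $\sigma'\subseteq\tau$ with $|\sigma'|=2$ and no $\rho\in T(\sigma)$ with $|\rho|=2$ and $\rho <_3 \sigma'$. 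So I must show: for every $\tau$ that either does not overlap $\sigma$ or satisfies $\tau <_3 \sigma$, there is no edge $e$ of $T(\sigma)$ and edge $\tau_0\subseteq\tau$ with $e <_3 \tau_0$, i.e. $e$ and $\tau_0$ are $4$-interlacing in the pattern of Proposition~\ref{prop_height_interlacing} (even case: $e$ starts, $\tau_0$ interleaves, no $5$-interlacing). The key point is that all interior edges of $T(\sigma)$ are chords contained either ``inside'' $\sigma$'s chords or ``outside'' them, and in both cases I would argue that such an edge $e$ cannot $4$-interlace a sub-edge of $\tau$ in the bad direction, using that $\tau$ is hypothesized to be $\le_3 \sigma$ (either non-overlapping, or strictly below) together with transitivity of heights: $\tau \le_3 \sigma \le_3 T(\sigma)$ would give $\tau\le_3 T(\sigma)$ if only I can show $\sigma$ as a single simplex satisfies $\sigma \le_3 T(\sigma)$ on the relevant region — but $\sigma\in T(\sigma)$ makes this automatic where domains meet. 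So the real content is: for $\tau$ not overlapping $\sigma$, the regions of $T(\sigma)$ that $\tau$ passes through are triangulated as the respective lower envelopes, hence nothing in $T(\sigma)$ lies strictly below $\tau$ there.

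I would organize the verification by cases on where $\tau$ lies relative to $\sigma$'s chords (entirely in one sub-region, or crossing a chord of $\sigma$), and in each case compare $\tau$ against the explicit edges of the sub-polygon triangulations using Proposition~\ref{prop_height_interlacing}'s interlacing condition. The main obstacle I anticipate is the ``crossing'' case: when $\tau$ genuinely straddles a chord of $\sigma$ but still does not overlap $\sigma$ in $\RR^2$ (so $\tau$ shares that chord-endpoint structure as a common face) — here I need to check that the two halves of $\tau$, living in different sub-polygons, are each dominated by the corresponding lower-envelope triangulation, and that no edge of $T(\sigma)$ crossing near the shared chord creates a forbidden interlacing. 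Handling the triangle case $\sigma=\{a,m,b\}$ is where this is most delicate, since there are three chords and three sub-regions meeting at $m$, and an edge $\tau$ through $m$ must be checked against edges emanating into all adjacent regions; I expect to need the precise description of which vertex each sub-polygon is ``pulled'' from so that its interior edges all emanate from the leftmost vertex of that sub-polygon, which is exactly what forces the lower-envelope (height-minimal) property and rules out $e <_3 \tau_0$.
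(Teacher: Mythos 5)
Your construction goes in the wrong direction, and this is fatal. The conclusion $\tau \leq_3 T(\sigma)$ means $h_\tau \le h_{T(\sigma)}$ on the common domain, i.e.\ $T(\sigma)$ must lie \emph{above} every admissible $\tau$; so one needs the \emph{highest} triangulation containing $\sigma$, not the lowest. You propose to triangulate each sub-polygon by fanning from its leftmost vertex, which (as you correctly note) is the lower-envelope, i.e.\ the $\leq_3$-minimal, triangulation of that sub-polygon, and you justify this with ``nothing in $T(\sigma)$ lies strictly below $\tau$ there'' --- but the lower envelope is precisely the triangulation below which nothing lies, so edges of your $T(\sigma)$ will typically sit \emph{below} $\tau$. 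Concretely, take $n=5$ and $\sigma=\{2,4\}$: your $T(\sigma)$ fans the quadrilateral on $\{1,2,4,5\}$ from the vertex $1$ and hence contains the diagonal $e'=\{1,4\}$. The edge $\tau=\{2,5\}$ does not overlap $\sigma$ (they are not $4$-interlacing), so the theorem demands $\tau\leq_3 T(\sigma)$; but $1<2<4<5$ gives $e'<_3\tau$ by Proposition \ref{prop_height_interlacing}, so by Lemma \ref{lemma_triangulation_simplex} we have $\tau\not\leq_3 T(\sigma)$. The correct construction (the one in the paper) fans each sub-polygon from its \emph{maximum} vertex, producing the join of all triangulations containing $\sigma$; in the example this inserts $\{2,5\}$ itself as the diagonal and the problem disappears.

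A secondary problem is your proposed verification via ``transitivity of heights'': $\tau\leq_3\sigma\leq_3 T(\sigma)$ does not yield $\tau\leq_3 T(\sigma)$, because each relation only compares heights on the (possibly small, possibly empty) overlap of domains --- in particular it says nothing about the portion of $\conv(\tau)$ lying outside $\conv(\sigma)$, which is exactly where the trouble occurs; and the hypothesis even allows $\tau$ disjoint from $\sigma$. The actual argument has to be global: with the fan-from-maximum construction, any interior edge $e'$ of the sub-polygon $P_1$ on $[v_1,v_2]$ has $\max(e')=v_2$ and $\min(e')\ge v_1$, so if $e'<_3 e$ for some edge $e\subseteq\tau$ then $v_1\le\min(e')<\min(e)<v_2<\max(e)$, forcing $v_1v_2<_3 e$ and hence contradicting the hypothesis that $\tau$ does not overlap $\sigma$ or satisfies $\tau<_3\sigma$. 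Your case decomposition of the polygon is the right skeleton, but without reversing the fan direction and replacing the transitivity step by an argument of this kind, the proof does not go through.
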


\begin{thm} \label{thm_level_d=3}
Let $\sigma, \tau_1, \tau_2, \dots, \tau_m \subseteq \gamma_3$ be distinct triangles whose liftings are pairwise non-overlapping in $\RR^4$ and whose vertices are contained in $[n]$. We further assume that $\min(\sigma)\leq \min(\tau_i)$ for every $i\in [m]$, and $\min(\sigma)=\min(\tau_i)$ implies $\max(\sigma) \geq \max(\tau_i)$ for every $i \in [m]$. Then there is a triangulation $T \in S(n,3)$ such that $\sigma \in T$ and  $\tau_i \leq_4 T$ for every $i \in [m]$. 	
\end{thm}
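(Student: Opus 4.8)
\textbf{Proof proposal for Theorem~\ref{thm_level_d=3}.}

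The plan is to construct $T$ explicitly as the unique triangulation in $S(n,3)$ whose submersion set $\sub(T)$ is as large as possible among those triangulations containing $\sigma$, and then check that this forces $\tau_i \le_4 T$ for every $i$. More precisely, I would first build a candidate family $\mathcal{S}$ of triangles (the ``intended'' submersion set): start from $\sub(\st(\sigma))$-type data, i.e.\ the triangles $\rho$ with $\rho \le_4 \sigma$ together with $\sigma$ itself and all triangles $\rho$ forced below by the hypotheses on the $\tau_i$'s, and close it up under the operation ``if $\rho <_4 \rho'$ is impossible to avoid then add $\rho'$''. Using Lemma~\ref{lemma_lattice} for $d=3$, a subset of $\binom{[n]}{2+1}$ is of the form $\sub(T)$ for some $T$ iff it is ``diagonal/triangle-closed'' in the sense of Edelman--Reiner; so the real content is to show that $\mathcal{S}$ (suitably defined) is closed and that $\sigma \in T$ where $\Phi(T) = \mathcal{S}$.

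The key steps, in order, are: (1) Show that the hypotheses $\min(\sigma) \le \min(\tau_i)$, and $\min(\sigma)=\min(\tau_i) \Rightarrow \max(\sigma) \ge \max(\tau_i)$, combined with the assumption that the liftings $\hat\sigma, \hat\tau_1, \dots, \hat\tau_m$ are pairwise non-overlapping in $\RR^4$ (i.e.\ no $6$-interlacing, case (D) excluded by Corollary~\ref{cor_cases}), imply that $\sigma$ and each $\tau_i$ are in case (A) or case (B) of Corollary~\ref{cor_cases} but \emph{never} case (C) --- that is, we never have $\tau_i <_4 \sigma$. This is the combinatorial heart: the interval and endpoint conditions are exactly engineered to rule out the interlacing pattern of Proposition~\ref{prop_height_interlacing} (with $d=3$, the $\tau$-first-and-not-$\sigma$-first pattern) with $\tau_i$ in the $\tau$-role. (2) Invoke Lemma~\ref{lemma_triangulation_simplex} / Lemma~\ref{lemma_triangulation_simplex_simple}: to get $\tau_i \le_4 T$ it suffices to find $T \in S(n,3)$ containing $\sigma$ with the property that for every triangle $\rho \in T$, either $\tau_i$ does not overlap $\rho$ in $\RR^3$ or $\tau_i <_4 \rho$; equivalently, by Lemma~\ref{lemma_IMT} and step (1), it suffices that $T$ contains $\sigma$ and that $\sub(T)$ contains no edge/triangle $\rho$ with $\tau_i <_4 \rho$ ``witnessed below'' --- I would phrase the target as: every triangle of $T$ lies $\ge_4 \tau_i$ or is disjoint-in-the-interior from $\tau_i$. (3) Construct $T$: take the triangulation induced by pushing $\sigma$ as low as possible, formally the meet in $\HST(n,3)$ (Lemma~\ref{lemma_lattice}) of all triangulations containing $\sigma$ --- this meet exists since that set is nonempty (the ``pulling'' triangulation at the vertex $\min(\sigma)$ or an explicit star-triangulation works) and is closed under meet because $\sigma \in T_1, \sigma \in T_2 \Rightarrow \sigma \in T_1 \wedge T_2$ (as $\sub(T_1\wedge T_2) = \sub(T_1)\cap\sub(T_2) \ni \sigma$ when $\sigma$ is a triangle, and one checks $\sigma$ remains a face via Lemma~\ref{lemma_IMT}). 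Call this minimal element $T$. (4) Verify $\tau_i \le_4 T$: since $T$ is $\le_4$-minimal among triangulations containing $\sigma$, and since by step (1) no $\tau_i$ can be $<_4 \sigma$, any triangle $\rho \in T$ overlapping $\tau_i$ with $\rho <_4 \tau_i$ would contradict minimality --- more carefully, I would argue that if $\tau_i \not\le_4 T$ then by Lemma~\ref{lemma_triangulation_simplex} there is $\tau_i' \subseteq \tau_i$ and $\tau \in T$ with $\tau <_4 \tau_i'$, and then a bistellar flip lowering $\tau$ (or replacing the star of the offending simplex) produces a smaller triangulation still containing $\sigma$, a contradiction --- using here crucially that such a downward flip does not disturb $\sigma$, which is where the endpoint hypothesis on $(\sigma,\tau_i)$ re-enters to guarantee $\sigma$ and $\tau_i'$ don't interact badly.

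The main obstacle I anticipate is step (4), specifically showing that the $\le_4$-minimal triangulation containing $\sigma$ actually sits \emph{below} all the $\tau_i$'s simultaneously, rather than merely below those faces not blocked by $\sigma$. The subtlety, highlighted already in Example~\ref{eg:height_order_overlap}, is that $\preceq_4$-comparability does not coincide with the ``above-below'' relation, so one cannot naively say ``$\tau_i$ is below everything that $\sigma$ forces to be high.'' I expect the resolution to require a careful case analysis of how a putative obstruction $\tau \in T$ with $\tau <_4 \tau_i'$ interlaces with $\sigma$: either $\tau$ overlaps $\sigma$ --- then $\sigma <_4 \tau$ (as $\sigma \in \sub$-data forces it), but also $\tau <_4 \tau_i'$ and $\tau_i' \preceq_4 \tau_i$ while $\sigma \not<_4 \tau_i$, contradicting Lemma~\ref{lemma_linear_extension}'s antisymmetry of $\preceq_4$ once the chain $\sigma <_4 \tau <_4 \tau_i' \preceq \dots$ is assembled --- or $\tau$ does not overlap $\sigma$, in which case a flip removing $\tau$ keeps $\sigma$ a face and strictly lowers the triangulation, contradicting minimality. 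Making this dichotomy airtight, and in particular ruling out the first branch using the hypotheses on the $\min/\max$ of $\sigma$ versus $\tau_i$ together with the non-overlapping-of-liftings assumption, is the crux; the $d=3$ structure of $\HST(n,3)$ (where flips are well-understood and the lattice structure from Lemma~\ref{lemma_lattice} is available) is what makes it tractable.
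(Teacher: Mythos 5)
Your construction points in the wrong direction, and this breaks steps (1), (3) and (4). To get $\tau_i\leq_4 T$ you need $T$ to lie \emph{above} each $\tau_i$: by Lemma~\ref{lemma_triangulation_simplex_d=3} the only obstruction is an edge $e\in T$ with $e<_4\tau_i$, and low triangulations are precisely the ones with many low edges. Taking the \emph{meet} of all triangulations containing $\sigma$ therefore produces the worst possible candidate, not the best. Concretely, let $n\geq 7$, $\sigma=\{1,2,n\}$ and $\tau_1=\{1,3,6\}$; the hypotheses of the theorem hold, and since $\sigma$ is a boundary facet of $C(n,3)$ it lies in every triangulation, so your $T$ is the global minimum of $\HST(n,3)$, i.e.\ the projected lower envelope of $C(n,4)$. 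That triangulation contains the facet $\{2,3,5,6\}$ and hence the edge $\{2,5\}$, and $1<2<3<5<6$ is a $5$-interlacing beginning in $\tau_1$, so $\{2,5\}<_4\tau_1$ and $\tau_1\not\leq_4 T$. Step (1) is likewise backwards: the hypotheses on $\min$ and $\max$ together with the non-overlapping of liftings rule out case (B), $\sigma<_4\tau_i$ (which is the case that would be fatal, since $\sigma\in T$ forces every face of $T$ overlapping $\tau_i$ to be above $\tau_i$), whereas case (C), $\tau_i<_4\sigma$, is perfectly possible and harmless: $\sigma=\{1,3,6\}$ and $\tau_1=\{2,4,5\}$ satisfy all hypotheses yet $\tau_1<_4\sigma$. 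Finally, in step (4) an edge $\tau\in T$ with $\tau<_4\tau_i'$ is no obstruction to $\leq_4$-minimality --- the global minimum triangulation is full of such edges and admits no downward flip --- so the claimed contradiction never materializes.

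Even after reversing the direction, no static extremal choice depending only on $\sigma$ can work: the natural candidate would be the join of all triangulations containing $\sigma$, but the set of triangulations containing a fixed face is not closed under join in $\HST(n,3)$; the example at the end of Subsection~\ref{subsec_2-dim} exhibits two distinct \emph{maximal} triangulations of $\HST(7,3)$ containing a fixed simplex, so there is no maximum, and which of the many maximal candidates is suitable depends on the whole collection $\{\tau_i\}$. This is exactly why the paper's proof is ``dynamic'': after normalizing $\min(\sigma)=1$ by coning, it starts from the maximum triangulation on $[n]\setminus I_M$ (which contains $\sigma$ and whose edges cannot $5$-interlace any triangle), then cones in the vertices of the interval $I_M$ strictly between $v_2$ and $v_3$ in an order encoded by a $2$-dimensional triangulation of $J_M$; the only potentially harmful edges of the result are the edges of that planar triangulation, and producing one that avoids all $5$-interlacings with the $\tau_i$ is the content of the combinatorial Lemma~\ref{lemma_LMR_LMRT}, for which your proposal offers no substitute.
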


We first present the proof of Theorem~\ref{thm_main}(i) assuming these facts in Subsection~\ref{subsec_proof_main}. The proofs of Theorems \ref{thm_level_d=2} and \ref{thm_level_d=3} are given in Subsections \ref{subsec_2-dim} and \ref{subsec_d=4}, respectively. The proof of Theorem \ref{thm_level_d=3} is a bit involved and depends on Lemma \ref{lemma_LMR_LMRT} which is a purely combinatorial claim. Lemma \ref{lemma_LMR_LMRT} and its proof are given in Subsection \ref{subsec_combinatorial_lemma}. Discussions delivered in Subsection \ref{subsec_combinatorial_lemma} are independent of other parts of this paper.

\subsection{Proof of Theorem \ref{thm_main}(i) modulo the existence of suitable triangulations}
\label{subsec_proof_main}
The following proposition enables us to reduce $\FF$ from Theorem~\ref{thm_main}(i) to consist only of simplices of dimension two when $D=4$, and of dimensions two or one when $D=3$.

\begin{prop} \label{prop_d/2-skeleton}
If a triangulation $T$ of a $D$-polytope contains the $\lceil D/2 \rceil$-dimensional skeleton of a $k$-simplex $\sigma$ for some $k$ with $\lceil D/2 \rceil \leq k\leq D$, then $T$ contains $\sigma$ as well.
\end{prop}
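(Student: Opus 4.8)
The plan is to argue by contradiction in the setting relevant to Theorem~\ref{thm_main}(i): let $T\in S(n,D)$ and let $\sigma$ be a simplex on a subset of $[n]\subseteq\gamma_D$, so that every face of $\sigma$ of dimension at most $\lceil D/2\rceil$ lies in $T$, and suppose $\sigma\notin T$. Since the vertices of $\sigma$ are in convex position, $\conv(\sigma)\subseteq C(n,D)$, so $\conv(\sigma)$ is covered by $T$; pick any point $x$ in the relative interior of $\conv(\sigma)$ and let $\tau\in T$ be the unique face of $T$ whose relative interior contains $x$. (If $k=\lceil D/2\rceil$ the claim is immediate, since then $\sigma$ itself lies in its own $\lceil D/2\rceil$-skeleton; so the content is in the case $k>\lceil D/2\rceil$, and the argument below handles all $k$ uniformly.)

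First I would show that $\sigma$ and $\tau$ overlap in $\RR^D$. If $\sigma\subseteq\tau$, then $\conv(\sigma)$ is a face of $\conv(\tau)\in T$, hence $\sigma\in T$, a contradiction; so $\sigma\cap\tau\subsetneq\sigma$, which means $\conv(\sigma\cap\tau)$ is a proper face of $\conv(\sigma)$ and is therefore disjoint from its relative interior, so $x\notin\conv(\sigma\cap\tau)$. As $x\in\conv(\sigma)\cap\conv(\tau)$, this gives $\conv(\sigma)\cap\conv(\tau)\supsetneq\conv(\sigma\cap\tau)$, i.e.\ $\sigma$ and $\tau$ overlap.

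Next, by Proposition~\ref{prop_overlap} the overlap means $\sigma$ and $\tau$ are $(D+2)$-interlacing: there is a sequence $v_1<v_2<\cdots<v_{D+2}$ of elements of $\sigma\cup\tau$ alternating between $\sigma$ and $\tau$. Write $\sigma'$ for the set of those $v_i$ occupying positions assigned to $\sigma$ in this alternation, and $\tau'$ for those assigned to $\tau$; then $\sigma'\subseteq\sigma$, $\tau'\subseteq\tau$, and $\sigma'\cap\tau'=\emptyset$ since the $v_i$ are distinct, so $\sigma'$ and $\tau'$ are distinct nonempty simplices. The very same sequence $v_1<\cdots<v_{D+2}$ witnesses that $\sigma'$ and $\tau'$ are $(D+2)$-interlacing, so by Proposition~\ref{prop_overlap} they overlap. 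Since in an alternating sequence of length $D+2$ one part has $\lceil(D+2)/2\rceil=\lceil D/2\rceil+1$ elements and the other at most that many, we have $\dim(\sigma')\le\lceil D/2\rceil$; thus $\sigma'$ is a face of $\sigma$ of dimension at most $\lceil D/2\rceil$, so $\sigma'\in T$ by hypothesis, and $\tau'\in T$ as a face of $\tau\in T$. But then $T$ contains the two distinct overlapping simplices $\sigma'$ and $\tau'$, contradicting that $T$ is a geometric simplicial complex (any two of its faces meet in a common face). Hence $\sigma\in T$.

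The only delicate point is the first step: locating a face $\tau$ of $T$ that genuinely overlaps $\sigma$ rather than meeting it along a common face. This is handled cleanly by the observation that the carrier $\tau$ of an interior point of $\conv(\sigma)$ cannot contain $\sigma$ unless $\sigma$ is already a face of $T$. Everything afterwards is the interlacing bookkeeping of Proposition~\ref{prop_overlap}, which forces the ``small'' half of any interlacing pattern witnessing the overlap to fit inside the $\lceil D/2\rceil$-skeleton of $\sigma$, and hence inside $T$ itself.
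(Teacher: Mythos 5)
Your proof is correct and follows essentially the same route as the paper's: locate a face of $T$ overlapping $\sigma$, invoke the interlacing characterization of overlap (Proposition~\ref{prop_overlap}), and observe that one side of the $(D+2)$-interlacing pattern is a face of $\sigma$ of dimension at most $\lceil D/2\rceil$, hence already in $T$, contradicting that $T$ is a simplicial complex. Your treatment is slightly more detailed (using the carrier of an interior point rather than just asserting an overlapping facet, and extracting both halves $\sigma',\tau'$ of the pattern), but these are refinements of the same argument, not a different one.
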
 
\begin{proof}
	Suppose not. Then there is a facet $\tau\in T$ which overlaps $\sigma$ in $\RR^D$, hence $\sigma$ and $\tau$ are $(D+2)$-interlacing by Proposition \ref{prop_overlap}. Therefore we can find $\sigma'\subseteq \sigma$ of size $\lfloor (D+2)/2 \rfloor$ or $\lceil (D+2)/2 \rceil$ 
    which overlaps $\tau$ in $\RR^D$. But by assumption we have $\sigma'\in T$ as well as $\tau\in T$, which leads to a contradiction. 
\end{proof}

We temporarily assume the validity of Theorems \ref{thm_level_d=2} and \ref{thm_level_d=3} (proved in Subsections~\ref{subsec_2-dim} and \ref{subsec_d=4} respectively)
and use them to prove Theorem \ref{thm_main}(i).

\begin{proof}[Proof of Theorem \ref{thm_main}(i)]
When $D=3$, we may assume $\FF$ consists only of triangles and edges: indeed, for a 3-dimensional simplex $\sigma\in \FF$, we can remove $\sigma$ from $\FF$ and add the four triangles $t_1, t_2, t_3$ and $t_4$ from the boundary of $\sigma$. If a desired triangulation $T$ contains $t_1, \dots, t_4$ then $T$ must contain $\sigma$ as well by Proposition \ref{prop_d/2-skeleton}. 
When $D=4$, we may assume $\FF$ consists only of triangles:
a 4-dimensional cyclic polytope contain all possible edges on its vertex set at its boundary, thus we can remove them from $\FF$. Hence, by Proposition \ref{prop_d/2-skeleton} again, we only need to consider triangles, as for every $k$-simplex $\sigma\in \FF$ with $k \geq 3$, we replace $\sigma$ with its 2-faces. Obviously, for both $D=3,4$, after this replacement all simplices in the resulted collection of faces are again pairwise non-overlapping in $\RR^D$.

Let $d=D-1$ and $m=|\FF|$. We project the simplices in $\FF$ into $\RR^d$ by ignoring the last coordinate. 
Abusing notation, we also identify such projections with subsets of $[n]$.

We order the projections as $\sigma_1, \sigma_2, \dots, \sigma_m$ so that the ordering satisfies the following conditions (which depend on dimension $D$): 
When $D=3$, we require that for every $1\le i<j\le m$ either $\sigma_i <_D \sigma_j$ or $\sigma_i$ and $\sigma_j$ do not overlap in $\RR^d$;  such an ordering can be given by a linear extension of the partial order $\preceq_D$ guaranteed in Lemma \ref{lemma_linear_extension}.
Certainly in that linear extension, we cannot have $\sigma_j <_D \sigma_i$ for $1 \le i<j \le m$.
Also, the liftings $\hat\sigma_i$ and $\hat\sigma_j$ do not overlap in $\RR^D$ by the assumption. So by Corollary \ref{cor_cases}, either $\sigma_i <_D \sigma_j$ or $\sigma_i$ and $\sigma_j$ do not overlap in $\RR^d$. When $D=4$, we require that whenever $i<j$ we have $\min(\sigma_j) \leq \min(\sigma_i)$, and if $\min(\sigma_i)= \min(\sigma_j)$ then $\max(\sigma_i)\leq \max(\sigma_j)$. This ordering is clearly possible. Therefore by Theorems \ref{thm_level_d=2} and \ref{thm_level_d=3},  we obtain $T_i \in S(n, d)$ for every $i \in [m]$ such that $\sigma_i \in T_i$ and $\sigma_j \leq_D T_i$ for $j<i$.

Now we use the lattice property of $\HST(n,d)$ guaranteed by Theorem \ref{lemma_lattice}. For every $k\in [m]$, let $S_k=T_k \wedge T_{k+1} \wedge \cdots \wedge T_m$ where $\wedge$ denotes the meet operation. 
Clearly these $S_k$ form a chain in $\HST(n,d)$; considering a maximal chain in $\HST(n,d)$ containing all $S_k$ gives a triangulation $\widetilde T \in S(n,D)$ which contains the liftings of all $S_k$ by Theorem \ref{lemma_maximal_chain}.

It remains to show that $S_k$ contains $\sigma_k$ for every $k\in [m]$. 
It is enough to show that for every 
$\lceil d/2 \rceil$-dimensional face $\tau$ of $\sigma_k$, we have $\tau\in \sub(S_k)$ and $\tau$ satisfies the right-hand side condition of Lemma \ref{lemma_IMT} with $T$ replaced by $S_k$; once it is shown, then by Lemma \ref{lemma_IMT} every $\lceil d/2 \rceil$-dimensional face of $\sigma_k$ belongs to $S_k$.
Depending on the dimension of $\sigma_k$ there are two cases: (a) $\sigma_k$ itself is of dimension $\lceil d/2 \rceil$, namely an edge when $d=2$ or a triangle when $d=3$, where the proof is already complete; or (b) $\sigma_k$ is a triangle and $d=2$ where we use Proposition \ref{prop_d/2-skeleton} to derive the same conclusion.
 
Now,
\begin{align}
\sub(S_k)=\sub(\bigwedge_{i=k}^m T_i)=\bigcap_{i=k}^m\sub(T_i) \label{eq_lattice}
\end{align}
where the second equality is by Theorem \ref{lemma_lattice}. Since $\sigma_k\in T_k$ and $\sigma_k \leq_D T_i$ for every $i \in [k, m]$, we have $\tau \in \sub(T_i)$ for every $i \in [k,m]$, which together with (\ref{eq_lattice}) implies that $\tau \in \sub(S_k)$. Now assume for a contradiction (by Lemma \ref{lemma_IMT}), that there is another $\sigma' \in \sub(S_k)$ 
such that $\tau<_{d+1}\sigma'$.
In particular, $\sigma'$ and $\tau$ overlap in $\RR^d$.
Now by (\ref{eq_lattice}), we have $\sigma' \in \sub(T_k)$.  By definition of $\sub(T_k)$ and Lemma \ref{lemma_triangulation_simplex_simple} we have $\sigma'<_D \tau$, 
so we have a contradiction since $\sigma'<_D \tau$ and $\tau<_D \sigma'$ are exclusive events by Corollary \ref{cor_cases}. 
\end{proof} 
 
	\subsection{Proof of Theorem \ref{thm_level_d=2}} \label{subsec_2-dim}

	In this section, we prove Theorem \ref{thm_level_d=2}. While it is easy to construct a desired triangulation when $d=2$, it also serves as an important step towards the much subtler Theorem \ref{thm_level_d=3}.

    For convenience we spell out Lemma~\ref{lemma_triangulation_simplex} for the case $d=2$; when additionally $\sigma$ is an edge it gives \cite[Proposition 3.2]{edelman_cyclic_triangulation_envelope}).
    
    \begin{lemma} \label{lemma_triangulation_simplex_d=2}
		For every triangulation $T \in S(n,2)$ and every simplex $\sigma \subseteq \gamma_2$ there holds: $\sigma \leq_3 T$ ($\sigma \geq_3 T$, resp.) if and only if whenever an edge $e \subseteq \sigma$ crosses an edge $e' \in T$ we have $e<_3 e'$ ($e>_3 e'$, resp.).
\end{lemma}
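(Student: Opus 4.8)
The plan is to obtain Lemma~\ref{lemma_triangulation_simplex_d=2} as the instance $d=2$ of Lemma~\ref{lemma_triangulation_simplex}, followed by a short translation into the ``crossing'' vocabulary. First I would substitute $d=2$ into the right-hand side of Lemma~\ref{lemma_triangulation_simplex}: since $\lceil d/2\rceil+1=\lfloor d/2\rfloor+1=2$, the simplices $\sigma'$ and $\tau$ occurring there are both edges. Thus $\sigma\leq_3 T$ is equivalent to the nonexistence of an edge $e\subseteq\sigma$ and an edge $e'\in T$ with $e'<_3 e$, and dually $T\leq_3\sigma$ is equivalent to the nonexistence of an edge $e\subseteq\sigma$ and an edge $e'\in T$ with $e<_3 e'$.

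The second step is to observe that Case~(D) of Corollary~\ref{cor_cases} cannot occur for two edges on $\gamma_2$: it would require $e$ and $e'$ to be $5$-interlacing, which is impossible since $|e\cup e'|\le 4$. Hence, by Corollary~\ref{cor_cases}, for any edge $e\subseteq\sigma$ and any edge $e'\in T$ exactly one of the following holds: they do not overlap in $\RR^2$ (i.e.\ they do not cross), or $e<_3 e'$, or $e>_3 e'$.

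Finally I would read off the equivalence from this trichotomy. For the ``$\sigma\leq_3 T$'' half: a non-crossing pair imposes no constraint, while for a crossing pair $e,e'$ forbidding $e'<_3 e$ is precisely requiring $e<_3 e'$; conversely, if every crossing pair satisfies $e<_3 e'$ then no pair satisfies $e'<_3 e$. So the nonexistence of a forbidden pair says exactly that every edge $e\subseteq\sigma$ crossing an edge $e'\in T$ satisfies $e<_3 e'$. The ``$\sigma\geq_3 T$'' half follows by the same reasoning with $<_3$ and $>_3$ interchanged.

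The argument has essentially no obstacle; the only point needing a moment of care is confirming that Case~(D) is genuinely vacuous in dimension two, since this is what makes ``does not cross'' and ``crosses with $e<_3 e'$'' together exhaust the negation of the forbidden interlacing pattern. Everything else is bookkeeping, in particular keeping straight which of $\sigma'$ and $\tau$ plays the role of the subedge of $\sigma$ and which the edge of $T$ in the two dual halves of the statement.
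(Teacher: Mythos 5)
Your proposal is correct and matches the paper's intent exactly: the paper introduces Lemma~\ref{lemma_triangulation_simplex_d=2} as the $d=2$ specialization of Lemma~\ref{lemma_triangulation_simplex} without further proof, and your substitution $\lceil d/2\rceil+1=\lfloor d/2\rfloor+1=2$ together with the observation that Case~(D) of Corollary~\ref{cor_cases} is vacuous for two edges (since $5$-interlacing needs at least $5$ vertices) is precisely the bookkeeping that justifies the restatement in crossing vocabulary.
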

	
	\begin{proof}[Proof of Theorem \ref{thm_level_d=2}]
				\begin{figure}[ht]
			\centering
			\includegraphics[totalheight=2.8cm]{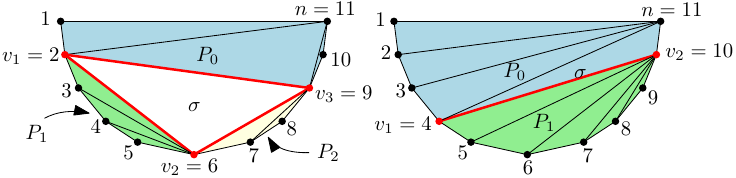}
			\caption{Extension of $\sigma$ into a triangulation $T(\sigma) \in S(n,2)$.
			}
			\label{fig:max_triangulation}
		\end{figure}
			Let $\sigma \subseteq \gamma_2$ be an edge or a triangle given in Theorem \ref{thm_level_d=2}. We denote $\sigma=\{v_1<v_2\}$ or $\sigma=\{v_1<v_2<v_3\}$ depending on whether $\sigma$ is an edge or a triangle.
		$\sigma$ dissects the region of $C(n,2)$ (outside $\sigma$ when $\sigma$ is a triangle) into at most 3 polygons $P_i$ which we need to further dissect into triangles, see Figure \ref{fig:max_triangulation}. For each polygon $P_i$, we add an edge between its maximum vertex $m_i$ (with respect to $<$) and its other vertices which were not previously adjacent to $m_i$. 
        For example, at the left of Figure \ref{fig:max_triangulation} the maximum vertices of $P_0$, $P_1$ and $P_2$ are $n=11$, $v_2=6$ and $v_3=9$, resp., while at the right the maximum vertices of $P_0$ and $P_1$ are $n=11$ and $v_2=10$, resp. This gives a triangulation $T=T(\sigma)$ of $C(n,2)$.
		
		It remains to show that if either $\tau$ does not overlap $\sigma$ in $\RR^2$ or $\tau<_3 \sigma$, then we have $\tau \leq_3 T$. 
        By using Lemma \ref{lemma_triangulation_simplex_d=2}, it is enough to show that for every edge $e\subseteq \tau$ must hold $e<_3 e'$ whenever $e$ crosses an edge $e' \in T$. When $e'\subseteq \sigma$ or $e'$ is an interior edge inside $P_0$, it is obvious. So we may assume $e'$ is an interior edge inside $P_1$ (or $P_2$ when $\sigma$ is a triangle, but this case can be argued similarly). Suppose otherwise that $e'<_3 e$ (using Corollary \ref{cor_cases}). Then we have $v_1\leq \min(e')<\min(e)<v_2=\max(e')<\max(e)$ by Proposition \ref{prop_height_interlacing}, which implies $v_1v_2 <_3 e$, contradicting our assumption on $\sigma$ and $\tau$.\end{proof}

This completes the proof of Theorem \ref{thm_main}(i) for $D=3$. 
The following corollary will be used in the proof of Theorem \ref{thm_level_d=3}.

\begin{cor} \label{cor_separating_level}
Let $E_1$, $E_2$ and $E_3$ be 3 disjoint sets of edges, not necessarily non-empty, on $\gamma_2$ with vertices contained in $[n]$. Suppose that the edges of $E_1 \cup E_2 \cup E_3$ are linearly ordered as $e_1, \dots, e_m$ such that
\begin{itemize}
    \item edges of $E_i$ come earlier than edges of $E_j$ whenever $i<j$,

    \item for $i,j \in [m]$ with $i<j$, we have either $e_i<_3 e_j$ or $e_i$ and $e_j$ do not cross, and

    \item the edges of $E_2$ are mutually non-crossing.
\end{itemize}
Then there is a triangulation $T\in S(n,2)$ such that we have
$e \leq_3 T$ for $e \in E_1$, $e \geq_3 T$ for $e \in E_3$, and $e \in T$ for $e \in E_2$. 
\end{cor}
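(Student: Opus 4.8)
The plan is to reduce this to Theorem \ref{thm_level_d=2} by treating the edges of $E_2$ one at a time, building a chain of triangulations in $\HST(n,2)$ and using the lattice property. First I would handle the case $E_2 = \emptyset$ separately (and trivially): any triangulation sandwiched appropriately exists, e.g. by taking the "staircase" triangulation associated with the last edge of $E_1$ — more carefully, note that the hypotheses say every edge of $E_1$ either lies below or does not cross every edge of $E_3$, so one can pick any triangulation that is an upper bound for $E_1$ and a lower bound for $E_3$; such a triangulation exists because $\HST(n,2)$ is a lattice (Theorem \ref{lemma_lattice}) and the relevant join/meet can be checked to be consistent via Lemma \ref{lemma_triangulation_simplex_d=2}. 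I expect this degenerate case to be short.

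For the main case, write $E_2 = \{f_1, \dots, f_r\}$ in the order induced by the given linear order $e_1, \dots, e_m$. For each $f_k$ apply Theorem \ref{thm_level_d=2} with $\sigma = f_k$ to obtain a triangulation $T(f_k) \in S(n,2)$ with $f_k \in T(f_k)$ and such that every edge or triangle $\tau$ that either does not overlap $f_k$ or satisfies $\tau <_3 f_k$ has $\tau \leq_3 T(f_k)$. Now set $S = T(f_1) \wedge T(f_2) \wedge \dots \wedge T(f_r)$, the meet in $\HST(n,2)$. By Theorem \ref{lemma_lattice}, $\sub(S) = \bigcap_k \sub(T(f_k))$. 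I would then verify, exactly as in the proof of Theorem \ref{thm_main}(i), that each $f_k \in S$: each $f_k$ is an edge, so it equals its own $\lceil 2/2 \rceil$-dimensional skeleton; since $f_k \in T(f_k)$ we have $f_k \in \sub(T(f_k))$, and since the edges of $E_2$ are mutually non-crossing and each $f_j$ with $j \neq k$ either precedes $f_k$ (hence $f_j <_3 f_k$ or non-crossing, so $f_j \leq_3 T(f_k)$) or follows it, we get $f_k \leq_3 T(f_j)$ for all $j$; hence $f_k \in \bigcap_j \sub(T(f_j)) = \sub(S)$. An application of Lemma \ref{lemma_IMT} (there is no $\sigma' \in \sub(S)$ with $f_k <_3 \sigma'$, by the same $<_3$-antisymmetry argument via Corollary \ref{cor_cases}) then yields $f_k \in S$.

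Next I would check the inequalities for $E_1$ and $E_3$ against $S$. For $e \in E_1$: $e$ precedes every $f_k$ in the linear order, so by the second hypothesis $e <_3 f_k$ or $e$ and $f_k$ do not cross; in either case Theorem \ref{thm_level_d=2} gives $e \leq_3 T(f_k)$ for every $k$, and since $S \leq_{3} T(f_k)$ for all $k$ (meet is a lower bound) one checks via Lemma \ref{lemma_triangulation_simplex_d=2} that $e \leq_3 S$ as well — here I would use that $e \leq_3 T(f_k)$ for all $k$ translates to: for every crossing pair $(e', \cdot)$ the relevant $<_3$ holds in each $T(f_k)$, hence in $S$ whose submersion set is the intersection. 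For $e \in E_3$ the argument is the "dual": $e$ follows every $f_k$, so $f_k <_3 e$ or non-crossing, i.e. $e >_3 f_k$ or non-crossing; combined with the $\geq$-version of Theorem \ref{thm_level_d=2} / Lemma \ref{lemma_triangulation_simplex_d=2} this should give $e \geq_3 T(f_k)$ for each $k$, and then $e \geq_3 S$ since $S$ is a lower bound of the $T(f_k)$ — wait, that direction needs care. The main obstacle I anticipate is precisely the $E_3$ inequality: $S$ is the \emph{meet}, so it is automatically below each $T(f_k)$, which makes $e \leq_3 S$ for $E_1$ straightforward, but getting $e \geq_3 S$ for $E_3$ from $e \geq_3 T(f_k)$ is not automatic (being above each term of a meet does not obviously put you above the meet). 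I would resolve this by working at the level of submersion sets and Lemma \ref{lemma_triangulation_simplex_d=2}: $e \geq_3 S$ fails only if some edge $e' \in S$ crosses $e$ with $e <_3 e'$; but $e' \in S$ means $e' \in \sub(T(f_k))$ for every $k$ (for $2$-dim, edges in the triangulation are in the submersion set), and I would argue that such an $e'$ crossing $e$ with $e <_3 e'$ would, together with the position of $e$ relative to the $f_k$'s and the linear-order hypothesis, force a contradiction — most likely by exhibiting some $f_k$ with $f_k <_3 e'$ violating $e' \in T(f_k) \Rightarrow$ (nothing in $T(f_k)$ is strictly above $e'$ in the crossing sense is false, so instead) violating the defining property of $T(f_k)$ as the "top" triangulation through $f_k$. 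If this local argument stalls, the fallback is to instead define $S$ as a suitable \emph{join-then-meet} or to insert $S$ into a maximal chain of $\HST(n,2)$ and track heights directly along $C(n,2)$, using that $h_S \le h_{T(f_k)}$ pointwise and that $e \ge_3 T(f_k)$ gives $h_e \ge h_{T(f_k)} \ge h_S$ on $\conv(e)$ for at least one well-chosen $k$ — indeed, choosing the $T(f_k)$ with $f_k$ of smallest index among those whose defining polygon contains the relevant crossing region should suffice, which is the cleanest route and the one I would write up.
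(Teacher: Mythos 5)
Your treatment of $E_1$ and $E_2$ matches the paper's argument, but the $E_3$ step has a genuine gap, located exactly where you sensed trouble. The premise ``$e \geq_3 T(f_k)$ for each $k$'' is already false before any issue with the meet arises: Theorem~\ref{thm_level_d=2} produces a triangulation lying \emph{above} every simplex that does not overlap $f_k$ or lies below it, so for $e \in E_3$ that does not cross $f_k$ it forces $e \leq_3 T(f_k)$ --- the opposite direction. A concrete instance: $n=8$, $E_1=\emptyset$, $E_2=\{\{6,7\}\}$, $E_3=\{\{1,5\}\}$ (the two edges do not cross, so the order hypotheses hold). Any $T(\{6,7\})$ as in Theorem~\ref{thm_level_d=2} must satisfy $\{2,8\}\leq_3 T(\{6,7\})$, which by Lemma~\ref{lemma_triangulation_simplex_simple} and Corollary~\ref{cor_cases} rules out $\{1,5\}\in T(\{6,7\})$ (since $\{1,5\}<_3\{2,8\}$); together with the guaranteed $\{1,5\}\leq_3 T(\{6,7\})$ this makes $\{1,5\}\geq_3 T(\{6,7\})$ impossible. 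So with $S$ taken as the meet over $E_2$ only, the conclusion for $E_3$ fails outright; no submersion-set argument, and no choice of index $k$ in your fallback, can rescue a false statement.

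The repair --- and this is what the paper does --- is to enlarge the meet. Construct $T_i=T(e_i)$ for \emph{every} $e_i\in E_1\cup E_2\cup E_3$ in the given linear order (so $e_j\leq_3 T_i$ for all $j<i$), let $e_r$ be the first edge of $E_2$, and set $T=\bigwedge_{i=r}^{m}T_i$; that is, the triangulations through the $E_3$ edges are included in the meet. Your arguments for $E_1$ and $E_2$ go through unchanged for this larger meet: for $e_i\in E_2$ and $j>i$ with $e_j\in E_3$ one still has $e_i<_3 e_j$ or non-crossing, hence $e_i\in\sub(T_j)$, so $e_i\in\sub(T)$ and Lemma~\ref{lemma_IMT} applies as you wrote. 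The $E_3$ inequality then becomes the \emph{easy} direction of the meet: $T\leq_3 T_i$ because $T_i$ is one of its terms, and since $e_i\in T_i$ we get $h_T\leq h_{T_i}=h_{e_i}$ on $\conv(e_i)$, i.e.\ $T\leq_3 e_i$. No local crossing analysis is needed for $E_3$ at all. (Your separate handling of $E_2=\emptyset$ is also unnecessary once the meet is taken over the $E_3$ triangulations, though that is a minor point by comparison.)
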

The conditions in the corollary imply the following 
(forbidden) interlacing patterns: (i) edges in $E_2$ are never 4-interlacing, and (ii) for $1\le j<j'\leq 3$, if $e=\{x<y\}\in E_j$ and $e'=\{x'<y'\}\in E_{j'}$ are 4-interlacing  then $x<x'<y<y'$. Conversely, if (i) and (ii) hold then the existence of a linear order on $E_1\cup E_2\cup E_3$ as specified in the corollary is guaranteed: indeed, in each $E_i$ consider a linear order $\preceq_3^i$ that extends $\preceq_3$ on $E_i$, and concatenate these orders so that $\preceq_3^1$ comes first, $\preceq_3^2$ second and $\preceq_3^3$ third, to obtain the desired linear order on 
$E_1\cup E_2\cup E_3$. This is used in the proof of Proposition \ref{prop_only_LR} below.
\begin{proof}
Assume that all $E_i$ are non-empty; the other cases can be argued similarly. By Theorem \ref{thm_level_d=2}, we obtain a triangulation $T_i=T(e_i)$ 
of $C(n,2)$
for every $i \in [m]$ so that $e_i\in T_i$ and $e_j \leq_3 T_i$ for $1 \leq j<i\leq m$. Let $e_r$ be the first edge of $E_2$ in the linear ordering, and let $T=\bigwedge_{i=r}^m T_i$. Using a similar argument as in the proof of Theorem \ref{thm_main}(i) we obtain $e_r \in T$. Not only that, for every $e_i \in E_2$ we have $e_i \in T$: since $e_i$ does not cross any other $e_j \in E_2$ we have $e_i \leq_3 T_j=T(e_j)$, or equivalently $e_i \in \sub(T_j)$ by Theorem \ref{thm_level_d=2}. Thus as in the proof of Theorem \ref{thm_main}(i) we conclude that $e_i \in \sub(T)$ by (\ref{eq_lattice}) (using Lemma \ref{lemma_lattice}) but $e_i$ does not have any other $e' \in \sub(T)$ with $e_i<_3 e'$ (since $e_i \in T_i$ and $e' \in \sub(T_i)$), which implies $e_i \in T$ (using Lemma \ref{lemma_IMT}). 

By (\ref{eq_lattice})  we similarly obtain that every $e_i$ with $i<r$, or equivalently every $e_i \in E_1$, is in $\sub(T)$, or equivalently $e_i \leq_3 T$. For an index $i$ with 
$e_i \in E_3$, there holds $T\leq_3 T_i$, for $T$ being the meet of $T_i$ and other triangulations.
Since $e_i \in T_i$, we have $T \leq_3 e_i$. 
\end{proof}

Theorem~\ref{thm_level_d=2} and its proof implies that the join of all the triangulations in the lattice $\HST(n,2)$ which contain $\sigma$ again contains $\sigma$ -- indeed, this join equals $T(\sigma)$ in the proof of Theorem~\ref{thm_level_d=2}. However, this is not the case for $\HST(n,3)$, which leads us to take a more dynamic approach towards Theorem \ref{thm_level_d=3}.

\begin{eg}
In $\HST(7,3)$, the following two triangulations $T$ and $T'$ are maximal among all triangulations in $S(7,3)$ which contain the simplex $\sigma=\{2,5,6,7\}$, see \cite[Figure 4(b)]{edelman_cyclic_triangulation_envelope}:
\begin{align*}
    T&=\{\{1,2,3,5\}, \{1,2,5,6\}, \{1,2,6,7\}, \{1,3,4,5\}, \{2,3,5,7\}, \{2,5,6,7\}, \{3,4,5,7\}\}\\
    T'&=\{\{1,2,3,4\}, \{1,2,4,5\}, \{1,2,5,6\}, \{1,2,6,7\}, \{2,3,4,7\}, \{2,4,5,7\}, \{2,5,6,7\}\}
\end{align*}
\end{eg}

\subsection{Proof of Theorem \ref{thm_level_d=3}} \label{subsec_d=4}
Now we start the proof of Theorem \ref{thm_level_d=3}. In the proof we use combinatorial Lemma \ref{lemma_LMR_LMRT}, but its statement and proof are a bit involved. Thus we postpone them to the next Subsection \ref{subsec_combinatorial_lemma}, and rather give a clear outline of the proof of the theorem first by temporarily assuming Lemma \ref{lemma_LMR_LMRT}.

Let $\sigma, \tau_1, \dots, \tau_m \subseteq  \gamma_3$ be the given triangles from Theorem~\ref{thm_level_d=3}. Recall that they are all subsets of $[n]$. We may assume $n \geq 4$. Let $\sigma=\{v_1<v_2<v_3\}$.
Here is a restatement of Lemma \ref{lemma_triangulation_simplex} for $d=3$, which appeared already in \cite[Proposition 4.1]{edelman_cyclic_triangulation_envelope}.

\begin{lemma} \label{lemma_triangulation_simplex_d=3}
	For a triangulation $T \in S(n,3)$ and a triangle $\tau  \subseteq \gamma_3$, there holds: $\tau \leq_4 T$ if and only if there is no edge $e' \in T$ such that $e'<_4 \tau$, equivalently, such that $e'$ and $\tau$ are $5$-interlacing.
\end{lemma}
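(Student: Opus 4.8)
The plan is to read off Lemma \ref{lemma_triangulation_simplex_d=3} as the $d=3$ instance of Lemma \ref{lemma_triangulation_simplex}, applied with the free simplex there taken to be our triangle $\tau$, and then to rephrase the resulting condition in interlacing language via Propositions \ref{prop_overlap} and \ref{prop_height_interlacing} and Corollary \ref{cor_cases}.

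First I would invoke Lemma \ref{lemma_triangulation_simplex} with $d=3$ and with its free simplex equal to $\tau$. Since $\lceil 3/2\rceil+1=3$ and $\lfloor 3/2\rfloor+1=2$, that lemma asserts that $\tau\le_4 T$ holds if and only if there is no subset $\sigma'\subseteq\tau$ with $|\sigma'|=3$ and no face $e'\in T$ with $|e'|=2$ such that $e'<_4\sigma'$. But $\tau$ has exactly three vertices, so its only size-$3$ subset is $\tau$ itself, and the condition collapses to: there is no edge $e'\in T$ with $e'<_4\tau$. This is the first equivalence.

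For the second equivalence I would show that, for an edge $e'$ and a triangle $\tau$ on $\gamma_3$, the relation $e'<_4\tau$ is equivalent to $e'$ and $\tau$ merely being $5$-interlacing, with no constraint on the alternation pattern. Since $|e'|=2$, the simplices $e'$ and $\tau$ can never be $6$-interlacing, so by Corollary \ref{cor_cases} exactly one of the following holds: they are not $5$-interlacing; $e'<_4\tau$; or $e'>_4\tau$. Now $e'>_4\tau$ means $\tau<_4 e'$, and by Proposition \ref{prop_height_interlacing} (with $d=3$, odd) this would require a $5$-interlacing sequence beginning with a vertex of $e'$, hence with vertices of $e'$ occupying the odd positions $1,3,5$; that is impossible as $|e'|=2$. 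Hence whenever $e'$ and $\tau$ are $5$-interlacing we are necessarily in the case $e'<_4\tau$, while conversely $e'<_4\tau$ forces $5$-interlacing by Propositions \ref{prop_overlap} and \ref{prop_height_interlacing}. Combining this with the previous paragraph completes the proof.

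I do not anticipate a genuine obstacle: the lemma is essentially an unwinding of Lemma \ref{lemma_triangulation_simplex}, and the single point needing a line of care --- and the reason the clean phrasing ``$e'$ and $\tau$ are $5$-interlacing'' is legitimate --- is the counting observation that a length-$5$ alternating sequence between an edge and a triangle is automatically of the type that corresponds to $e'<_4\tau$ rather than to $e'>_4\tau$.
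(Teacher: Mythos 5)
Your proposal is correct and follows exactly the route the paper intends: the paper states this lemma as a restatement of Lemma~\ref{lemma_triangulation_simplex} for $d=3$ without further proof, and your unwinding (the only size-$3$ subset of a triangle is the triangle itself) together with the counting observation that an edge cannot occupy three odd positions of a $5$-interlacing sequence, via Proposition~\ref{prop_height_interlacing} and Corollary~\ref{cor_cases}, is precisely the justification for the ``equivalently'' clause.
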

By Lemma \ref{lemma_triangulation_simplex_d=3}, 
 it is enough to find a triangulation $T$ which contains $\sigma$ and has no edges which are $5$-interlacing with any $\tau_i$.

\smallskip

We introduce some conventions which are useful in further discussion. For a vertex subset $V \subseteq \gamma_d$, $|V|\geq d+1$, let $S(V,d)$ be the set of triangulations of the $d$-dimensional cyclic polytope $C(V,d):=\conv(V)$ (without adding new vertices) and let $\HST(V,d)$ be the higher Stasheff-Tamari poset on $S(V,d)$. For simplicity of notations, if there is no confusion, we often identify $V \subseteq \gamma_d$ with its projection on $\gamma_{d'}$ or lifting on $\gamma_{d''}$ for $d'<d<d''$. 
So we may use notations like $S(V,d+1)$ or $S(V,d-1)$ even when $V \subseteq \gamma_d$. 

For a simplicial polytope $P$ and a point $q$ outside $P$ in $\RR^d$ 
such that $\{q\}\cup V(P)$ is in general position,
we say that a face $\tau$ (as a vertex subset) of $P$  \textit{is visible from $q$} if $\conv(\tau \cup \{q\})$ does not intersect the interior of $P$. For a tringulation of $T$ of $P$ (without adding new vertices), the \textit{cone of $T$ over $q$}, denoted by $\cone(T,q)$, is the triangulation of $\conv(P \cup \{q\})$ given by the set of faces 
\[T\cup \{\tau\cup \{q\}: \textrm{$\tau$ is a face of $P$ visible from $q$}    \}.\]

We make some simplifications of our setting. Using the coning operation, we may assume that $v_1=1$. Indeed when $v_1\geq 2$, if $T^1$ is a triangulation on $[v_1,n]$ with the desired properties, then we recursively define $T^i=\cone(T^{i-1}, q_i)$ where $q_i=v_1-i+1$  for $i \in [2, v_i]$. Whenever we cone $T^{i-1}$ over $q_i$, the newly added edges to $T^i$ are exactly the 2-subsets of the form $q_iw$ with $q_i<w\leq n$, since they appear as an edge of the cyclic polytope $P^i=C([q_i, n], 3)$ by Proposition \ref{prop_gale}.
These edges cannot be 5-interlacing with given triangles $\tau_i \subseteq [v_1, n]$ for every $i\in [m]$ (if $m>0$).
This argument also applies for the degenerate case when $|[v_1,n]|=3$, where $m=0$ by the assumptions.
Thus, w.l.o.g. $v_1=1$ and $|[v_1,n]| \geq 4$.

We define four intervals as follows.
\begin{align*}
J_L=&\{i\in [n] : i<v_2 \}\ne \emptyset, &J_R=\{i\in [n] : i>v_3\},\\
I_M=&\{i\in [n] : v_2<i<v_3\}, \textrm{ and } &J_M=I_M\cup \{v_2, v_3\}\ne\emptyset.
\end{align*}
Note that $I_M$ or $J_R$ might be empty. See Figure \ref{fig:4-dim}. 

				\begin{figure}[ht]
	\centering
	\includegraphics[totalheight=2.8cm]{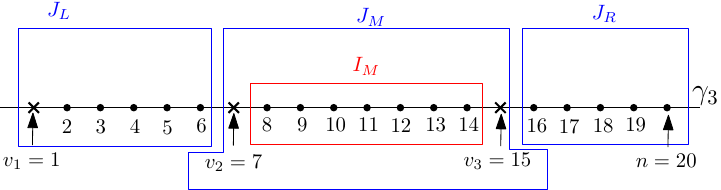}
	\caption{Illustrations of $\sigma=\{v_1, v_2, v_3\}$ and the intervals $J_L$, $J_R$ and $I_M\subset J_M$. 
	}
	\label{fig:4-dim}
\end{figure}

\begin{proof}[Proof of Theorem \ref{thm_level_d=3}] The proof proceeds in several steps. 

\smallskip 
	
\textbf{1. Initial triangulation.} 
For a vertex set $W \subseteq \gamma_3$ with $|W|\geq 4$, let us denote the maximum triangulation of $\HST(W,3)$ by $T_M^W$. Then $T_M^W$ is the projection of the upper envelope of the cyclic polytope on $\gamma_4$ on the vertices lifted from $W \subseteq \gamma_3$. 

Let $V_0=[n]\setminus I_M$. When $V_0=\sigma=\{1=v_1< v_2< v_3\}$, we have $v_3=n$ and $v_2=v_1+1$, so Proposition \ref{prop_gale} implies $\sigma$ is a face of the cyclic polytope $C(n,3)$. Hence $\sigma$ is a face of any triangulation in $S(n,3)$, and in particular a face of $T_M^{[n]}$. Since the lifting of $T_M^{[n]}$ is the upper envelope of $C(n,4)$ we have $\tau_i \leq_4 T_M^{[n]}$ for every $i\in [m]$. Therefore $T_M^{[n]}$ is a desired triangulation for this case.

Hence we may assume that $|V_0| \geq 4$. 
Let $T_0=T_M^{V_0}$. By Proposition \ref{prop_gale}, the facets of $T_0$ are given as
\[\{\{1,n\}*\{i,s(i)\}: i \in V_0\setminus \{1,n, \max(V_0\setminus \{1,n\})\}\},\]
where $s(i)$ is the immediate successor of $i$ in $V_0$ with respect to $<$.
Since $v_1=1$ and $v_3=s(v_2)$, the above characterization implies $\sigma \in T_0$. From the above description of the facets of $T_0$ one easily checks that
the set of edges of $T_0$ is 
\begin{equation}\label{eq_edges_primary_tri}
\{\{i,n\}, \{1,j\}, \{k,s(k)\}: i\in V_0\setminus \{n\}, j \in V_0\setminus \{1\}, k\in V_0\setminus \{1,n\}\} \}.
\end{equation}
None of the edges in (\ref{eq_edges_primary_tri})
is $5$-interlacing with any triangle $\tau_i$ when $I_M$ is empty, so we can take $T=T_0$ 
in the assertion of the theorem in this case. 

Thus, we may assume that $l:=|I_M|>0$. Still the edges other  than $\{v_2,v_3\}\subseteq J_M$ cannot 5-interlace any of the given triangles $\tau_i$.

\smallskip 

\textbf{2. New edges after inductive coning.}
We will later find a certain total order on $I_M$ and use it to find a desirable triangulation. Here we first see how this order is used to construct a triangulation; suppose that all the $l$ elements of $I_M$ are ordered as $q_1, q_2, \dots, q_l$. For $i\in [l]$, let $V_i=V_{i-1}\cup \{q_i\}$
and $T_i=\cone(T_{i-1}, q_i) \in S(V_i,3)$.

The edges newly added at the $i$th step to form $T_i$ are exactly $\{1, q_i\}$, $\{p(q_i), q_i\}$, $\{q_i, s(q_i)\}$ and $\{q_i, n\}$ where $p(q_i)$ and $s(q_i)$ are the immediate predecessor and successor of $q_i$ in $V_i$, respectively. These edges are exactly the edges of the cyclic polytope $P_i=\conv(V_i)$  which are not in $T_{i-1}$ (by Proposition \ref{prop_gale}); the other edges among $q_iw$ for $w\in V_{i-1}$ do 5-interlace, or overlap in $\RR^3$, some facet of $P_{i-1}$, so the characterization of the new edges holds.
Of course the edges $\{1, q_i\}$ and $\{q_i, n\}$ can never be $5$-interlacing with any triangles on $[n]$. 
Thus, also after all inductive conings by $q_1,\ldots,q_l$, the edges of the resulting triangulation which 
potentially 5-interlace some triangle $\tau_i$ are just all the 2-subsets of $J_M$ (including $v_2v_3$), so we only need to consider them.

In fact, it is easily seen that these new edges together with $v_2v_3$ form a 2-dimensional triangulation on $J_M$, that is, an element of $S(J_M,2)$. 
The other direction also holds easily as follows.

\begin{claim}\label{claim_cone_edge}
Let $l=|I_M|>0$. 
Then for every trinangulation $T \in S(J_M, 2)$, there exists an ordering $q_1, \dots, q_l$ on $I_M$ such that for the resulting triangulation $\widetilde T=T_l\in S(V,3)$ after inductive conings following this order starting from $T_0$, the edges of $\widetilde T$ contained in $J_M$ are exactly the edges of $T$.
\end{claim}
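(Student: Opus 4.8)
\textbf{Proof plan for Claim \ref{claim_cone_edge}.}
The goal is to reverse-engineer the coning procedure: given a target $2$-dimensional triangulation $T\in S(J_M,2)$, produce an insertion order $q_1,\dots,q_l$ of the points of $I_M$ so that the inductive conings from $T_0$ re-create exactly the edges of $T$ inside $J_M$. The plan is to build this order by peeling off vertices of $T$ one at a time in a way that mirrors coning in reverse. Recall from Step 2 that each coning step over $q_i$ adds, among the $2$-subsets of $J_M$, exactly the two ``short'' edges $\{p(q_i),q_i\}$ and $\{q_i,s(q_i)\}$ where $p(q_i),s(q_i)$ are the neighbours of $q_i$ in the current vertex set $V_i\cap J_M$; all other potentially-interlacing edges $\{1,q_i\}$ and $\{q_i,n\}$ are harmless. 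So inside $J_M$, after inserting $q_1,\dots,q_i$ the edge set is precisely the ``path-fan'' triangulation refined step by step. Equivalently, inserting $I_M$ one point at a time into the interval $[v_2,v_3]$ builds up an arbitrary triangulation of the polygon on $J_M$ whose boundary edge $\{v_2,v_3\}$ is present throughout — and conversely every triangulation of that polygon arises this way.

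First I would set up the correspondence precisely. The edges of $\widetilde T=T_l$ lying in $J_M$ form a triangulation of the convex polygon $\conv(J_M)$ (in the $\gamma_2$-picture, i.e. after projecting two dimensions down), containing the boundary edge $v_2v_3$; call it $T^{(l)}$. I claim that the operation ``cone $T_{i-1}$ over $q_i$, then restrict to $J_M$'' corresponds, on the level of the polygon $\conv(V_{i-1}\cap J_M)$, to inserting the new vertex $q_i$ and joining it to its two cyclic neighbours among $V_{i-1}\cap J_M$ together with all vertices of that polygon visible from $q_i$ — which, since $q_i$ lies between consecutive points of $V_{i-1}$ on $\gamma_2$, means exactly the triangle $\{p(q_i),q_i,s(q_i)\}$ is cut off and re-triangulated. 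This is the standard ``pushing'' / placing operation for point insertions in cyclic polytopes; it follows from Proposition \ref{prop_gale} applied to $P_i=\conv(V_i)$ exactly as in Step 2, plus the observation that the link of $q_i$ inside $J_M$ in the coned triangulation is the single edge $\{p(q_i),s(q_i)\}$.

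Next I would run the induction on $l=|I_M|$ to produce the order. Given the target $T\in S(J_M,2)$, pick any vertex $q\in I_M$ that is an ``ear'' of $T$, i.e. $q$ has degree $2$ in $T$, so that $\{p,q,s\}$ is a triangle of $T$ with $p,s$ the neighbours of $q$ and $ps$ an edge of $T$ (such an ear with apex in $I_M$ exists: a triangulated polygon on $\ge 4$ vertices has at least two ears, and $\{v_2,v_3\}$ is a boundary edge so at most one ear can be incident to both $v_2$ and $v_3$; hence some ear has its apex in the interior set $I_M$ — more carefully, if $l=1$ the unique interior point is the ear, and if $l\ge 2$ delete any ear at $v_2$ or $v_3$ if present and argue on the smaller polygon). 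Set $q_l:=q$, let $T':=T\setminus\{q\}$ viewed as a triangulation of the polygon on $J_M\setminus\{q\}$ (it still contains $v_2v_3$), and recurse to get $q_1,\dots,q_{l-1}$ for $T'$. By the induction hypothesis the conings by $q_1,\dots,q_{l-1}$ from $T_0$ give a triangulation whose edges inside $J_M$ are exactly those of $T'$; then coning by $q_l=q$ cuts the segment $p(q)s(q)=ps$ and inserts the triangle $\{p,q,s\}$, whose edges inside $J_M$ are precisely $T'\cup\{pq,qs\}=T$ by the correspondence of the previous paragraph. This closes the induction.

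The main obstacle I anticipate is the bookkeeping in the ``reverse coning'' step: one must verify that when $q_i$ is the last inserted vertex, its two neighbours $p(q_i),s(q_i)$ in $V_i\cap J_M$ are exactly the neighbours of $q_i$ in the polygon triangulation $T'$ restricted to the vertices inserted so far, and that no spurious $2$-subset of $J_M$ other than $\{p(q_i),q_i\},\{q_i,s(q_i)\}$ enters — this is where Proposition \ref{prop_gale} (characterising edges of $P_i$) and the visibility description of $\cone(\cdot,q_i)$ do the work, and it is precisely the content already sketched in Step 2, so the claim is really a matter of assembling those ingredients cleanly together with the existence of an interior ear. The remaining subtlety, handling the degenerate shapes of the polygon (e.g. when $J_R=\emptyset$ or when $I_M$ is a single point), is routine and can be dispatched by inspection.
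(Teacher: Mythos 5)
Your proposal is correct and takes essentially the same approach as the paper: the paper orders $I_M$ by a linear extension of the partial order coming from the dual tree of $T$ rooted at the triangle on $v_2v_3$ (a stacking order of the triangles), and your reverse ear-peeling induction produces exactly such an order read backwards, since removing an ear is removing a leaf of that dual tree. The only cosmetic differences are the direction of the construction (forward stacking versus backward peeling) and your explicit check that an ear with apex in $I_M$ exists, which the paper leaves implicit.
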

\begin{proof}
Both $T$ and $T_0$ contain the edge $v_2v_3$, and the outerplanar triangulation $T$ has a unique triangle $t_1$ containing the edge $v_2v_3$, call $q_1$ its third vertex.
Denote by $T^*$ the directed tree dual to $T$, rooted at the triangle $t_1$ incident with $v_2v_3$. The vertices of $T^*$ are the triangles of $T$. For every vertex $t$ of $T^*$, we have a unique directed path from $t_1$ to $t$ in $T^*$, $t_1 \rightarrow t_2 \rightarrow \cdots \rightarrow t_s=t$, and there is the unique vertex of $T$, denote it $p_t$, which never appeared in the previous triangles $t_1,\ldots, t_{s-1}$ but belongs to $t$. In this way the vertices of $I_M$ are in 1-1 correspondence with the triangles of $T$. We define a partial order $\preceq$ on $I_M$ such that $p_{t} \preceq p_{t'}$ when there is a directed path from $t$ to $t'$ in $T^*$. Fix a linear extension of $\preceq$, $q_1, \dots, q_l$ so that for $i<j$, either $q_i \preceq q_j$ or $q_i$ are incomparable with respect to $\preceq$. This order respects the order of triangles when constructing $T$ by stacking. 
Thus, this order on $I_M$ gives a triangulation $\widetilde T$ where its new edges that are contained in $J_M$ are exactly the edges of $T$ except for $v_2v_3$.
\end{proof}

\smallskip

\textbf{3. Special triangles and edges.} 
We are left to find a triangulation $T \in S(J_M,2)$ where none of its edges is 5-interlacing with some $\tau_i$ for $i\in [m]$. Once this is achieved, we construct $\widetilde T \in S(n,3)$ from the initial triangulation $T_0$ by inductive coning following the order given by Claim \ref{claim_cone_edge} using $T$. Claim \ref{claim_cone_edge} implies that $\widetilde T$ has no edges $5$-interlacing with any triangle $\tau_i$. Therefore we have $\tau_i \leq_{4} \widetilde T$ for all $i\in [m]$ by Lemma \ref{lemma_triangulation_simplex_d=3}. Note also that $\sigma\in \widetilde{T}$. So $\widetilde{T}$ is as desired in the theorem and the proof concludes.

Let us consider what happens when an edge $e'=\{x_1<x_2\}$ in a triangulation $T' \in S(J_M,2)$ and a triangle $\tau=\tau_i=\{y_1<y_2<y_3\}\subseteq [n]$ (for some $i\in [m]$) are $5$-interlacing, namely $y_1<x_1<y_2<x_2<y_3$. In particular, this implies that $y_2 \in I_M$, hence $|\tau\cap J_M| \geq 1$. 
We argue by the size of $\tau\cap J_M$.

Suppose $|\tau\cap J_M| = 1$. Then $y_1 \in J_L$ and $y_3 \in J_R$. We have $y_1=1$, which implies $\min(\tau)=y_1=1=v_1=\min(\sigma)$, as otherwise $\tau$ and $\sigma$ would be $6$-interlacing which is forbidden by the non-overlapping assumption and Proposition \ref{prop_overlap}. But then $y_3 \in J_R$ yields a contradiction by the assumption $\max(\tau)=y_3 \leq \max(\sigma)=v_3$. 
Hence $|\tau \cap J_M|=2$ or $3$, and $\tau$ must be of exactly one of the following three types:

\begin{itemize}
	\item[(L)] $|\tau \cap J_L|=1$.
	\item[(R)] $|\tau \cap J_R|=1$.
	\item[(M)] $\tau \subseteq J_M$.
\end{itemize}
Let $L$ (resp. $R$) be the set of all edges of the form $\tau \cap J_M$ for $\tau \in \{\tau_1, \dots, \tau_m\}$ of type (L) (resp. (R)). Let $M$ be the set of all triangles $\tau$ of type (M) among $\tau_1, \dots, \tau_m$.

\begin{claim} \label{claim_LMR}
The sets $L$, $R$ and $M$ satisfy Conditions (LR), (LMR) and (MM) from Subsection \ref{subsec_combinatorial_lemma} with the consistent notations.
\end{claim}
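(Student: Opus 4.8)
The goal is to verify that the triple $(L,R,M)$ extracted from the triangles $\tau_1,\dots,\tau_m$ satisfies the three combinatorial hypotheses (LR), (LMR), (MM) that Lemma~\ref{lemma_LMR_LMRT} will require of its input. The plan is to translate each of these conditions back into a statement about $5$-interlacing or $6$-interlacing patterns among the $\tau_i$ and among $\tau_i$ together with $\sigma$, and then to derive the needed pattern from the standing hypothesis that the liftings $\hat\tau_1,\dots,\hat\tau_m$ are pairwise non-overlapping in $\RR^4$ (equivalently, by Proposition~\ref{prop_overlap}, the $\tau_i$ are pairwise not $6$-interlacing in $\RR^3$) and that each $\tau_i$ does not overlap $\sigma$ (so $\sigma,\tau_i$ are not $6$-interlacing), together with the normalization $v_1=1$ and the two minimum/maximum assumptions on $\sigma$ versus the $\tau_i$.

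First I would set up the dictionary. An edge in $L$ has the form $\tau\cap J_M=\{a<b\}$ where $\tau=\{1<a<b\}$ is of type (L) with $1\in J_L$, $a,b\in J_M$ (and by the type-(L) analysis $a=v_2$ forces nothing special, but $b$ may be $v_2$, an interior point of $I_M$, or $v_3$); similarly an edge in $R$ comes from $\tau=\{a<b<y_3\}$ with $y_3\in J_R$, $a,b\in J_M$; and a triangle in $M$ is a $\tau_i$ entirely inside $J_M$. I would then check each condition separately. For (MM), the triangles of $M$ live inside the $2$-dimensional set $J_M$ after projection, and I claim they are pairwise non-crossing there: if two of them crossed in $\RR^2$ they would be $4$-interlacing inside $J_M$, which (adding back the suppressed coordinates, i.e.\ viewing them on $\gamma_3$) means they are $(3+3)=6$-interlacing in $\RR^3$, contradicting the non-overlapping-lifting hypothesis. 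For (LR) — the mixed $L$/$R$ condition — I would take $e=\{x<y\}\in L$ coming from $\tau$ and $e'=\{x'<y'\}\in R$ coming from $\tau'$; if $e,e'$ are $4$-interlacing in the wrong pattern, I would show that prepending the $J_L$-vertex $1$ of $\tau$ and appending the $J_R$-vertex of $\tau'$ produces a $6$-interlacing of $\tau$ with $\tau'$ (using that $1<x,y$ and that the $J_R$-vertex exceeds $x',y'\in J_M$), again contradicting non-overlap of the liftings. For (LMR), which presumably couples an $L$-edge (or $R$-edge) with an $M$-triangle, the same mechanism applies: an $L$-edge is the "$J_M$-shadow" of a type-(L) triangle whose extra vertex is $1$, so a forbidden interlacing of that edge with an $M$-triangle lifts to a $6$-interlacing of two of the $\tau_i$; the min/max hypotheses on $\sigma$ are what rule out the degenerate sub-case $y_1\neq 1$ that already appeared in the text's type analysis.

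The one genuinely delicate point — and where I expect to spend the most care — is bookkeeping the boundary vertices $v_2$ and $v_3$: since $v_2,v_3\in J_M$ but are also the "endpoints" glued to $J_L$ and $J_R$, an edge of $L$ or $R$ might use $v_2$ or $v_3$ as one of its vertices, and one must make sure the interlacing argument still goes through (i.e.\ that the added outer vertex $1$ or the $J_R$-vertex is still strictly separated from $v_2,v_3$ in the relevant order, which it is because $1=v_1<v_2$ and every element of $J_R$ exceeds $v_3$). I would handle this by being explicit that in every invocation the outer vertex we prepend/append is strictly smaller than $\min(J_M)$ or strictly larger than $\max(J_M)$ respectively, so it can be inserted at the extreme end of any alternating sequence supported on $J_M$ without disturbing the alternation. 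Once the dictionary and these endpoint checks are in place, Conditions (LR), (LMR), (MM) follow immediately by contraposition from Propositions~\ref{prop_overlap} and~\ref{prop_height_interlacing} together with the hypotheses of Theorem~\ref{thm_level_d=3}, and the claim is proved.
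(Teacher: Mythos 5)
Your proposal is essentially the paper's argument: the paper's entire proof of this claim is the one-line observation that each of (LR), (LMR), (MM) is an instance of the pairwise non-$6$-interlacing hypothesis, and your (LR) and (LMR) paragraphs carry this out correctly — prepending the $J_L$-vertex of a type-(L) triangle (which lies below $\min(J_M)=v_2$) or appending the $J_R$-vertex of a type-(R) triangle (which lies above $\max(J_M)=v_3$) upgrades the forbidden $4$- or $5$-interlacing pattern to a $6$-interlacing of the corresponding $\tau_i$'s, contradicting the non-overlapping-liftings hypothesis. Two small corrections: a type-(L) triangle's vertex in $J_L$ need not equal $1$ (only the $|\tau\cap J_M|=1$ analysis forces that), though your argument only ever uses that this vertex is smaller than $v_2$, so nothing breaks; and your justification of (MM) via ``pairwise non-crossing in $\RR^2$'' is a non sequitur — $4$-interlacing inside $J_M$ does not imply $6$-interlacing, and (MM) neither asserts nor needs non-crossing, being verbatim the non-$6$-interlacing hypothesis restricted to the triangles contained in $J_M$.
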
 
\begin{proof}
This is clear as no two triangles among 
$\tau_1, \dots, \tau_m$ are 6-interlacing.
\end{proof}
By Claim \ref{claim_LMR}, we can apply Lemma \ref{lemma_LMR_LMRT}, given in the next Subsection \ref{subsec_combinatorial_lemma}, by setting $V=J_M$ 
to obtain a triangulation $T \in S(J_M,2)$ such that each edge of $T$ satisfies Conditions (Le), (Re) and (Me) from Subsection \ref{subsec_combinatorial_lemma} with respect to the sets $L$, $M$ and $R$ defined above. 
These conditions exactly correspond to the non-5-interlacing assumption between the triangles $\tau_1, \dots, \tau_m$ and the edges of $T$. 
From such $T\in S(J_M,2)$ we construct $\Tilde{T}\in S(V,3)$ as desired, which completes the proof of the theorem.
\end{proof}

\subsection{Finding a 2-dimensional triangulation avoiding certain interlacing patterns} \label{subsec_combinatorial_lemma}
This subsection is devoted to the proof of Lemma \ref{lemma_LMR_LMRT} below, a crucial lemma in finishing the proof of Theorem \ref{thm_level_d=3}. It is a purely combinatorial claim.
Its proof is a bit involved, 
however, the proof itself is independent from the rest of this paper.

\smallskip

Let $L$ and $R$ be two sets of edges on the 2-dimensional moment curve $\gamma_2$, and $M$ be a set of triangles on $\gamma_2$. Again using conventions from Section \ref{sec_prelim}, following the natural linear order from $\gamma_2$ we identify an edge or a triangle from $L\cup M \cup R$ as a subset of $[n]$ for some  positive integer $n$. Note that the sets $L$ and $R$ might intersect, and any set among $L$, $M$ and $R$ might be empty. 

Regarding these $L$, $M$ and $R$, we consider the following forbidden interlacing conditions:

\begin{itemize}
\item[(LR)] No edges $\{l_1 < l_2\}$ of $L$ and $\{r_1<r_2\}$ of $R$ are 4-interlacing starting with $r_1$, i.e., $r_1<l_1<r_2<l_2$ never holds. 

\item[(LMR)] For every edge $\{v_1<v_2\}$ of $L \cup R$ and every triangle $\{w_1<w_2<w_3\}$ of $M$, they are not 5-interlacing, i.e., $w_1<v_1<w_2<v_2<w_3$ never holds. 

\item[(MM)]  For every pair of triangles $\{v_1< v_2< v_3\}$ and $\{w_1<w_2<w_3\}$ of $M$, they are not 6-interlacing, i.e., neither $v_1<w_1< v_2<w_2< v_3<w_3$ nor $w_1<v_1< w_2<v_2< w_3<v_3$ holds. 
\end{itemize}

For an edge $e=\{v_1<v_2\} \subseteq \gamma_2$ (as a subset of $[n]$), we define the following 
forbidden interlacing 
conditions w.r.t. the sets $L$, $M$ and $R$. Eventually, 
we will need to find a triangulation of the polygon $C(n,2)$ so that each edge of the triangulation satisfies the following three conditions; achieved in Lemma~\ref{lemma_LMR_LMRT}.

\begin{itemize}
\item[(Le)]  No edge $\{l_1<l_2\}$ of $L$ is 4-interlacing with $e$ starting with $v_1$, i.e., $v_1<l_1<v_2<l_2$ never holds. 

\item[(Me)]  No triangle $\{w_1<w_2<w_3\} \in M$ is 5-interlacing with $e$, i.e., $w_1<v_1<w_2<v_2<w_3$ never holds. 

\item[(Re)]  No edge $\{r_1<r_2\}$ of $R$ is 4-interlacing with $e$ starting with $r_1$, i.e., $r_1<v_1<r_2<v_2$ never holds. 
\end{itemize}

\begin{lemma} \label{lemma_LMR_LMRT}
Let $L$, $M$ and $R$ be finite families of edges or triangles on $\gamma_2$ given as above, satisfying (LR), (LMR), and (MM). Suppose there is a vertex subset $V \subseteq [n]$ on $\gamma_2$ with $|V|\geq 3$ such that each edge of the (convex) polygon $P=\conv(V)$ satisfies (Le), (Me) and (Re). Then we can find a triangulation $T$ of $P$ such that each edge of $T$ satisfies (Le), (Me) and (Re).
\end{lemma}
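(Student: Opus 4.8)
The plan is to construct the triangulation $T$ of $P=\conv(V)$ greedily, processing the vertices of $V$ in a carefully chosen order and, at each step, ``cutting off'' an ear of the current polygon so that the new diagonal satisfies (Le), (Me), (Re). Equivalently, I would build $T$ by repeatedly removing a vertex $v$ of the current sub-polygon $P'$ (on vertex set $V'\subseteq V$) that is \emph{not} an endpoint of any constraining configuration pointing ``into'' $P'$, adding the triangle at $v$ and the diagonal joining its two neighbours in $P'$, and recursing on $P'\setminus\{v\}$. The key point to isolate is: which vertex is safe to remove? A diagonal $e=\{x<y\}$ of $P$ (with no vertex of $V$ strictly between $x$ and $y$ along the boundary arc on the removed side) violates (Re) iff some $\{r_1<r_2\}\in R$ has $r_1<x<r_2<y$; violates (Le) iff some $\{l_1<l_2\}\in L$ has $x<l_1<y<l_2$; violates (Me) iff some $\{w_1<w_2<w_3\}\in M$ has $w_1<x<w_2<y<w_3$. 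So the ``danger'' to a candidate diagonal at $v$ with neighbours $a<v<b$ in $V'$ is an $R$-edge straddling $a$ and landing between $a$ and $b$ but after $v$ wait — I must be careful with which neighbours; the safe choice should be governed by choosing $v$ extremal with respect to a suitable order.

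\textbf{Main steps.} First, I would reduce to the case $V=[n]$ (i.e.\ $P=C(n,2)$): if $V\subsetneq[n]$, the ``missing'' vertices can only help, since every constraint is an interlacing condition and deleting vertices only destroys interlacings; more precisely, replace each edge/triangle of $L\cup M\cup R$ by the induced configuration on a relabelling of $V$, check (LR), (LMR), (MM) are inherited, and check that the hypothesis ``each boundary edge of $P$ satisfies (Le), (Me), (Re)'' passes to the new data. Second, with $V=[n]$, I would set up an induction on $n$, the base case $n=3$ being the single triangle $[3]=P$, whose only ``edges'' are the boundary edges, satisfied by hypothesis. For the inductive step, I would show there exists an ``ear'' vertex $v\in\{2,\dots,n-1\}$ such that the diagonal $e_v=\{v-1,v+1\}$ (using the current labels) satisfies (Le), (Me), (Re), \emph{and} such that after deleting $v$ the induced families on $[n]\setminus\{v\}$ still satisfy the boundary hypothesis — then apply induction to $[n]\setminus\{v\}$ and add back the triangle $\{v-1,v,v+1\}$. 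The heart of the matter is the existence of such $v$: here I would use (LR), (LMR), (MM) to argue that the candidate diagonals $e_2,\dots,e_{n-1}$ cannot \emph{all} be blocked; a natural choice is to take $v$ minimal such that $e_v$ is unblocked, or to take $v$ to be the vertex just after the leftmost ``problematic'' endpoint, and show that the non-overlapping conditions force this to work (this parallels the role played by taking maximum vertices of sub-polygons in the proof of Theorem~\ref{thm_level_d=2} and the $d=3$ coning argument in Theorem~\ref{thm_level_d=3}). I would organize the case analysis by which of (Le)/(Me)/(Re) could fail at $e_v$ and trace the forbidden pattern back through (LR)/(LMR)/(MM) to the boundary edge adjacent to $v$, which is assumed fine, to reach a contradiction.

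\textbf{The main obstacle.} The hard part will be the existence of a safe ear at each step, i.e.\ showing the $n-2$ candidate diagonals cannot be simultaneously blocked and, crucially, that one can choose the safe ear $v$ so that the \emph{recursive hypothesis} (all boundary edges of the shrunk polygon satisfy (Le), (Me), (Re)) is preserved — note that deleting $v$ creates a new boundary edge $\{v-1,v+1\}$, which is exactly $e_v$, so this is automatic for that edge, but one must double-check no other boundary edge is affected (it is not, since boundary edges are unchanged except for the two incident to $v$ being replaced by $e_v$). The combinatorial bookkeeping of the interlacing inequalities under relabelling, and making the extremal choice of $v$ robust against all three constraint types at once, is where the delicacy lies; I expect the cleanest route is to prove a ``local'' lemma: \emph{if every boundary edge of a convex polygon on $W\subseteq[n]$ satisfies (Le), (Me), (Re), then some ear diagonal does too}, and then iterate it, which is precisely what Lemma~\ref{lemma_LMR_LMRT} asserts in disguise.
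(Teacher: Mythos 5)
Your inductive framework is sound as far as it goes: cutting an ear at $v$ replaces the two boundary edges at $v$ by the ear diagonal $e_v$ itself, so if $e_v$ satisfies (Le), (Me), (Re) the boundary hypothesis is inherited by the smaller polygon and the recursion closes. (Your preliminary reduction to $V=[n]$ is not a real simplification, though: elements of $L\cup M\cup R$ may have vertices outside $V$, ``inducing'' them on $V$ is exactly the content of the paper's normalizations (Ms), (MV), (LV), (RV) — which themselves use the boundary-edge hypothesis — and in any case after one ear cut the vertex set is again a proper subset of $[n]$, so the reduction does not persist.)

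The genuine gap is the existence of a safe ear, which is where the entire difficulty of the lemma lives and for which you give no argument. Your two suggested choices do not work as stated: ``take $v$ minimal such that $e_v$ is unblocked'' presupposes the existence you are trying to prove, and ``the vertex just after the leftmost problematic endpoint'' is not accompanied by any verification against the three constraint types. Note that the good-ear claim is genuinely equivalent to the lemma (any triangulation of a convex polygon on $\ge 4$ vertices has at least two ears in the dual tree, so a good triangulation yields a good ear), so your plan is not doomed — but it begs the question rather than answering it. By contrast, the paper does not use ears at all: it finds one good diagonal (generally not an ear) of the form $w_2q$, where $w_2$ is the middle vertex of a middle triangle $t$ chosen with $[\min(t),\max(t)]$ maximal, and $q$ is produced by an iterative descent $q_0>q_1>\cdots>q_k$ through middle vertices of successively interfering triangles of $M$, with an interval-covering argument ($\bigcup_i J_i \supseteq (q_k,m]$) invoking (LR), (LMR), (MM) to rule out a final (Le) violation; the base case $M=\emptyset$ is handled separately via the lattice machinery of Corollary~\ref{cor_separating_level}. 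Nothing of comparable substance appears in your proposal, so as written it is a restatement of the problem inside an induction, not a proof.
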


We may assume that $L \cup R$ does not contain any edges of $P$ since any diagonal of $P$ cannot cross such edges.

\begin{prop} \label{prop_only_LR}
Lemma \ref{lemma_LMR_LMRT} holds true when $M$ is empty.
\end{prop}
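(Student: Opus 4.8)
The plan is to prove Proposition~\ref{prop_only_LR} by induction on $|V|$, peeling off one ear of the polygon $P$ at a time. When $M$ is empty, conditions (Me) become vacuous, so we only need to control (Le) and (Re). The key observation is that conditions (Le) and (Re) are monotone in a useful sense: an edge $e=\{v_1<v_2\}$ violates (Le) exactly when some $\{l_1<l_2\}\in L$ has $v_1<l_1<v_2<l_2$, and violates (Re) exactly when some $\{r_1<r_2\}\in R$ has $r_1<v_1<r_2<v_2$; thus a ``bad left'' edge reaches too far to the right past an $L$-edge, and a ``bad right'' edge reaches too far to the left past an $R$-edge. So intuitively, among the diagonals from a fixed vertex, short ones tend to be safe.

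First I would set up the induction: if $|V|=3$ there is nothing to triangulate, so assume $|V|\ge 4$ and that every edge of $P=\conv(V)$ satisfies (Le) and (Re). The idea is to find an ear of $P$, i.e.\ three consecutive vertices $a<b<c$ on the boundary of $P$ (in the cyclic order; one has to be slightly careful since $P$'s boundary is the cycle $v_{i_1},\dots,v_{i_k}$ and ``consecutive'' is cyclic, but the moment-curve structure makes the lower and upper chains behave predictably — cf.\ Proposition~\ref{prop_gale}), such that the diagonal $\{a,c\}$ satisfies (Le) and (Re). Cutting off the triangle $\{a,b,c\}$ leaves a polygon $P'=\conv(V\setminus\{b\})$ all of whose edges are edges of $P$ except for the new edge $\{a,c\}$; if that one also satisfies (Le) and (Re), then all edges of $P'$ do, and by induction $P'$ has a triangulation with all edges satisfying the two conditions, to which we glue back $\{a,b,c\}$.

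The main obstacle — and the step I expect to require real work — is proving that such a ``good ear'' always exists. This is where (LR) enters crucially: (LR) forbids an $L$-edge and an $R$-edge from interlacing in the pattern $r_1<l_1<r_2<l_2$, which is precisely the obstruction that would let a bad-left constraint and a bad-right constraint conspire to rule out every ear. Concretely, I would argue by contradiction: suppose every candidate diagonal $\{a,c\}$ (over all ears) fails, so each is either bad-left (overshoots some $L$-edge on the right) or bad-right (overshoots some $R$-edge on the left). One then chases these witnesses around the boundary. Using the linear order on $L$ and $R$ compatible with $\preceq_3$ (as discussed after Corollary~\ref{cor_separating_level}, which translates (LR), the $M$-free part of (LMR), etc.\ into the existence of a suitable concatenated linear order), one can appeal to Corollary~\ref{cor_separating_level} directly: take $E_1$ to be (a suitable subset of) $R$, $E_3$ to be $L$, $E_2$ empty, concatenate compatible linear extensions, and obtain a triangulation $T\in S(n,2)$ with $e\ge_3 T$ for $e\in R$ (so no edge of $T$ is 4-interlacing as $r_1<v_1<r_2<v_2$, giving (Re)) and $e\le_3 T$ for $e\in L$ (giving (Le)). Then one restricts $T$ to the polygon $P$: since no edge of $L\cup R$ is an edge of $P$ (the reduction noted before the proposition), and since the edges of $P$ already satisfy (Le),(Re), the diagonals of $T$ lying inside $P$ form a triangulation of $P$ whose every edge satisfies (Le) and (Re). So in fact the cleanest route may be to bypass the ear induction entirely and deduce Proposition~\ref{prop_only_LR} from Corollary~\ref{cor_separating_level} by exhibiting the required linear order on $L\cup R$ and then intersecting the resulting global triangulation with $P$ — the hard part then being only to verify that this restriction is a genuine triangulation of $P$ and that the interlacing translations are exactly (Le) and (Re), which is a direct unwinding of Proposition~\ref{prop_height_interlacing} and Lemma~\ref{lemma_triangulation_simplex_d=2}.
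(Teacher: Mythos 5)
Your second, ``cleaner'' route is in fact the paper's route: apply Corollary~\ref{cor_separating_level} to a concatenated linear order on the edge families and restrict the resulting global triangulation $T'\in S(n,2)$ to $P$. But as written your application has two concrete problems, one of which is a genuine gap.

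First, the roles of $E_1$ and $E_3$ are transposed. Corollary~\ref{cor_separating_level} yields $e\le_3 T'$ for $e\in E_1$ and $e\ge_3 T'$ for $e\in E_3$, and unwinding (Le) and (Re) via Proposition~\ref{prop_height_interlacing} and Lemma~\ref{lemma_triangulation_simplex_d=2} shows that you need $l\le_3 T'$ for $l\in L$ and $r\ge_3 T'$ for $r\in R$; so $L$ must go into $E_1$ and $R$ into $E_3$, the opposite of your assignment (your stated conclusions are the correct ones; your assignment of the families does not produce them). This is a fixable slip. Second, and this is the real gap: you take $E_2=\emptyset$, and then the restriction of $T'$ to $P$ need not be a triangulation of $P$ at all --- when $V\subsetneq[n]$, triangles of $T'$ can cross the boundary of $P=\conv(V)$, and nothing in your argument prevents this. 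You flag ``verify that this restriction is a genuine triangulation of $P$'' as the remaining work, but with $E_2=\emptyset$ the claim is simply false in general. The missing idea, which is the crux of the paper's proof, is to put the boundary edges of $P$ into $E_2$ (together with $L\cap R$, which you also leave unplaced: an edge lying in both $L$ and $R$ must satisfy both $\le_3 T'$ and $\ge_3 T'$, so it must be forced into $T'$ and cannot sit in $E_1$ or $E_3$ alone). One then checks that this $E_2$ is mutually non-crossing and that the required ordering conditions hold, using (LR) together with the hypothesis that the edges of $P$ already satisfy (Le) and (Re); Corollary~\ref{cor_separating_level} then guarantees that $T'$ contains every edge of $P$, so its restriction to $P$ is a genuine triangulation with the desired properties. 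Your ear-peeling plan in the first half is abandoned and is not needed once this is done.
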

\begin{proof}
We let $E_1=L\setminus R$, $E_3=R\setminus L$ and $E_2$ be the union of the set of the edges of $P$ and of $L \cap R$. For each $i \in [3]$ we give a linear order on $E_i$ by using a linear extension of the partial order $\preceq_3$ guaranteed by Lemma \ref{lemma_linear_extension}. Then we concatenate them 
to a linear order on their union, namely that an edge of $E_i$ comes earlier than an edge of $E_j$ whenever $i<j$. It is easy to see that these three sets along with the linear order satisfy the conditions of Corollary \ref{cor_separating_level}, so we find a triangulation $T' \in S(n,2)$ with the properties at the conclusion. By Lemma \ref{lemma_triangulation_simplex_d=2} and Proposition \ref{prop_height_interlacing}, it is easy to see that 
Conditions (Le) and (Re) hold for each edge in $T'$. 
Further, all edges of $E_2$, in particular those of the polygon $P$, are in $T'$, and thus $T'$ contains a triangulation of $P$.
The restriction $T$ of $T'$ to $P$ is a desired triangulation.
\end{proof}

Before we prove Lemma \ref{lemma_LMR_LMRT} we introduce some conventions. 
The \textit{separating edge} of a vertex $w \in [n]\setminus V$, if there is such a $w$, is an edge $e$ of the polygon $P=\conv(V)$ which weakly separates $w$ from $V$ in $C(n,2)$. We say that two vertices $w_1$ and $w_2$ of $[n]$, not necessarily distinct, are \textit{at the same side} if we have $w_1=w_2$, or $w_1w_2$ is an edge of $P$, or at least one of them, say $w_1$, does not belong to $V$ and the separating edge of $w_1$ weakly separates $w_1$ and $w_2$ 
from the (other) vertices of $V$.

\begin{proof}[Proof of Lemma \ref{lemma_LMR_LMRT}]
We  call an element of $L$, $R$ and $M$ a \textit{left edge}, a \textit{right edge}, and a \textit{middle triangle}, respectively. We use induction on $|V|+|M|+|R|+|L|$. When $|V|=3$, there is nothing to prove. If there are no middle triangles, then we conclude using Proposition \ref{prop_only_LR}. So suppose that 
$|M|\ge 1$ and $|V| \geq 4$.

Our goal is to find a diagonal in the polygon $P$ which satisfies (Le), (Me) and (Re). Indeed, such a diagonal divides $P$ into smaller polygons, say $P_1$ and $P_2$. Since each of $P_1$ and $P_2$ have fewer vertices than $P$, using the induction hypothesis (with respect to the same sets $L$, $M$ and $R$ but with smaller vertex set) 
we have a desired triangulation of each $P_i$. The union of these two triangulations is a desirable triangulation of $P$, and we conclude. For finding such diagonal we will use also the induction on $|M|$, $|R|$ and $|L|$.  

W.l.o.g. we may further assume the followings, see Figure \ref{fig:middle_assumption_w2V}:
\begin{itemize}
    \item[(Ms)] For a middle triangle $t'=\{w_1'<w_2'<w_3'\}$, the middle vertex $w_2'$ is not at the same side of $w_1'$ nor of $w_3'$ (this in particular forbids the case when $w_2' \leq \min(V)$ or $\max(V)\leq w_2'$). 

    \item[(MV)] For a middle triangle $t'=\{w_1'<w_2'<w_3'\}$, the middle vertex $w_2'$ belongs to $V$.

    \item[(LV)] For a left edge $e_L=\{l_1<l_2\}$, the left vertex $l_1$ belongs to $V$ unless $l_1$ is at the same side of both $1$ and $n$.

    \item[(RV)] For a right edge $e_R=\{r_1<r_2\}$, the right vertex $r_2$ belongs to $V$ unless $r_2$ is at the same side of both $1$ and $n$.
\end{itemize}

\begin{figure}[ht]
			\centering
			\includegraphics[totalheight=3.5cm]{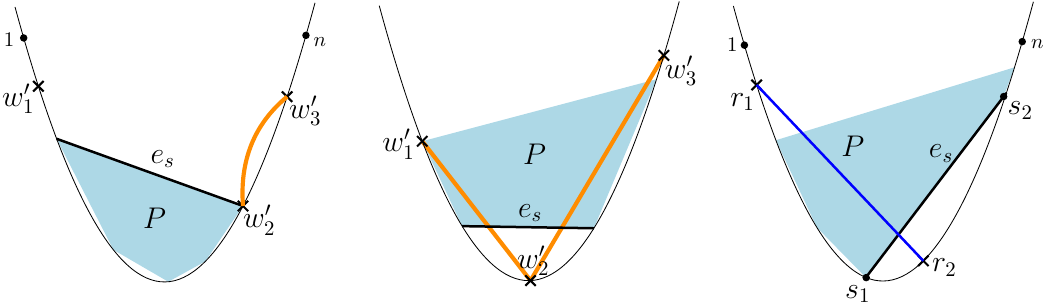}
			\caption{Left: case when $w_2'$ is at the same side of $w_1'$ or $w_3'$ against (Ms). In this case the triangle $t'=w_1'w_2'w_3'$ can be ignored. Middle:  case when $w_2' \notin V$ against (MV). In this case $e_s$ violates (Me). Right: case against (RV). In this case $e_s$ violates (Re) unless $s_1\leq r_1$, in which case we can ignore $e_R=r_1r_2$.}
			\label{fig:middle_assumption_w2V}
		\end{figure}
 Indeed, for (Ms), suppose w.l.o.g. that $w_2'$ is at the same side of $w_1'$. If $w_2'$ is also at the same side of both 1 and $n$ altogether, then we have either $w_2' \leq \min(V)$ or $\max(V) \leq w_2'$, which implies that $(w_1', w_2')\cap V$ or $(w_2', w_3')\cap V$ is empty, respectively (here $(\cdot,\cdot)$ denotes an open interval). This implies that $t'$ is not 5-interlacing 
 with any diagonals of $P$ that violates (Me). Therefore, we can ignore $t'$ from $M$, and this reduces $|M|$ so that we can use the induction hypothesis. For the other case we can easily see that $(w_1',w_2')\cap V$ is empty, and we conclude similarly.

For (MV), suppose otherwise that $w_2'\notin V$. By (Ms), 
$w_2'$ is not at the same side of $w_1'$, nor is it at the same side of $w_3'$.
In particular, $w_2'$ is not at the same side of both $1$ and $n$. This implies that the separating edge $e_s$ of $w_2'$ crosses both edges $\{w_1',w_2'\}$ and $\{w_2', w_3'\}$. This is a contradiction as $e_s$ satisfies (Me) w.r.t. $t'$.

For (RV), suppose otherwise that $r_2\notin V$ but $r_2$ is not at the same side of both $1$ and $n$. Let $e_s=\{s_1 < s_2\}$ be the separating edge of $r_2$. We cannot have $r_1 <s_1$ since then $e_s$ violates (Re) w.r.t. $e_R$. So we have $r_1 \geq s_1$, but in this case $V \cap (r_1, r_2)$ is empty. By the same argument as for (Ms), we can simply remove $e_R$, 
which reduces $|R|$, finishing the proof by the induction hypothesis. Similarly argue for (LV), to reduce  $|L|$ and use the induction hypothesis. 

\smallskip
\begin{figure}[ht]
			\centering
			\includegraphics[totalheight=3.5cm]{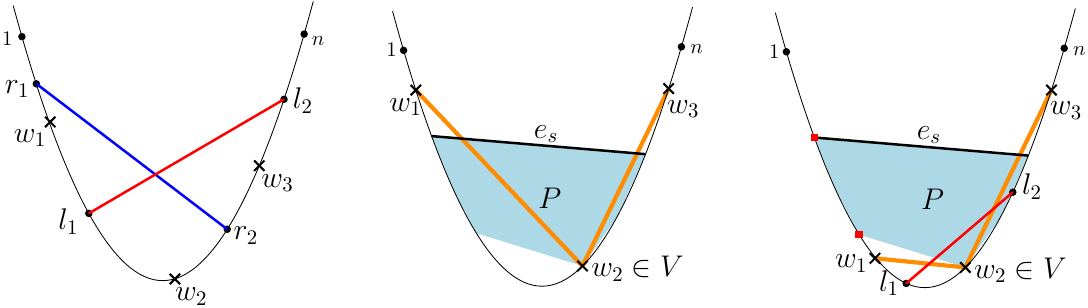}
			\caption{Left: 
            when neither (i) nor (ii) holds. Not to violate (LMR), we have $r_1\leq w_1$ and $w_3\leq l_2$. Then (LR) is violated. Middle: when $w_1<\min(V)$ and $\max(V)<w_3$. Then we violate (Ms) or (Me).
            Right: when $w_3>\max(V)$ and $\min(V)=m\leq w_1$, but (ii) is violated. Here depicted the case $l_2<w_3$ hence (LMR) is violated. Else $w_3 \leq l_2$ and then $e_s$ violates (Le).
           }
			\label{fig:comb_lemma}
\end{figure}

To each middle triangle $t' \in M$, define the interval $I_{t'}:=[\min(t'), \max(t')]$. Let $t=\{w_1<w_2<w_3\}$ be a middle triangle such that $I_t$ is maximal with respect to inclusion among all middle triangles. 
In this case one of the following two alternatives must hold:
\begin{itemize}
\item[(i)] No $\{r_1< r_2\}\in R$ is 4-interlacing with $w_2w_3$ starting with $r_1$, i.e., $r_1<w_2<r_2<w_3$ never holds.

\item[(ii)] No $\{l_1< l_2\} \in L$ is 4-interlacing with $w_1w_2$ starting with $w_1$, i.e., $w_1<l_1<w_2<l_2$ never holds.
\end{itemize}
Indeed, otherwise either (LMR) is violated or else (LR) is violated; see the left of Figure \ref{fig:comb_lemma}. Assume (i) holds; the case when (ii) holds it treated similarly. 

We will find a desirable diagonal of $P$ among the edges of the form $w_2q$ where $q$ satisfies $w_3\le q \in V$. We first show that we may assume there exists a vertex $q$ satisfying $w_3\le q \in V$. To see this, suppose otherwise. Then $\max(V)<w_3$ so $w_3$ is at the same side of both $1$ and $n$.
If additionally
$w_1<\min(V)$ (see the middle of Figure \ref{fig:comb_lemma}), 
then the separating edge of $n$ (which also separates  $1$, $w_1$ and $w_3$) violates (Me) w.r.t. the triangle $t\in M$, which leads to a contradiction (here we used (Ms) to ensure $\min(V)<w_2<\max(V)$). Thus, there is a vertex $q \in V$ with $q \leq w_1$. In this case (ii) also  holds, as otherwise 
for a left edge $e_L=\{l_1<l_2\}$ which violates (ii), the separating edge $e_s$ of $n$ would either violate (Le) with $e_L$ (when $w_3 \leq l_2$) or else $l_2<w_3$ hence $e_L$ and $t \in M$ violate (LMR), which again is a contradiction; see the right of Figure \ref{fig:comb_lemma}.
Therefore, by symmetry of cases (i) and (ii), we may assume that (i) holds and that there exists a vertex $q \in V$ with $q \geq w_3$.

Let $m$ and $m'$ be the maximum and minimum vertex among such $q$, respectively. 
Then $m'$ cannot be the immediate successor of $w_2$ in $V$ with respect to $<$ (see the right at Figure \ref{fig:comb_lemma} for a reflected illustration): Otherwise, since we have $\min(V)<w_2<\max(V)$ by (Ms), any vertex in $(w_2, m']$ is at the same side of $w_2$, and in particular so is $w_3$. This is forbidden by (Ms). 
Therefore, every edge $w_2q$ with $q \geq w_3$ and $q\in V$ is a diagonal of $P$, not an edge of $P$.

 We will choose a suitable $q$ following some decreasing sequence from $m$ so that $w_2q$ is a diagonal of $P$ satisfying (Le), (Re) and (Me). For this, first we observe the following.

\begin{figure}[ht]
			\centering
			\includegraphics[totalheight=2.8cm]{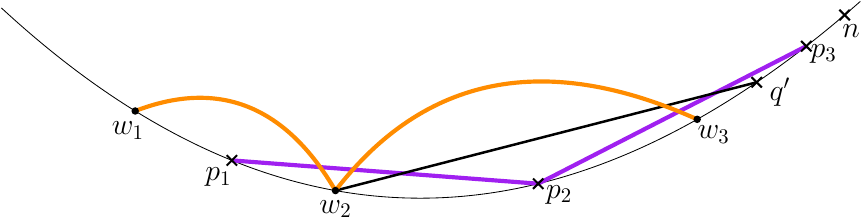}
			\caption{Illustration of Claim \ref{claim_middle_interlace_step}: why $p_2 < w_3$ contradicts (MM).}
			\label{fig:claim1}
		\end{figure}

\begin{claim} \label{claim_middle_interlace_step}
Let $\{p_1, p_2, p_3\} \in M$ be a middle triangle which is 5-interlacing with some $e'=w_2q'$ such that $q' \in [w_3,n]\cap V$.
Then $p_2 \geq w_3$. This further implies that $q' \in (w_3, n]$.
\end{claim}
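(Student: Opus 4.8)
The plan is to prove Claim~\ref{claim_middle_interlace_step} by contradiction, extracting a forbidden $6$-interlacing of two middle triangles from the hypotheses. Assume $\{p_1<p_2<p_3\}\in M$ is $5$-interlacing with $e'=w_2q'$ where $q'\in[w_3,n]\cap V$. Since the $5$-interlacing of an edge and a triangle must alternate starting and ending with triangle vertices, the pattern is $p_1<w_2<p_2<q'<p_3$; in particular $p_2>w_2$. Suppose for contradiction that $p_2<w_3$, so that $w_2<p_1$ is impossible and in fact $p_1<w_2<p_2<w_3$. I would now compare $\{p_1,p_2,p_3\}$ with the distinguished maximal middle triangle $t=\{w_1<w_2<w_3\}$. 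Recall that $t$ was chosen so that the interval $I_t=[\min(t),\max(t)]=[w_1,w_3]$ is maximal with respect to inclusion among all $I_{t'}$, $t'\in M$; hence $I_{\{p_1,p_2,p_3\}}\not\supsetneq I_t$.

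The key step is to show that, under the assumption $p_2<w_3$, the two triangles $t$ and $\{p_1,p_2,p_3\}$ are $6$-interlacing, contradicting (MM). We already have $p_1<w_2$ and $w_2<p_2<w_3$. It remains to pin down where $p_1$ sits relative to $w_1$ and where $p_3$ sits relative to $w_3$. Since $p_1<w_2<p_2<q'\le$ (something $\ge w_3$) and $p_3>q'\ge w_3$, we get $p_3>w_3$ immediately. For the left end: if $p_1\ge w_1$ then, combining with $p_2<w_3$ and $p_3>w_3$, we would have $I_{\{p_1,p_2,p_3\}}=[p_1,p_3]\subsetneq[w_1,?]$ only if $p_3$ were $\le w_3$, which it is not — so instead I should argue directly that $w_1<p_1$ forces $I_t\subsetneq I_{\{p_1,p_2,p_3\}}$ is false while $[w_1,w_3]\subseteq[p_1,p_3]$ fails too, and the only consistent possibility compatible with $I_t$ being inclusion-maximal is $p_1<w_1$. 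Once $p_1<w_1<w_2<p_2<w_3<p_3$ is established, this is exactly the forbidden $6$-interlacing $w_1<p_1$... wait, the order is $p_1<w_1<w_2<p_2<w_3<p_3$, i.e. the alternating sequence $p_1,w_1,w_2$ is not alternating — so I must instead extract the genuine alternating subsequence, which is $p_1<w_1<p_2$? No: the correct reading is that from $p_1<w_1<w_2<p_2<w_3<p_3$ we pick $p_1\in t',\ w_1\in t,\ p_2\in t',\ w_3\in t,\ p_3\in t'$ — only length $5$. The honest contradiction comes from $w_1<w_2<p_2$: I would use $\min(t)\le p_1$ fails, forcing $p_1<w_1$, and then the sequence $w_1<p_2<w_3<p_3$ together with some vertex below $w_1$ from $t'$ gives the $6$-term alternation $p_1<w_1<p_2<w_3$ — I should be careful and instead directly invoke that $[w_1,w_3]$ and $[p_1,p_3]$ overlap in a crossing fashion with $p_1<w_1$ and $w_3<p_3$, which by Proposition~\ref{prop_overlap} at $d=2$ gives $6$-interlacing, contradicting (MM). See Figure~\ref{fig:claim1}.

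Having derived the contradiction, we conclude $p_2\ge w_3$. For the final assertion, that $q'\in(w_3,n]$ rather than $q'=w_3$: if $q'=w_3$, then the $5$-interlacing $p_1<w_2<p_2<q'<p_3$ reads $p_1<w_2<p_2<w_3<p_3$ with $p_2\ge w_3$ — but $p_2<q'=w_3$ is forced by the interlacing, a contradiction with $p_2\ge w_3$. Hence $q'>w_3$, i.e. $q'\in(w_3,n]$. The main obstacle I anticipate is being scrupulous about which alternating subsequence realizes the $6$-interlacing: one must use the inclusion-maximality of $I_t$ to place $p_1$ strictly left of $w_1$ and read off $w_3<p_3$ from the tail of the $5$-interlacing, and only then does the pair $(t,\{p_1,p_2,p_3\})$ violate (MM). All other steps are short order-comparisons on $[n]$.
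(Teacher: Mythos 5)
Your overall strategy is the right one (and the same as the paper's): read off the forced $5$-interlacing pattern $p_1<w_2<p_2<q'<p_3$, use the inclusion-maximality of $I_t=[w_1,w_3]$ to locate $p_1$ relative to $w_1$, and then exhibit a $6$-interlacing of $t$ with $\{p_1,p_2,p_3\}$ violating (MM). But you execute the key step backwards. Maximality of $I_t$ says that no $I_{t'}$ \emph{strictly contains} $I_t$; it does not say $I_{t'}\subseteq I_t$. Since $p_3>q'\ge w_3$, the only way $[p_1,p_3]$ could strictly contain $[w_1,w_3]$ is if $p_1\le w_1$, so maximality forces $w_1<p_1$ — the opposite of the conclusion $p_1<w_1$ you settle on. With the correct inequality, the assumption $p_2<w_3$ immediately yields $w_1<p_1<w_2<p_2<w_3<p_3$, which is a genuine alternating sequence of length $6$ (starting with $w_1\in t$), contradicting (MM); that is the whole proof.

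Having taken the wrong branch, you correctly notice that $p_1<w_1<w_2<p_2<w_3<p_3$ admits no alternating subsequence of length $6$ (the best is $p_1,w_1,p_2,w_3,p_3$, of length $5$), but the patches you then attempt do not work. With $p_1<w_1$ and $w_3<p_3$ the intervals $[p_1,p_3]$ and $[w_1,w_3]$ are \emph{nested}, not crossing, and in any case Proposition~\ref{prop_overlap} at $d=2$ equates overlapping with $4$-interlacing; it cannot manufacture a $6$-interlacing that the vertex order does not contain. So the contradiction is never actually derived in your write-up. The final step ($p_2\ge w_3$ together with $p_2<q'$ gives $q'>w_3$) is fine, but it rests on the unproven main claim. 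The fix is one line: reverse the maximality deduction to get $w_1<p_1$, after which the forbidden $6$-interlacing is immediate.
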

\begin{proof}
As
$w_3 \leq q'<p_3$, by maximality of the interval $I_t=[w_1, w_3]$, we have $w_1<p_1$. Also, by the condition of the claim, we have $p_1<w_2<p_2$. Therefore, if additionally $p_2 < w_3$ it would contradics Condition (MM), hence $p_2 \geq w_3$. Since $w_3\leq p_2<q'$, we further have $q'>w_3$. 
\end{proof} 

\begin{figure}[ht]
			\centering
			\includegraphics[totalheight=5.6cm]{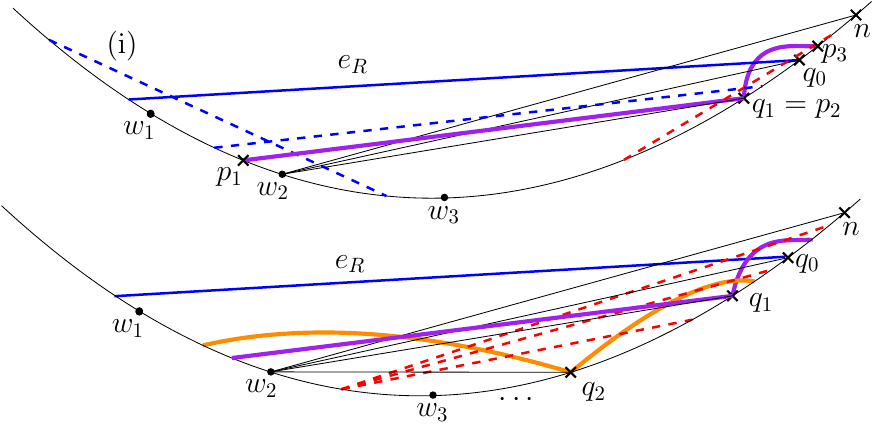}
			\caption{Illustration of the process of finding a suitable diagonal when $V=[n]$. Left edges are red,  right edges are blue, and when dashed these edges are forbidden. The thick purple and orange edges describe middle triangles found in the process. Thin black edges from $w_2$ are the edges $e_i$.\\
            Above: Right edges forbidden by (i), forbidden edges due to the right edge $e_R$ with the right vertex $q_0$, 
            and a middle triangle used in further steps to define $q_1$. In particular, after choosing the diagonal $w_2q_0$ right edges are not problematic later on. 
        Below: Inductive iteration of finding the edges $e_i$. The edge $e_2=w_2q_2$ does not violate (Le) because of (LMR) and (LR).}
			\label{fig:finding_e}
		\end{figure}

Our choice of $q$ depends on case analysis; see Figure \ref{fig:finding_e} for illustration.  
For a left edge $\{l_1<l_2\} \in L$ we cannot have $w_2<l_1<m<l_2$: When $m=n$ it is simply impossible. When $n \notin V$, the separating edge of $n$ (it contains $m$) violates (Le) w.r.t. $l_1l_2$. So if the diagonal $w_2m$ is not suitable it must violate (Me) or (Re).
Let $M_0$ and $R_0$ be the set of middle triangles and right edges w.r.t. which $w_2m$ violates (Me) or (Re)  respectively. If $M_0 \cup R_0 =\emptyset$, we choose the diagonal $w_2m$ and conclude the proof. So suppose  $M_0 \cup R_0$ is nonempty.  
If $M_0 \ne \emptyset$ then $n\ne m$, or equivalently $n \notin V$. 

For $\{r_1<r_2\} \in R_0$ the right vertex $r_2$ belongs to $V$ by (RV) and by $w_2<r_2<m$. Further, for $\{p_1<p_2<p_3\} \in M_0$ the middle vertex $p_2$ belongs to $V$ by (MV). Also, such right and middle vertices are at least $w_3$, by (i) and by Claim \ref{claim_middle_interlace_step} respectively.
Hence $q_0$, defined as the minimum among all the middle vertices of the middle triangles of $M_0$ and all the right vertices of the right edges of $R_0$, satisfies $w_3\le q_0 \in V$.

Let $e_0=w_2q_0$. Then $e_0$ satisfies (Le):  otherwise there is a left edge $e_L \in L$ such that $e_L$ either violates (LR) w.r.t. a right edge of $R_0$ 
(when $q_0$ as the right vertex of an edge in $R_0$),
or (LMR) with a middle triangle in $M_0$
(when $q_0$ is the middle vertex of a triangle in $M_0$ and the maximal vertex of $e_L$ is at most $m$), 
or else the separating edge of $n$ violates (Le) w.r.t. $e_L$ (as in this case $m<\max(e_L)$ and this separating edge contains $m$). 

Also by the minimality of $q_0$, $e_0$ does not violate (Re); in fact, for the same reason, right edges will not be an issue anymore when searching for $q\le q_0$.

Hence, if $e_0$ satisfies (Me), then we are done by choosing the diagonal $e_0$. Suppose then that is not the case, namely $e_0$ violates (Me).
We repeat this process of finding a new diagonal  inductively: Suppose we have found $e_{i-1}=w_2q_{i-1}$ for some $i \geq 1$ with $q_{i-1} \in [w_3, q_{i-2})\cap V$ (set $q_{-1}=m$ for the case $i=1$). 
Let $M_i$ be the set of middle triangles which are 5-interlacing with $e_{i-1}$. Choose the minimum middle vertex among all triangles from $M_i$ and denote it by $q_i$. Let $e_{i}=w_2q_{i}$. By our choice we have $q_i <q_{i-1}$. Claim \ref{claim_middle_interlace_step} implies that $q_i \geq w_3$. Also, $q_i \in V$ by (MV).

This inductive process cannot go forever so it should stop at some $e_k$. We claim that $e_k$ is our desired choice of diagonal of $P$: 
obviously $e_k$ is not 5-interlacing with any middle triangle, namely $e_k$ satisfies (Me). By the minimality of $q_0$ again, 
$e_k$ satisfies (Re). It is left to show that $e_k$ satisfies (Le).

Suppose otherwise for contradiction, namely, that $e_k$ violates (Le) w.r.t. some left edge $e_L= \{l_1<l_2\} \in L$, that is, we have $w_2<l_1<q_k<l_2$. For $i \in [k]$, let $J_i$ be the interval $(q_i,\max(t_i))$ where $t_i$ is the middle triangle used to determine $q_i$. By the interlacing assumptions between $e_{i-1}=w_2q_{i-1}$ and $t_i$ 
we have $q_{i-1} \in J_i$ for $i\in [k]$. 
Recall that for $e_0$ there were two cases:
(a) when $q_0$ is the middle vertex of a middle triangle $t_0$, and (b) when $q_0$ is the right vertex of a right edge $e_R$ in $R_0$.

In \textbf{Case (a)}:
let $J_0$ be similarly defined as $(q_0,\max(t_0))$. By the interlacing assumption again, we have $m \in J_0$. 
Then
$\bigcup_{i=0}^k J_i$ covers $(q_k, m]$. 
We distinguish two cases:
(a1) Case $l_2 \leq m$. Then there is some $s$ with $0 \leq s \leq k$ such that $l_2 \in J_s$. Since $\min(t_s)< w_2$, this implies that $e_L$ and $t_s$ violate (LMR) which is a contradiction. 
(a2) Case $l_2>m$. Then the separating edge of $n$ in $P$ violates (Le) w.r.t $e_L$, which is again a contradiction. 

In \textbf{Case (b)}:
since $\bigcup_{i=i}^k J_i$ covers $(q_k, q_0]$, if $l_2 \in (q_k, q_0]$ we can argue similarly to reach a contradiction. Else, $l_2>q_0$ hence $e_L$ violates (LR) with the right edge $e_R$ used to define $q_0$, a contradiction. 

The proof is complete. \end{proof}

\section{Proof of Theorem \ref{thm_main}(ii)} \label{sec_countereg}
In this section we prove Theorem \ref{thm_main}(ii), that is, for every $D\ge 5$ and $n \geq D+3$, we provide 
a collection of pairwise non-overlapping $D$-simplices on the moment curve $\gamma_D$ in $\RR^D$ on exactly $n$ vertices which is not extendable into a triangulation of the cyclic polytope without adding new vertices, or \textit{non-extendable} for short.

We first recall a small non-extendable example when $D=5$ and $n=D+3=8$ due to Rambau~\cite[Example 4.5]{rambau}, and include our own proof of it for completeness.
\begin{figure}[ht]
			\centering
			\includegraphics[totalheight=2.8cm]{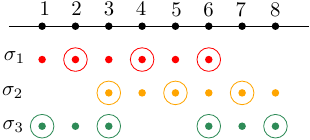}
			\caption{Illustration of Proposition \ref{prop_eg_D=5_n=8}. Circled elements in each $\sigma_i$ describe the only possible choice of elements which can form a $7$-interlacing pattern with other simplices.}
			\label{fig:d=5_n=8}
		\end{figure}
\begin{prop}(\cite[Example 4.5]{rambau}) \label{prop_eg_D=5_n=8}
   On the moment curve in $\RR^5$, one cannot add a new $5$-simplex in $[8]$ to the list
   \[\sigma_1=\{1, 2, 3, 4, 5, 6\}, \sigma_2=\{3, 4, 5, 6, 7, 8\}, \sigma_3=\{1, 2, 3, 6, 7, 8\}\]
   without overlapping one of the simplices in the list.
\end{prop}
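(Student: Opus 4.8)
The plan is to turn the statement into a finite combinatorial verification via Proposition~\ref{prop_overlap}: on $\gamma_5$ two $5$-simplices overlap in $\RR^5$ if and only if they are $7$-interlacing, so it suffices to show that \emph{every} $6$-subset $\tau\subseteq[8]$ other than $\sigma_1,\sigma_2,\sigma_3$ is $7$-interlacing with at least one of $\sigma_1,\sigma_2,\sigma_3$. Write $\tau=[8]\setminus\{a,b\}$ with $a<b$; the three excluded $\tau$'s correspond to the pairs $\{7,8\}$, $\{1,2\}$, $\{4,5\}$, so there are $\binom{8}{2}-3=25$ pairs $\{a,b\}$ to handle. The first reduction is to exploit the order-reversing involution $x\mapsto 9-x$ of $[8]$: it preserves the combinatorial type of point configurations on the moment curve (hence the overlapping/interlacing relation), it maps $\sigma_1\leftrightarrow\sigma_2$ and fixes $\sigma_3$, and it sends the pair $\{a,b\}$ to $\{9-b,9-a\}$; thus it is enough to treat one pair from each orbit of this involution, roughly halving the work.

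For each pair I would use the following description of $7$-interlacing between two $6$-subsets $\tau,\sigma$ of $[8]$. Since $\tau\neq\sigma$ we have $|\tau\cup\sigma|\in\{7,8\}$, so a $7$-interlacing sequence consists of all of $\tau\cup\sigma$ (when $|\tau\cup\sigma|=7$) or of all but one element of it (when $|\tau\cup\sigma|=8$). In such a sequence every element of $\tau\setminus\sigma$ is forced to be ``$\tau$-colored'', every element of $\sigma\setminus\tau$ is forced to be ``$\sigma$-colored'', and elements of $\tau\cap\sigma$ are free; an alternating coloring of a fixed length-$7$ subsequence exists if and only if, coding $\tau$ by $0$ and $\sigma$ by $1$, the parity of (color)$+$(position in the subsequence) is the same for all forced elements. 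This makes each of the remaining cases a one-line parity check, in which one either exhibits an explicit alternating sequence $v_1<\cdots<v_7$ together with its coloring, or reads off from the parity condition which of $\sigma_1,\sigma_2,\sigma_3$ does the job.

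Executing this, I would organize the pairs $\{a,b\}$ according to their intersections with the ``blocks'' $\{7,8\}$, $\{1,2\}$, $\{4,5\}$ (which are exactly the complements of $\sigma_1,\sigma_2,\sigma_3$) and the parities of $a,b$. The computation shows that $\tau$ is $7$-interlacing with $\sigma_1$ precisely when $a$ is even and ($b\ge 7$ or $b$ even), and hence (applying the involution above) with $\sigma_2$ precisely when $b$ is odd and ($a\le 2$ or $a$ odd); the pairs left over, for instance those of the form $\{a,5\}$, $\{4,b\}$, or those contained in $\{1,3,6,8\}$, turn out to be $7$-interlacing with $\sigma_3$. Checking that the three conditions together exhaust all $25$ pairs is the only genuine labor, and it is the step I expect to be most error-prone rather than conceptually difficult. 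The one subtlety to watch is the case where $\{a,b\}$ meets the complementary pair of the $\sigma_i$ one wants to use: then $|\tau\cup\sigma_i|=7$, there is no element to discard, and the parity condition has to be satisfied exactly — it is precisely to cover these ``tight'' cases that all three simplices $\sigma_1,\sigma_2,\sigma_3$ are needed.
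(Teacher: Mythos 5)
Your overall framework is sound and is essentially the paper's: reduce to a finite combinatorial check via Proposition~\ref{prop_overlap} and decide $7$-interlacing for each candidate $\tau=[8]\setminus\{a,b\}$. Your characterizations of which complements yield a $\tau$ that $7$-interlaces $\sigma_1$ (namely $a,b\notin\{1,3,5\}$ and $\{a,b\}\ne\{7,8\}$) and $\sigma_2$ (namely $a,b\notin\{4,6,8\}$ and $\{a,b\}\ne\{1,2\}$) are correct, and your parity test on a fixed length-$7$ subsequence is a valid way to decide interlacing. The gap is in the final step, exactly where you flagged it. The pairs not covered by $\sigma_1$ or $\sigma_2$ are precisely the eight pairs with one element in $\{1,3,5\}$ and one in $\{4,6,8\}$ other than $\{4,5\}$; they are not ``pairs of the form $\{a,5\}$, $\{4,b\}$, or pairs contained in $\{1,3,6,8\}$''. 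Worse, some members of the families you name do not $7$-interlace $\sigma_3$ at all: for $\{a,b\}=\{4,7\}$, i.e.\ $\tau=\{1,2,3,5,6,8\}$, the only forced elements are $5\in\tau\setminus\sigma_3$ and $7\in\sigma_3\setminus\tau$, which sit at positions $4$ and $6$ of the unique $7$-element sequence $1<2<3<5<6<7<8$ and fail your own parity test; similarly $\{2,5\}$ fails. (Those pairs happen to be covered by $\sigma_1$ and $\sigma_2$ respectively, so the statement is not endangered, but the case split as written does not close.)

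The correct completion is short: each of the eight residual $\tau$'s contains $2$ and $7$ (its complement meets neither) and contains at least one $x\in\{4,5\}$ (its complement is not $\{4,5\}$), so $1<2<3<x<6<7<8$, with odd positions in $\sigma_3=\{1,2,3,6,7,8\}$ and even positions in $\tau$, is a $7$-interlacing with $\sigma_3$. This is exactly the paper's argument run forward rather than by enumeration: non-interlacing with $\sigma_1$ and $\sigma_2$ forces $|\tau\cap\{1,3,5\}|\le 2$ and $|\tau\cap\{4,6,8\}|\le 2$, hence the two missing elements split one per triple and $2,7\in\tau$; then non-interlacing with $\sigma_3$ forces $4,5\notin\tau$, i.e.\ $\tau=\sigma_3$. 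That deduction avoids the $25$-case bookkeeping entirely and is where your write-up should converge.
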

\begin{proof}
    (See Figure \ref{fig:d=5_n=8}.) Suppose otherwise that there is a $5$-simplex $\tau$ on $[8]$ on $\gamma_5$  which is different from and does not overlap any $\sigma_i$ for $i\in [3]$ in $\RR^5$. As $\tau$ does not $7$-interlaces $\sigma_1$ nor $\sigma_2$, we have $|\tau \cap \{1,3,5\}| \leq 2$ and $|\tau \cap \{4,6,8\}| \leq 2$. This implies $2, 7 \in \tau$. Thus none of $4$ and $5$ are in $\tau$, otherwise $\tau$ would $7$-interlace with  $\sigma_3$, hence overlap $\sigma_3$. This forces $\tau=\sigma_3$, a contradiction.
\end{proof}

Next, we prove two propositions that extend the example in Proposition \ref{prop_eg_D=5_n=8} to prove Theorem~\ref{thm_main}(ii). In both, we assume that there is a given list $\FF$ of pairwise non-overlapping $D$-simplices on $\gamma_D$ on exactly $n$ vertices which is non-extendable.

\begin{prop} \label{prop_countereg_extension_n}
There is a collection $\FF'$ of pairwise non-overlapping $D$-simplices on $\gamma_D$ on exactly $n+1$ vertices, $\FF\subseteq \FF'$, such that $\FF'$ is non-extendable.
\end{prop}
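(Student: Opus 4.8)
The plan is to take the non-extendable collection $\FF$ on the $n$-vertex set $[n]\subseteq\gamma_D$ and produce a non-extendable collection $\FF'$ on $n+1$ vertices by adding a single new vertex that is ``far to the right'' on the moment curve. Concretely, relabel the points of $\gamma_D$ so that the vertex set becomes $\{1,\dots,n,n+1\}$ with $\FF$ sitting on $\{1,\dots,n\}$, and set $\FF' := \FF \cup \{\tau_0\}$ where $\tau_0$ is a single new $D$-simplex chosen to use the vertex $n+1$ together with $D$ of the old vertices, in such a way that $\tau_0$ does not overlap any simplex of $\FF$. A natural candidate is $\tau_0 = \{n-D+1, n-D+2,\dots,n,n+1\}$ (the ``last $D+1$ points''), or more safely a simplex whose vertices form an interval abutting the right end, since intervals are the easiest to keep non-interlacing with everything inside $[n]$. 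First I would verify, via Proposition~\ref{prop_overlap}, that this $\tau_0$ is non-overlapping with every $\sigma\in\FF$: since $\sigma\subseteq[n]$ and the only vertex of $\tau_0$ outside a short interval is $n+1>\max(\sigma)$, the interlacing sequence with $\sigma$ is severely limited, and one checks no $(D+2)$-interlacing pattern can occur. So $\FF'$ is a valid collection of pairwise non-overlapping $D$-simplices on exactly $n+1$ vertices containing $\FF$.

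Next I would show $\FF'$ is non-extendable. Suppose for contradiction that $T'$ is a triangulation of $C(n+1,D)$ with vertex set $\{1,\dots,n+1\}$ containing $\FF'$. The key step is to \emph{restrict} $T'$ to the vertex set $[n]$ to obtain a triangulation $T$ of $C(n,D)$ containing $\FF$, contradicting the non-extendability of $\FF$. The standard tool here is that $C(n,D)$ is obtained from $C(n+1,D)$ by deleting the vertex $n+1$, and deletion of a vertex from a triangulation of a cyclic polytope yields a triangulation of the smaller cyclic polytope; one way to see this cleanly is via the height/lifting picture of Section~\ref{sec_prelim}: the lifting of $T'$ is a section inside $C(n+1,D+1)$, and restricting the piecewise-linear height function $h_{T'}$ to the subpolytope $C(n,D)\subseteq C(n+1,D)$ gives a piecewise-linear function whose domains of linearity are simplices on $[n]$, hence a triangulation $T$ of $C(n,D)$. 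Since $n+1$ does not appear in any simplex of $\FF$, every $\sigma\in\FF$ that was a face of $T'$ remains a face of $T$. This gives the contradiction.

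The main obstacle I anticipate is making the restriction step rigorous: it is intuitively obvious that ``a triangulation of a cyclic polytope restricts to a triangulation of a cyclic polytope on a vertex subset,'' but one must be careful because an arbitrary vertex subset of $[n+1]$ need not give a cyclic polytope whose faces include the relevant simplices — which is precisely why I want to delete the \emph{last} vertex $n+1$ rather than an interior one, so that $C(n,D)=\conv\{\gamma_D(1),\dots,\gamma_D(n)\}$ is genuinely a facet-complement-type subpolytope and the Gale evenness description of Proposition~\ref{prop_gale} behaves well. An alternative, more combinatorial route that sidesteps the geometry of restriction is to argue directly: if $T'\supseteq\FF'$ exists, take any simplex $\tau\in T'$ not containing $n+1$ whose vertices lie in $[n]$; the set $T'' := \{\tau\in T' : n+1\notin\tau\}$ is a collection of $D$-simplices on $[n]$, and one shows using the Gale evenness / interlacing criteria that $T''$ is exactly a triangulation of $C(n,D)$ — the point being that the facets of $C(n+1,D)$ visible ``from the $n+1$ side'' get replaced, in passing to $C(n,D)$, by facets of $C(n,D)$ that are themselves unions of faces already forced by $T''$. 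Either way, once the restriction is justified, $T''\supseteq\FF$ contradicts the hypothesis that $\FF$ is non-extendable, completing the proof; I would present the lifting-function version as the cleanest and relegate the combinatorial bookkeeping to a sentence or two.
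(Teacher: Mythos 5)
There is a genuine gap, and it sits exactly where you flagged your own worry: the restriction step. The claim that ``deletion of a vertex from a triangulation of a cyclic polytope yields a triangulation of the smaller cyclic polytope'' is false, and neither the lifting-function version nor the combinatorial version of your argument repairs it. If $T'$ is a triangulation of $C(n+1,D)$, the simplices of $T'$ containing the vertex $n+1$ are in no way confined to the region $\conv([n+1])\setminus\conv([n])$: they may cut deep into $\conv([n])$, so the set $T''=\{\tau\in T': n+1\notin\tau\}$ need not cover $C(n,D)$, and the restriction of $h_{T'}$ to $C(n,D)$ has domains of linearity that are intersections of simplices with $C(n,D)$ --- polytopes whose vertices are generally not in $[n]$ at all. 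Already for $D=2$, $n=3$: the triangulation $\{124,234\}$ of the quadrilateral $C(4,2)$ contains the vertex $4$ in every triangle, so $T''=\emptyset$, which is certainly not a triangulation of the triangle $123$. (Theorem~\ref{thm_link}, which you may have had in mind, is about the \emph{link} of the last vertex and drops the dimension by one; it is not a deletion statement.) Because of this, your $\FF'=\FF\cup\{\tau_0\}$ with a single new simplex $\tau_0$ is not shown to be non-extendable: a hypothetical extension $T'$ could triangulate most of $\conv([n])$ using simplices through $n+1$, and no contradiction with the non-extendability of $\FF$ follows. (Your first paragraph --- that an interval simplex ending at $n+1$ is non-overlapping with everything on $[n]$ --- is fine, but it is precisely the weakness of that single constraint that kills the second half.)

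The fix, which is what the paper does, is to add not one simplex but the \emph{entire cone} from $n+1$ over all facets of $C(n,D)$ visible from $n+1$, i.e.\ $\FF'=\FF\cup\{\sigma\cup\{n+1\}:\sigma\text{ a visible facet of }C(n,D)\}$. These cone simplices tile the region $\conv([n+1])\setminus\conv([n])$ and have interiors disjoint from $\conv([n])$, so they overlap nothing in $\FF$; and any triangulation of $C(n+1,D)$ containing all of them is forced, by interior-disjointness, to cover $\conv([n])$ using only simplices on $[n]$, which would be a triangulation of $C(n,D)$ extending $\FF$ --- the desired contradiction. In short: you need to occupy the whole collar around $C(n,D)$ with prescribed simplices before the restriction argument you want becomes legitimate.
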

\begin{proof}
We consider the vertex set $[n]$ of the $D$-simplices of $\FF$ as a subset of $[n+1]$ (in $\gamma_D$). From the new vertex $n+1$, for every facet $\sigma$ of the cyclic polytope $C(n,D)$ which is visible from $n+1$
(check Subsection \ref{subsec_d=4} for the definition) we add the $D$-simplex $\sigma \cup \{n+1\}$ to $\FF$ in order to get an updated list $\FF'$. Certainly each of these new simplices in $\FF'$ does not overlap any of the given simplices, and to extend $\FF'$  to a triangulation of $C(n+1,D)$ we need to have a triangulation of $C(n,D)$ that extends $\FF$ which is impossible.
\end{proof}

For a triangulation $T \in S(n,d)$, let 
\[T/n=\{\sigma\subseteq [n]: \sigma \cup \{n\} \in T,\, n \notin \sigma\}.\]
We use the following known fact (see \cite[Theorem 4.2(ii)]{rambau}).

\begin{thm} \label{thm_link}
    Given $T \in S(n,d)$, $T/n$ is a triangulation in $S(n-1, d-1)$.
\end{thm}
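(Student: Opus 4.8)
The plan is to realize $T/n$ as the triangulation that $T$ induces on a vertex figure of $C(n,d)$ at its last vertex. Put $P=C(n,d)\subseteq\RR^d$ and choose a linear functional $a$ and a scalar $c$ with $\inprod{a}{\gamma_d(n)}>c>\inprod{a}{\gamma_d(j)}$ for all $j\in[n-1]$; let $H=\{x:\inprod{a}{x}=c\}$, so $H$ strictly separates the vertex $n$ from all the others, and set $Q:=P\cap H$, the vertex figure. For $\tau\in T$ with $n\notin\tau$, every vertex of $\conv(\tau)$ satisfies $\inprod{a}{\cdot}<c$, so $\conv(\tau)$ misses $H$; for $\tau\in T$ with $n\in\tau$, the slice $\conv(\tau)\cap H$ is a simplex having exactly one vertex $x_w$ on each edge $\conv(\{n,w\})$, $w\in\tau\setminus\{n\}$. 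Since $T$ is a geometric simplicial complex covering $P$, and since $\conv(\tau)\cap\conv(\tau')=\conv(\tau\cap\tau')$ forces the slices to meet in common faces, the family $T_H$ of the simplices $\conv(\tau)\cap H$ for those $\tau\in T$ with $n\in\tau$, together with all their faces, is a geometric simplicial complex whose union is $P\cap H=Q$ (no $\tau$ avoiding $n$ contributes). Thus $T_H$ is a triangulation of $Q$.

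Next I would match $T_H$ with $T/n$ combinatorially. The vertex figure $Q$ is combinatorially the cyclic polytope $C(n-1,d-1)$ on vertex set $[n-1]$: its face lattice is the interval above $n$ in the face lattice of $P$, and by Proposition \ref{prop_gale} the map $F\mapsto F\setminus\{n\}$ sends the facets of $C(n,d)$ through $n$ (all upper facets when $d$ is odd; all facets of the form $\{1,n\}*(\cdots)$ together with $\{n-1,n\}*(\cdots)$ when $d$ is even) onto precisely the facets of $C(n-1,d-1)$, as one checks against the two parities of the lists in Proposition \ref{prop_gale}; taking intersections of facets this propagates to an isomorphism of face lattices, under which the vertex $x_w$ of $Q$ is named $w\in[n-1]\subseteq\gamma_{d-1}$ and the simplex $\conv(\tau)\cap H$ is named $\tau\setminus\{n\}$. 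So through this identification $T_H$ is literally $T/n$, which is therefore a triangulation of $C(n-1,d-1)$. Its vertex set is all of $[n-1]$: when $d\ge 3$ every pair $\{j,n\}$ with $j\in[n-1]$ is an edge of $C(n,d)$ --- directly from Proposition \ref{prop_gale} when $d$ is odd, and from $2$-neighborliness of $C(n,d)$ when $d\ge 4$ is even --- hence a face of the triangulation $T$, so $j$ appears in $T/n$. (For $d=2$ one reads $S(n-1,1)$ as allowing triangulations of $C(n-1,1)$ on a subset of $[n-1]$, or simply restricts to $d\ge 3$, the only case used later.)

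The hyperplane-slicing bookkeeping is routine; the one point that needs genuine care is the combinatorial identification $Q\cong C(n-1,d-1)$ --- that the vertex figure of a cyclic polytope at its last vertex is again a cyclic polytope of one lower dimension on one fewer vertex --- which should be spelled out explicitly from Proposition \ref{prop_gale} as indicated. A purely combinatorial alternative is also available: Proposition \ref{prop_overlap} together with an interlacing argument shows that if $\tau_1\cup\{n\}$ and $\tau_2\cup\{n\}$ do not overlap in $\RR^d$ then $\tau_1,\tau_2$ do not overlap in $\RR^{d-1}$, so $T/n$ is at least a family of pairwise non-overlapping $(d-1)$-simplices on $\gamma_{d-1}$; but verifying that this family actually tiles $C(n-1,d-1)$ is most transparent through the geometric slicing above.
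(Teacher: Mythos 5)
The paper itself does not prove this statement -- it is imported verbatim from Rambau \cite[Theorem 4.2(ii)]{rambau} -- so there is no in-house argument to compare yours against. Judged on its own, your slicing construction is sound up to a point: the hyperplane $H$ separating $\gamma_d(n)$ from the rest exists, simplices of $T$ avoiding $n$ miss $H$, simplices containing $n$ slice to simplices on the points $x_w$, the identity $\conv(\tau)\cap\conv(\tau')=\conv(\tau\cap\tau')$ does force the slices to meet in common faces, and the covering of $Q=P\cap H$ follows. So $T_H$ triangulates the vertex figure $Q$, and your verification that every $j\in[n-1]$ actually occurs as a vertex is also fine.

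The genuine gap is in the step ``$Q$ is combinatorially $C(n-1,d-1)$, hence $T_H$ \emph{is} $T/n$ and $T/n$ triangulates $C(n-1,d-1)$.'' A face-lattice isomorphism between two polytopes does not transport triangulations: whether a family of simplices on a point set is a triangulation of its convex hull is an invariant of the \emph{oriented matroid} of the point configuration (its circuits, i.e.\ Radon partitions, govern which simplices overlap and which interior $(d-2)$-faces are shared by exactly two cells), and for $d\ge 3$ the face lattice of a simplicial polytope does not determine the oriented matroid of its vertices. What you actually need is that the configuration $(x_1,\dots,x_{n-1})\subseteq H\cong\RR^{d-1}$ has the \emph{same} oriented matroid as $(\gamma_{d-1}(1),\dots,\gamma_{d-1}(n-1))$, namely the alternating one of rank $d$. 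This is true and not hard: the oriented matroid of a vertex figure at $n$ is the contraction at $n$ of the oriented matroid of the vertex set, and contracting the alternating oriented matroid at its last element again yields the alternating oriented matroid (in terms of chirotopes, $\chi'(i_1,\dots,i_{d-1})=\chi(i_1,\dots,i_{d-1},n)=+$ for $i_1<\dots<i_{d-1}<n$). But this computation, not the Gale-evenness matching of facet lists, is the actual content of the identification; as written, your proof only establishes a face-lattice isomorphism, which is insufficient. Your closing ``purely combinatorial alternative'' has the complementary defect you already acknowledge: it yields pairwise non-overlapping simplices but does not verify that they tile $C(n-1,d-1)$. With the contraction argument inserted, the proof is complete and is essentially the standard one.
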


\begin{prop} \label{prop_countereg_extension_D}
There is a collection $\FF'$ of pairwise non-overlapping $(D+1)$-simplices on $\gamma_{D+1}$ on exactly $n+1$ vertices which is non-extendable.
\end{prop}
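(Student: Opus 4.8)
The plan is to obtain $\FF'$ by coning $\FF$ over a single new vertex. View the $n$-vertex set of $\FF$ as $\{1,\dots,n\}\subseteq[n+1]$ on $\gamma_{D+1}$, and put
\[
\FF'=\{\sigma\cup\{n+1\}:\sigma\in\FF\}.
\]
Each member of $\FF'$ is a set of $D+2$ points on $\gamma_{D+1}$, hence a $(D+1)$-simplex, and the union of the vertex sets of the members of $\FF'$ is exactly $[n+1]$. Thus $\FF'$ is a family of $(D+1)$-simplices on exactly $n+1$ vertices, and it remains to verify that $\FF'$ is pairwise non-overlapping in $\RR^{D+1}$ and that it is non-extendable.

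For the non-overlapping property I would argue purely combinatorially via Proposition \ref{prop_overlap}: two $(D+1)$-simplices on $\gamma_{D+1}$ overlap iff they are $(D+3)$-interlacing, while two $D$-simplices on $\gamma_D$ overlap iff they are $(D+2)$-interlacing. So it suffices to show that for $\sigma_1,\sigma_2\in\binom{[n]}{D+1}$, if $\sigma_1\cup\{n+1\}$ and $\sigma_2\cup\{n+1\}$ are $(D+3)$-interlacing then $\sigma_1$ and $\sigma_2$ are $(D+2)$-interlacing (the converse also holds: append $n+1$ to a witness, since $n+1$ lies in both $\sigma_i\cup\{n+1\}$ and can take whichever parity is needed). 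Given an alternating witness $w_1<w_2<\cdots<w_{D+3}$ for $\sigma_1\cup\{n+1\},\sigma_2\cup\{n+1\}$, its top element $w_{D+3}$ is either $n+1$ or lies in $[n]$; in either case $w_1<\cdots<w_{D+2}$ are all at most $n$, hence all lie in $\sigma_1\cup\sigma_2$ (they are distinct from the apex $n+1$), and they retain the same alternating membership pattern, so $\sigma_1,\sigma_2$ are $(D+2)$-interlacing. Since $\FF$ is pairwise non-overlapping on $\gamma_D$, no two distinct members of $\FF'$ are $(D+3)$-interlacing, i.e. $\FF'$ is pairwise non-overlapping. I expect the careful bookkeeping of the two alternation types (beginning with an element of $\sigma$ versus of $\tau$) to be the only mildly delicate point, and it is elementary.

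Finally, for non-extendability, suppose toward a contradiction that $\FF'$ extends to a triangulation $T\in S(n+1,D+1)$ without new vertices. By Theorem \ref{thm_link} (applied with $n$ and $d$ replaced by $n+1$ and $D+1$), the link $T/(n+1)=\{\rho\subseteq[n]:\rho\cup\{n+1\}\in T\}$ is a triangulation in $S(n,D)$, that is, a triangulation of $C(n,D)$ on vertex set $[n]$. For each $\sigma\in\FF$ we have $\sigma\cup\{n+1\}\in\FF'\subseteq T$, and $\sigma\cup\{n+1\}$ has $D+2$ vertices, hence is a facet of $T$; therefore $\sigma\in T/(n+1)$. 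Thus $T/(n+1)$ is a triangulation of $C(n,D)$ containing $\FF$, contradicting the assumption that $\FF$ is non-extendable. This proves the proposition; no step other than the interlacing claim requires real work.
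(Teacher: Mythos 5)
Your proposal is correct and is essentially the paper's own proof: your $\FF'=\{\sigma\cup\{n+1\}:\sigma\in\FF\}$ is exactly the paper's join $\FF*\{\{n+1\}\}$, the non-overlapping claim is verified via the same $(D+3)$-interlacing criterion of Proposition~\ref{prop_overlap} (you merely spell out the truncation of the witness, which the paper leaves as "clearly"), and non-extendability is deduced identically from Theorem~\ref{thm_link} applied to the link of $n+1$.
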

\begin{proof}
 Let $\FF'=\FF*\{\{n+1\}\}$ and consider the elements of $\FF'$ as simplices on $\gamma_{D+1}$. Clearly any two simplices of $\FF'$ do not overlap in $\RR^{D+1}$, e.g. as they are not $(D+3)$-interlacing, see Proposition~\ref{prop_overlap}. We claim that $\FF'$ is non-extendable. Suppose otherwise that $\FF'$ can be extended to a triangulation $T \in S(n+1,D+1)$. By Theorem \ref{thm_link}, $T/(n+1)$ is a triangulation of $C(n,D)$, but it extends $\FF$, a contradiction.
\end{proof}

\begin{proof}[Proof of Theorem \ref{thm_main}(ii)]
Starting from the example of Proposition \ref{prop_eg_D=5_n=8}, we 
first construct an example in dimension $D$ with $D+3$ vertices by applying Proposition~\ref{prop_countereg_extension_D}, and then 
extend it to an example with $n$ vertices using Proposition \ref{prop_countereg_extension_n}.
\end{proof}
In~\cite[Prop.8.1]{opperman_thomas} non-extendable examples of 3 $D$-simplices on $D+3$ vertices on $\gamma_D$ are given for every \emph{even} $D\ge 6$ (explicitly for $D=6$ and implicitly for even $D\ge 8$);  our examples are different and work for every $D\ge 5$. 
We now show that in Theorem~\ref{thm_main}(ii) the lower bound $D+3$ on the vertex set size is tight.

\begin{prop} \label{prop_n<=D+2}
For each dimension $D$, given pairwise non-overlapping simplices on the moment curve $\gamma_D$ in $\RR^D$ on at most $D+2$ (and at least $D+1$) vertices  can be extended to a triangulation without adding new vertices.
\end{prop}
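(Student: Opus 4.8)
The plan is to observe that when $n = |A| \le D+2$, the cyclic polytope $C(n,D) = \conv(A)$ is combinatorially very simple, and use this to extend any non-overlapping collection essentially for free. First, when $n = D+1$ the polytope $C(D+1,D)$ is a $D$-simplex, so the only triangulation (without new vertices) is the trivial one consisting of that single $D$-simplex together with all its faces; any family $\FF$ of non-overlapping simplices on $A$ consists of faces of this simplex, so $\FF$ is already contained in that triangulation and there is nothing to do. The remaining case is $n = D+2$, where I claim $C(D+2,D)$ has exactly two triangulations: by a classical fact (a point configuration of $n = d+2$ points in general position in $\RR^d$ has exactly two triangulations, obtained from the unique affine Radon partition — see e.g.~\cite{triangulations_book}), the $D+2$ vertices satisfy a unique (up to scaling) affine dependence, which for points on $\gamma_D$ separates $A$ into its odd-indexed and even-indexed vertices; the two triangulations are $T_1 = \{A \setminus \{i\} : i \text{ odd}\}$ and $T_2 = \{A \setminus \{i\} : i \text{ even}\}$ (each taken with all faces), one being the lower and the other the upper envelope.

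Next I would argue that any non-overlapping family $\FF$ of $D$-simplices on $A$ is contained in $T_1$ or in $T_2$. A $D$-simplex on $A$ (with $|A| = D+2$) has the form $A \setminus \{i\}$ for a single index $i$. By Proposition~\ref{prop_overlap}, two such simplices $A\setminus\{i\}$ and $A\setminus\{j\}$ with $i \ne j$ overlap in $\RR^D$ iff they are $(D+2)$-interlacing; a direct check (the symmetric difference of $A\setminus\{i\}$ and $A\setminus\{j\}$ is $\{i,j\}$, and the interlacing alternation along the whole chain $A$ is longest precisely when $i,j$ have the same parity versus opposite parity) shows that $A\setminus\{i\}$ and $A\setminus\{j\}$ are non-overlapping exactly when $i$ and $j$ have the same parity. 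Hence a pairwise non-overlapping family of $D$-simplices uses only removed-indices of a single parity, i.e.\ $\FF \subseteq T_1$ or $\FF \subseteq T_2$. If $\FF$ additionally contains simplices of dimension $< D$, these are faces of some $D$-simplex on $A$ (indeed any $k$-subset of $A$ with $k \le D$ extends to some $A \setminus \{i\}$), and the non-overlapping condition still forces compatibility with one of the two triangulations — concretely, one picks a parity class consistent with the $D$-dimensional members of $\FF$ (if there are none, either parity works), and checks that every lower-dimensional member of $\FF$, being a face of the corresponding $T_j$, does not overlap any simplex of $T_j$.

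The main (and only mildly delicate) obstacle is the last compatibility check for families containing lower-dimensional simplices: one must verify that a $k$-simplex $\tau \subset A$ that does not overlap any of the chosen $D$-simplices is in fact a face of the corresponding full triangulation $T_j$, rather than merely a face of the point set. This follows because for $|A| = D+2$ every proper subset of $A$ of size $\le D$ spans a face of at least one of $T_1, T_2$, and the interlacing/Radon-partition bookkeeping pins down which one; if $\tau$ is a common face of both $T_1$ and $T_2$ there is no constraint, and otherwise $\tau$ determines the parity and one checks $\FF$ is consistent with it. All of this is a finite, purely combinatorial case analysis on the single Radon partition of $A$, so no real difficulty arises beyond careful bookkeeping; I would present it compactly by reducing to the $D$-dimensional members of $\FF$ as above and invoking the two-triangulation description of $C(D+2,D)$.
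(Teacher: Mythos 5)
Your proposal is correct and follows essentially the same route as the paper: the $n=D+1$ case is trivial, and for $n=D+2$ both arguments rest on the unique Radon partition of the $D+2$ points into odd- and even-indexed vertices and the two resulting triangulations $\partial \sigma*\langle\tau\rangle$ and $\langle\sigma\rangle*\partial \tau$. The paper compresses your final bookkeeping for lower-dimensional members into a single observation --- since the two Radon parts overlap, at least one of them is contained in no member of $\FF$, so all of $\FF$ (in every dimension at once) lies in the corresponding triangulation --- which is exactly the consistency check you sketch.
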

\begin{proof}
Let $\FF$ be a given list of non-overlapping simplices on $\gamma_D$ and let $n$ be the number of vertices. When $n=D+1$, we simply extend $\FF$ to the full simplex. 
When $n=D+2$, there is a unique (Radon) partition $[n]=\sigma \cup \tau$, so $\sigma$ and $\tau$ overlap in $\RR^D$.
So at least one of them, say $\sigma$, is not contained in any simplex of $\FF$. By this, every simplex of $\FF$ is a member of the triangulation $\partial \sigma*\langle\tau\rangle \in \HST(n,D)$, where $\partial \sigma=\{\sigma':\sigma'\subsetneq \sigma$\} and $\langle\tau\rangle=\{\tau':\tau'\subseteq \tau\}$.
\end{proof}

\begin{figure}[ht]
			\centering
\includegraphics[totalheight=5.5cm]{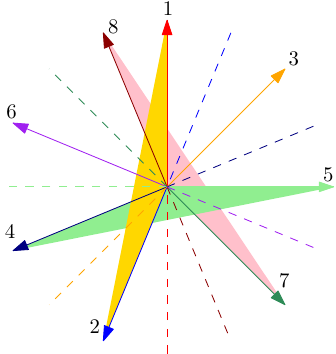}
    \caption{Description of the Gale transform of the construction of Proposition \ref{prop_eg_D=5_n=8}. The dual cones to $\sigma_1$, $\sigma_2$ and $\sigma_3$ are spanned by $\{7,8\}$, $\{1,2\}$ and $\{4,5\}$, respectively. Since they do not have intersection in the interior, the list is non-extendable.}
			\label{fig:d=5_n=8_gale}
		\end{figure}

\begin{remark}[Communicated with Francisco Santos]\label{remark_santos}
An alternative way to see non-extendability when $n=D+3$ can be provided via Gale transform, see Figure \ref{fig:d=5_n=8_gale}. It is known that when $n=D+3$ every triangulation over $n$ points in $\RR^D$ is regular. A given list of $D$-simplices is extendable to a regular triangulation without adding new vertices if and only if 
the intersection of all dual cones (the dual cone of $\sigma$ is the cone at the Gale dual over the complement $[n]\setminus \sigma$) has a full-dimensional intersection. Therefore, it is reduced to checking intersection of the dual cones in $\RR^2$. For more backgrounds, consult \cite{triangulations_book}.
\end{remark}

\section{Final remarks}\label{sec:final}
\textbf{Algorithmic aspects.} Theorem~\ref{thm_main}(i) raises the question of how fast the extension can be computed:

\begin{pro}\label{prob:running_time}
For $d=3,4$, given a geometric complex $\Delta$ on $n$-vertex set $V$ on the moment curve $\gamma_d$, find the running time of computing a triangulation $T \in S(V,d)$ of the cyclic $d$-polytope $\conv(V)$ which is an extension of $\Delta$, without adding new vertices; namely in the worst case, as $n$ tends to infinity. 
\end{pro}
Given such extension $T$, by coning the boundary of $T$ with a new vertex we obtain a triangulation of $S^d$, and thus, by the Dehn-Sommerville relations, $T$ has $O(n^2)$ faces, which is tight for some triangulations $T$; see also \cite[Thm.2.4]{opperman_thomas} for $d=4$. Thus the running time is $\Omega(n^2)$, the size of the output in the worst case.

\begin{prop}\label{prop:greedy_algorithm}
For both $d=3,4$, there exists a greedy algorithm that produces an extension $T \in S(V,d)$ of $\Delta$ as in Problem~\ref{prob:running_time} and runs in $O(n^5)$ time.   
\end{prop}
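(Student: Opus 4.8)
The plan is to make the proof of Theorem~\ref{thm_main}(i) constructive and to bound the cost of each step. First I would trace through the reduction of $\Delta$ performed at the start of the proof of Theorem~\ref{thm_main}(i): we may assume $\FF$ consists only of triangles (and edges when $d=3$, though in $\RR^3$ all edges are boundary faces so we only keep triangles), which costs $O(n^3)$ since that is the number of candidate triangles on $n$ vertices. Then I would order the projected triangles $\sigma_1,\dots,\sigma_m$ with $m=O(n^3)$: for $d=2$ a linear extension of $\preceq_3$, and for $d=3$ an order by $(\min(\sigma_i),-\max(\sigma_i))$ lexicographically, which is immediate; a topological sort on $O(n^3)$ elements with pairwise comparisons costs $O(n^6)$ naively but can be done in $O(n^3\log n)$ since the $d=2$ order is essentially lexicographic on $(\min,\text{mid},\max)$ after restricting to a linear extension of $\preceq_3$, which is itself a total order refinement computable directly. (If $O(n^6)$ is acceptable one can even skip the optimization; but the cleanest route is to observe $\preceq_3$ restricted to triangles admits the explicit linear extension ordering by $(\max-\min,\ \min,\ -\text{mid})$, so sorting costs $O(n^3\log n)$.)

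Next I would implement the construction of the triangulations $T_i\in S(n,d)$. For $d=2$ this is exactly the explicit construction $T(\sigma_i)$ from the proof of Theorem~\ref{thm_level_d=2} — fan triangulations of the at most three subpolygons — computable in $O(n)$ time each, so $O(n^4)$ total. For $d=3$ one uses the algorithmic content of the proof of Theorem~\ref{thm_level_d=3}: initial triangulation $T_0=T_M^{V_0}$ in $O(n^2)$; then the main loop finds, for each middle triangle constraint, a suitable diagonal of the polygon $P=\conv(J_M)$ by the decreasing-sequence argument (Claims~\ref{claim_middle_interlace_step} and the search for $e_0,e_1,\dots,e_k$), each diagonal search scanning $O(n)$ vertices against $O(n^3)$ triangle constraints, hence $O(n^4)$ per subpolygon and $O(n^2)$ subpolygons after recursion — I would need to check this recursion has total cost $O(n^5)$, using that each diagonal split strictly decreases $|V|$ and the combined work telescopes; finally the inductive coning of Claim~\ref{claim_cone_edge} to lift the $2$-dimensional triangulation of $J_M$ back to $S(n,3)$ costs $O(n^2)$. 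So producing each $T_i$ costs $O(n^5)$, but since all $T_i$ for a fixed $\sigma$ with the same $\min$/$\max$ pattern are governed by the same construction I would be more careful: we do not build $m=O(n^3)$ triangulations independently.

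The actual greedy algorithm I would present avoids building all $T_i$: instead, following the proof, maintain a current triangulation $S$ (initialized at the maximal triangulation $T_M^{[n]}$, which dominates nothing useful — rather initialize so that $\sigma_m\in S$), and process $\sigma_1,\dots,\sigma_m$ in \emph{reverse} order, at each step replacing $S$ by $S\wedge T(\sigma_k)$ (for $d=2$) or by $S\wedge T_k$ (for $d=3$, where $T_k$ is the triangulation from Theorem~\ref{thm_level_d=3} built so that $\sigma_k\in T_k$ and $\sigma_j\le_{d+1}T_k$ for $j<k$). By the invariant established in the proof of Theorem~\ref{thm_main}(i) — equation~(\ref{eq_lattice}) and Lemma~\ref{lemma_IMT} — after all $m$ steps $S$ contains every $\sigma_k$. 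Computing one meet in $\HST(n,d)$ is $O(n^2)$ via submersion sets ($\sub(T_1)\cap\sub(T_2)$, then reconstruct using Lemma~\ref{lemma_IMT}, reconstruction being the potentially expensive part but doable in $O(n^2)$ or $O(n^3)$ by Edelman–Reiner's encoding), each $T_k$ is built in $O(n^5)$ by the analysis above, and there are $O(n^3)$ values of $k$ — giving $O(n^8)$, too slow. Hence the real saving I must exploit: the $T_k$'s are only needed through their submersion sets, and $\sub(T_k)$ for the $d=4$ case is determined by forbidding exactly those $2$-faces that $5$-interlace some $\tau_i$, which can be computed directly in $O(n^2)$ without building $T_k$ fully. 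With that, each of the $O(n^3)$ steps costs $O(n^2)$, total $O(n^5)$.

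The main obstacle I anticipate is making the $d=3$ diagonal-finding procedure genuinely incremental and bounding its amortized cost: the recursive splitting of $P$ into $P_1,P_2$ could, if implemented naively, re-scan all $O(n^3)$ middle-triangle and left/right-edge constraints at every node of the recursion tree, and one must argue that restricting $L,M,R$ to each subpolygon's vertex interval keeps the total across all recursion nodes at $O(n^5)$ — this uses that the recursion tree has $O(n)$ leaves and depth $O(n)$, and that at each node the dominant work is the $O(n)$-step decreasing search with $O(n)$ many $e_i$'s, each checked against $O(n^2)$ locally-relevant triangles, i.e. $O(n^4)$ per node but with the node count and constraint localization bringing it down; I would double-check whether $O(n^5)$ genuinely holds or whether one needs an extra $\log$ factor, and if the honest bound is slightly worse, state $O(n^5)$ for $d=3,4$ by using the meet-based global algorithm of the previous paragraph rather than the recursive one for the hard direction.
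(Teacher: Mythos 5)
Your proposal takes a fundamentally different — and much harder — route than the paper, and it does not close. The paper's proof does \emph{not} effectivize the existence proof of Theorem~\ref{thm_main}(i) at all. Instead it runs a naive greedy: maintain a list $L_i$ of $(d-1)$-faces still missing a coface, pick one such $\tau$, scan the $O(n)$ candidate $d$-simplices $\sigma\supseteq\tau$, and add the first one that is not $(d+2)$-interlacing with any facet of the current complex. The single place Theorem~\ref{thm_main}(i) enters is as a black-box certificate that this scan can never fail: the current complex is a pairwise non-overlapping family, hence extends to some triangulation $T^*$, and $T^*$ supplies a valid $\sigma$. With $O(n^2)$ steps, $O(n)$ candidates per step, and $O(n^2)$ per interlacing check against all current facets, the $O(n^5)$ bound is immediate. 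None of the machinery you invoke (linear extensions of $\preceq_{d+1}$, the triangulations $T_k$ of Theorems~\ref{thm_level_d=2} and~\ref{thm_level_d=3}, lattice meets, submersion sets) appears in the algorithm.

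Within your own approach there are concrete gaps. First, your key "saving" — that $\sub(T_k)$ for $d=3$ "is determined by forbidding exactly those $2$-faces that $5$-interlace some $\tau_i$, which can be computed directly in $O(n^2)$" — is false on both counts: the submersion set of a triangulation in $S(n,3)$ is a family of up to $\binom{n}{3}$ triangles, so it cannot even be written down in $O(n^2)$ time, and by Lemma~\ref{lemma_triangulation_simplex_d=3} membership $\sigma\le_4 T_k$ depends on the actual edges of the specific triangulation $T_k$ constructed in Theorem~\ref{thm_level_d=3}, not merely on the input triangles $\tau_i$; distinct triangulations satisfying the conclusion of Theorem~\ref{thm_level_d=3} have different submersion sets. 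Second, your cost accounting does not close even on your own estimates: building each $T_k$ costs $O(n^5)$ by your analysis, there are $\Omega(n^2)$ of them (you say $O(n^3)$), and the reconstruction of a meet from an intersection of $O(n^3)$-sized submersion sets is left unbounded — you arrive at $O(n^8)$ and the proposed repair rests on the false claim above. Third, you explicitly concede you cannot verify whether the recursive diagonal-finding in the proof of Lemma~\ref{lemma_LMR_LMRT} amortizes to $O(n^5)$. A correct proof along your lines would require resolving all of these; the intended proof avoids them entirely by treating extendability as an oracle for the greedy step.
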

\begin{proof}
We may assume that $\Delta$ has dimension at most $d-1$, as for any $d$-simplex $\sigma \in \Delta$, every extension $T \in S(V,d)$ of $\Delta\setminus\{\sigma\}$ must contain $\sigma$ by Proposition \ref{prop_d/2-skeleton}.
We may also assume that the boundary complex $\partial \conv(V)$ is a subcomplex of $\Delta$ as $\partial \conv(V)$ is a subcomplex of any triangulation in $S(V,d)$. All these reductions only require $O_d(n^2)$ time.
Consider the following greedy algorithm: 
\begin{itemize}
    \item Step 0: Let $T_0=\Delta$. Make a list $L_0$ of all $(d-1)$-simplices in $\partial \conv(V)$. Then $L_0$ is non-empty and $T_0$ is a geometric complex on $\gamma_d$. This step requires $O(n^2)$ time.

\item Step $i$ (for $i\ge 1$):
\subitem If $L_{i-1}$ is empty, stop and declare $T=T_{i-1}$ is a desired extension of $\Delta$.

\subitem Else, choose a simplex $\tau\in L_{i-1}$. We will have either $\tau$ is in $\partial \conv(V)$ and is contained in no $d$-face of $T_{i-1}$, or $\tau$ is in $T_{i-1}\setminus \partial \conv(V)$ and is contained in exactly one $d$-face of $T_{i-1}$. 
Order by $\sigma_1,\ldots,\sigma_t$  the $d$-simplices on $V$ containing $\tau$ 
-- there are $O(n)$ of them -- and go through them checking if $\sigma=\sigma_j$ is $(d+2)$-interlacing 
with some facet of $T_{i-1}$. 

If it is not, let 
$T_i=T_{i-1}\cup \langle \sigma\rangle$ (where $\langle \sigma \rangle$ is the simplicial complex with a unique maximal face $\sigma$); by construction $T_i$ is a geometric simplicial complex on $\gamma_d$.
Now change $L_{i-1}$ as follows to obtain $L_i$: delete from $L_{i-1}$ the face $\tau$ and all $(d-1)$-faces of $\sigma$ which belong to $L_{i-1}$, and add to it all $(d-1)$-faces of $\sigma$ which do not belong to $L_{i-1}$. 

If $\sigma=\sigma_j$ does $(d+2)$-interlaces with some facet of $T_{i-1}$, let $\sigma=\sigma_{j+1}$ and repeat the check.

\end{itemize}

The scenario that each face $\sigma_j$ does $(d+2)$-interlace with some facet of $T_{i-1}$ is impossible by Theorem~\ref{thm_main}(i), hence the algorithm will stop when $L_i$ is empty, giving $T=T_{i}$ as a geometric simplicial complex which is an extension of $\Delta$. 
It is left to verify that $T$ triangulates $\conv(V)$.
Indeed, the simplices of $T$ cover the whole cyclic polytope $\conv(V)$:
if not, then there exists a point $p\in \conv(V)\setminus T$, and $p$ sees a $(d-1)$-face $\tau\in T$, hence $\tau\in L_i$, contradicting that $L_i$ is empty.

Running time analysis: a single check in Step $i$ whether a given simplex $\sigma$ is $(d+2)$-interlacing with any of the facets of $T_{i-1}$ or $\sigma \in T_{i-1}$ takes $O(n^2)$ time. 
If $\sigma$ is chosen to be added to $T_{i-1}$, it requires $O_d(n^2)$ more time to update $T_i$ (as repetition of faces are not allowed) and $O(n^2)$ time to update $L_i$.
Hence for a single $\sigma$ in Step $i$ we need $O(n^2)$ time.
This check is repeated in Step $i$ at most $O(n)$ times. The number of steps is at most $O(n^2)$ -- the total number of facets in $T$ --  hence  the total running time is $O_d(n^5)=O(n^5)$. 
\end{proof}

\smallskip
\textbf{Quantitative aspects.}
Theorem~\ref{thm_main}(ii) raises the question of how many new vertices suffice in order to extend:
\begin{pro}\label{prob:quantitative}
Let $d\ge 5$. Running over all geometric complexes $\Delta$ on an $n$-vertex set $V$ on the moment curve $\gamma_d$, find the minimum number $m(d,n)$ of new vertices that suffices for extension of $\Delta$ to a triangulation 
of $\conv(V)$, in the worst case.
\end{pro}

By Theorem~\ref{thm_main}(ii), for every $d\geq 5$ and $n\ge d+3$ we have $m(d,n)\ge 1$, but we were unable to find a better bound. There are studies on convex partitions of polyhedra \cite{chazelle_convex_partition, chazelle_palios_triangulating_nonconvex}, but in this context one may subdivide simplices of $\Delta$,
which we forbid in our setting.

\smallskip

\textbf{Connection to the lattice property.} Recall that when $d=2,3$,  Theorem \ref{lemma_lattice} guarantees that $\HST(n,d)$ is a lattice where the meet is given by submersion sets to satisfy 
\begin{align}
 \sub(T_1 \wedge T_2)=\sub(T_1) \cap \sub(T_2) \textrm{ for every $T_1,T_2\in S(n,d)$.} \label{condition_intersection}
\end{align}
 Condition (\ref{condition_intersection}) was important in our proof of Theorem \ref{thm_main}(i). 

However, imposing (\ref{condition_intersection}) for $\HST(n,d)$ is not necessary for extendability at one higher dimension $D=d+1$: While Proposition \ref{prop_n<=D+2} gives that given pairwise non-overlapping simplices on $\gamma_D$ on $n\leq D+2=d+3$ vertices are extendable to a triangulation in $S(n,D)$, it was shown 
that (\ref{condition_intersection}) is not satisfied when $d=D-1$ is $4$ or $5$ and $n=d+3=D+2$, see \cite[Section V, Remark 1]{edelman_cyclic_triangulation_envelope}. Still, $\HST(d+3,d)$ is a lattice (communicated from Nicholas J. Williams) and this can be seen, for instance, by the explicit description of this poset \cite[Lemma 3.1]{williams_twoordersequal}. However for $d=4, 5$, $\HST(d+5, d)$ is not a lattice \cite{edelman_rambau_reiner_subdivision_lattice_countereg, williams_stasheff_tamari_advances_lattice}.
See Table \ref{table:lattic_property} for a summary of known results.

\begin{table}[ht]
    \centering
    \begin{tabular}{|c||c|c|c|c|}
        \hline
         Conditions & $n=d+2$ & $n=d+3$ & $n=d+4$ & $n=d+5$ \\
         \hline
         \hline
         Condition (\ref{condition_intersection}) on  $\HST(n,d)$ & YES & NO \cite{edelman_cyclic_triangulation_envelope} & - & -\\
         \hline
         Extendability in $\RR^D=\RR^{d+1}$ & YES & YES & NO & NO\\
         \hline
        $\HST(n,d)$ being a lattice   & YES & YES \cite{williams_twoordersequal} & ? & NO \cite{edelman_rambau_reiner_subdivision_lattice_countereg, williams_stasheff_tamari_advances_lattice}\\
         \hline
    \end{tabular}
    \caption{Comparison between the lattice conditions and extendability when $d=4$ or $d=5$. Here $D=d+1$. The second row is from Theorem \ref{thm_main}(ii) and Proposition \ref{prop_n<=D+2}.}
    \label{table:lattic_property}
\end{table}

\begin{Q}
Is $\HST(d+4,d)$ a lattice for every $d \geq 4$?
\end{Q}

\smallskip

\textbf{Tightness for embedded $d$-simplices on $\gamma_d$.} 
Propositions~\ref{prop_eg_D=5_n=8} and~\ref{prop_countereg_extension_D} give a non-extendable example with respect to $d$-simplices that has three $d$-simplices on $\gamma_d$ (and on $d+3$ vertices), for every $d\ge 5$. 
%\begin{pro}\label{pro:2nonextendable}
%For $d\ge 5$, is there a collection of two $d$-simplices on $\gamma_d$ which cannot be extended to a triangulation of the convex hull of their union, without adding new vertices?
%\end{pro}
No such examples with one $d$-simplex exist, even if the vertex set includes extra vertices from the moment curve. 
\begin{lemma}\label{lem:extend_one}
Let $d\ge 2$ and $\sigma\subseteq V\subseteq \gamma_d$ such that $V$ is finite and $|\sigma|=d+1$. Then $\{\sigma\}$ can be extended to a triangulation of $\conv(V)$ without adding vertices not in $V$.    
\end{lemma}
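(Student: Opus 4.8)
The plan is to show that a single full-dimensional simplex $\sigma$ on $\gamma_d$ never obstructs extension, by building the triangulation of $\conv(V)$ explicitly via coning, exploiting the special position of $\sigma$'s vertices on the moment curve. Write $\sigma=\{v_0<v_1<\cdots<v_d\}$ and recall (Proposition~\ref{prop_gale}) that on $\gamma_d$ the facets of $\conv(\sigma)$, viewed inside $\conv(V)$, are determined purely by the Gale evenness condition; in particular a vertex $w\in V\setminus\sigma$ lying between two consecutive vertices $v_{j}<w<v_{j+1}$ of $\sigma$ is separated from the opposite facet of $\sigma$ in a controlled way. The key point is that inserting such a $w$ into a triangulation already containing $\sigma$ only creates new simplices that are cones over $w$ of facets of the current polytope \emph{visible} from $w$, and — as used repeatedly in Section~\ref{subsec_d=4} — visibility of a facet of $\conv(\sigma\cup\{\text{already inserted vertices}\})$ from a new moment-curve point $w$ is again read off combinatorially; crucially $\sigma$ itself, being the unique $d$-face on its vertex set, is never destroyed by coning since $w\cup(\text{facet})$ can only overlap $\sigma$ if the facet already overlapped $\sigma$, which it does not.

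\textbf{Key steps in order.} First, reduce to the case where $V\setminus\sigma$ lies entirely ``on one side'' by noting that $\conv(\sigma)$ partitions $\conv(V)$ into $\conv(\sigma)$ together with the regions cut off by the facets of $\conv(\sigma)$; by Proposition~\ref{prop_gale} each vertex of $V\setminus\sigma$ is separated from $\conv(\sigma)$ by a \emph{unique} facet $F$ of $\conv(\sigma)$ (its separating edge/face in the sense introduced before the proof of Lemma~\ref{lemma_LMR_LMRT}), so it suffices to triangulate, for each facet $F$ of $\sigma$, the polytope $\conv(F\cup W_F)$ where $W_F\subseteq V\setminus\sigma$ is the set of vertices separated by $F$, in a way compatible with keeping $F$ as a face; then glue these triangulations to $\{\sigma\}$ along the common facets $F$. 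Second, triangulate each piece $\conv(F\cup W_F)$: since $F$ is a facet of $\conv(F\cup W_F)$ as well (vertices of $W_F$ do not see the other facets through $F$), we may cone $F$ itself — more precisely, order $W_F$ so that inserting its vertices one at a time by $\cone(\cdot,w)$ keeps $F$ a face at every stage, exactly as in Step 2 of the proof of Theorem~\ref{thm_level_d=3}; the existence of such an ordering follows because at each stage the new vertex is separated from the current polytope by a facet \emph{not} equal to $F$. Third, conclude that the union over all facets $F$ of $\sigma$ of these triangulations, together with $\langle\sigma\rangle$, is a triangulation $T$ of $\conv(V)$ with $\sigma\in T$ and $V(T)=V$: the pieces meet only in faces of the $F$'s, $\sigma$ survives because no cone-face overlaps it (Proposition~\ref{prop_overlap}: a cone $\{w\}\cup\tau$ with $\tau\subseteq F$ is $(d+2)$-interlacing with $\sigma$ only if $\tau$ itself is, impossible as $\tau\subseteq F\subseteq\partial\sigma$), and the simplices cover $\conv(V)$ since they cover each region of the partition.

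\textbf{Main obstacle.} The routine part is the coning bookkeeping; the one point that needs genuine care is verifying that for each facet $F$ of $\sigma$ one can indeed order $W_F$ so that successive conings preserve $F$ as a face (equivalently, never make $F$ non-visible or create a simplex overlapping $F$). I expect to handle this by the same dual-tree / stacking argument as in Claim~\ref{claim_cone_edge}: the vertices of $W_F$, together with $F$, define a triangulation of $\conv(F\cup W_F)$ whose restriction near $F$ behaves like a stacked polytope over $F$, and any linear extension of the associated ``closer to $F$ first'' partial order works. A secondary subtlety is the degenerate case where some region cut off by a facet $F$ is empty ($W_F=\emptyset$), where that piece contributes nothing and $F$ simply remains a boundary facet of $T$; and the base case $V=\sigma$, where $T=\{\sigma\}$ (with its faces) trivially works. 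Since $d\ge 2$ guarantees $\conv(V)$ is genuinely $d$-dimensional and the moment-curve general position makes all the visibility computations combinatorial, no further difficulty arises.
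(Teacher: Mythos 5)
Your first reduction contains a genuine error that breaks the whole decomposition. You claim that each vertex $w\in V\setminus\sigma$ is separated from $\conv(\sigma)$ by a \emph{unique} facet $F$ of $\conv(\sigma)$, and that $\conv(V)$ is partitioned into $\conv(\sigma)$ together with the pieces $\conv(F\cup W_F)$. This is true for $d=2$ (where the ``separating edge'' notion from Subsection~\ref{subsec_combinatorial_lemma} lives), but it fails already for $d=3$. Take $\sigma=\{1,2,4,5\}\subseteq\gamma_3$ and $w=3$: by Proposition~\ref{prop_gale} the facets of $C(5,3)$ are $125,235,345,123,134,145$, so among the facets $124,125,145,245$ of the tetrahedron $\sigma$, exactly $124$ and $245$ fail to be facets of $\conv(\sigma\cup\{3\})$, i.e.\ the vertex $3$ is beyond \emph{two} facets of $\sigma$. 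Consequently the sets $W_F$ are not well defined (or overlap), the regions cut off by different facets of $\conv(\sigma)$ are not disjoint, and the pieces $\conv(F\cup W_F)$ neither partition $\conv(V)\setminus\conv(\sigma)$ nor meet only along faces of the $F$'s. The gluing step in your third paragraph therefore has nothing to glue, and the ``closer to $F$ first'' ordering argument in your second step inherits the same problem. (In general a point exterior to a $d$-simplex is beyond an arbitrary nonempty subset of its facets; being on the moment curve does not force this subset to be a singleton once $d\ge 3$.)

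The fix is to drop the decomposition entirely: no assignment of exterior vertices to facets of $\sigma$ is needed. Order $V\setminus\sigma$ arbitrarily as $v_1,\dots,v_l$, set $T_0=\langle\sigma\rangle$, and let $T_i=\cone(T_{i-1},v_i)$. The standard property of the coning (placing) construction is that if $T_{i-1}$ triangulates the polytope $\conv(\sigma\cup\{v_1,\dots,v_{i-1}\})$ and $v_i$ lies outside it, then $T_i$ triangulates $\conv(\sigma\cup\{v_1,\dots,v_i\})$ --- the new simplices are cones over \emph{all} facets visible from $v_i$, however many there are, so no uniqueness is required. Since coning only adds simplices and never removes any, $\sigma\in T_l$, and $T_l$ is the desired triangulation. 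This is exactly the argument the paper gives; your instinct that ``$\sigma$ survives because coning only adds faces'' is the correct core of the proof, but the facet-by-facet partition you built around it is both unnecessary and false in dimension $\ge 3$.
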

\begin{proof}
Order the vertices in $V\setminus \sigma$ in an arbitrary order $v_1,\ldots, v_l$. 
Let $T_0=\langle\sigma \rangle$, and 
$T_i=\cone(T_{i-1},v_i)$ for $i \in [l]$ (see Subsection \ref{subsec_d=4} for the definition). Then $T_i$ is a triangulation of $\conv(\sigma \cup \{v_1,\ldots v_i\})$, hence $T_l$ is a desired triangulation. 
\end{proof}
%\begin{remark}
Lemma~\ref{lem:extend_one} answers  \cite[Question 5.4(1)]{novik2024transversalnumberssimplicialpolytopes} in the affirmative.    
%\end{remark}
\begin{remark}\label{rem:Rambau}
J\"org Rambau (personal communication) showed the following stronger statement, covering the case of two simplices as well, using Gale duality for its proof:
\begin{prop}[Rambau] 
Let $V$ be a finite set in convex position in $\RR^d$, and $\sigma$ and $\tau$ be two non-overlapping $d$-simplices with vertices in $V$. Then there exists a regular triangulation of $\conv(V)$ containing $\sigma$ and $\tau$ whose vertex set is $V$.
\end{prop}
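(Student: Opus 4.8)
The plan is to pass to the secondary fan of $V$ (equivalently, the Gale transform) and to realize $\sigma$ and $\tau$ as cells of a common regular triangulation by exhibiting a height function lying in the interior of an intersection of two cones. Write $V=\{v_1,\dots,v_n\}$, set $\hat v_i=(v_i,1)\in\RR^{d+1}$, and fix a Gale transform $\bar v_1,\dots,\bar v_n\in\RR^{n-d-1}$ of $V$, so that the affine dependences of $V$ (vectors $z\in\RR^n$ with $\sum_i z_iv_i=0$ and $\sum_i z_i=0$) are exactly the vectors $\big(\langle\phi,\bar v_i\rangle\big)_{i\in[n]}$ for $\phi\in\RR^{n-d-1}$. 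Heights $\omega\in\RR^n$ differing by the restriction to $V$ of an affine functional induce the same regular subdivision; write $[\omega]$ for the image of $\omega$ in the quotient, which is canonically paired with $\RR^{n-d-1}$. For an index set $A\subseteq[n]$ of size $d+1$ with $\{v_i:i\in A\}$ affinely independent, put $C_A:=\cone(\bar v_j:\ j\in[n]\setminus A)$. The standard secondary-fan dictionary (see \cite{triangulations_book}) gives that the $d$-simplex on $\{v_i:i\in A\}$ is a cell of the regular triangulation $T_{[\omega]}$ if and only if $[\omega]\in\operatorname{int}(C_A)$; moreover $C_A$ is full-dimensional, since a linear dependence among $\{\bar v_j:j\notin A\}$ would be an affine functional on $\RR^d$ vanishing on the $d+1$ affinely independent points of $\sigma$, hence identically zero.

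Let $A,B\subseteq[n]$ be the vertex index sets of $\sigma,\tau$; I would first reduce the proposition to the claim $\operatorname{int}(C_A)\cap\operatorname{int}(C_B)\ne\emptyset$. Granting this, the intersection is a nonempty open cone, and the set of $[\omega]$ whose regular subdivision fails to be a triangulation is a finite union of lower-dimensional walls of the secondary fan; hence a generic $[\omega]\in\operatorname{int}(C_A)\cap\operatorname{int}(C_B)$ gives a regular triangulation $T_{[\omega]}$ with both $\sigma$ and $\tau$ as cells. Since $V$ is in convex position, every $v_i$ is a vertex of $\conv(V)$, so it is used by $T_{[\omega]}$; thus the vertex set of $T_{[\omega]}$ is exactly $V$. (We may assume $\sigma\ne\tau$, otherwise $C_A=C_B$ and any generic point of $\operatorname{int}(C_A)$ works, cf.\ Lemma~\ref{lem:extend_one}.)

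The core step is to derive a contradiction from $\operatorname{int}(C_A)\cap\operatorname{int}(C_B)=\emptyset$. As $C_A,C_B$ are full-dimensional polyhedral cones, a separating-hyperplane argument (the separating hyperplane passing through the origin) yields $\phi\ne 0$ with $\langle\phi,\bar v_j\rangle\ge 0$ for all $j\notin A$ and $\langle\phi,\bar v_k\rangle\le 0$ for all $k\notin B$. Then $z:=\big(\langle\phi,\bar v_i\rangle\big)_{i\in[n]}$ is a nonzero affine dependence of $V$ (nonzero because $\phi\mapsto z$ is injective) with $z_i\le 0$ for $i\in A\setminus B$, $z_i\ge 0$ for $i\in B\setminus A$, and $z_i=0$ for $i\notin A\cup B$. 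Put $P=\{i:z_i>0\}\subseteq B$, $N=\{i:z_i<0\}\subseteq A$, and $s=\sum_{i\in P}z_i=\sum_{i\in N}|z_i|>0$; then $p:=\tfrac1s\sum_{i\in P}z_iv_i=\tfrac1s\sum_{i\in N}|z_i|v_i$ lies in $\conv(\{v_i:i\in P\})\cap\conv(\{v_i:i\in N\})\subseteq\conv(\tau)\cap\conv(\sigma)$. Non-overlapping gives $\conv(\sigma)\cap\conv(\tau)=\conv(\sigma\cap\tau)$, so $p=\sum_{j\in A\cap B}\mu_jv_j$ with $\mu_j\ge 0$ and $\sum_j\mu_j=s$. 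Subtracting the two expressions for $sp$ yields $\sum_{i\in P}z_iv_i-\sum_{j\in A\cap B}\mu_jv_j=0$ with coefficient sum $0$, an affine dependence supported on $P\cup(A\cap B)\subseteq B$; since the $d+1$ vertices of $\tau$ are affinely independent, this dependence is trivial, forcing $z_i=0$ for $i\in P\setminus(A\cap B)$, i.e. $P\subseteq A\cap B$. Symmetrically, using affine independence of the vertices of $\sigma$, $N\subseteq A\cap B$. Hence $z$ is an affine dependence supported on $A\cap B\subseteq A$ among affinely independent points, so $z=0$, a contradiction. Therefore the cones meet, and the proof concludes as above.

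The main obstacle is exactly this last step: extracting from the combinatorial separating functional an affine dependence $z$ and then pinning its support inside $A\cap B$ by playing the non-overlapping hypothesis against the affine independence of the vertex sets of $\sigma$ and of $\tau$; the sign bookkeeping of $z$ on $A\setminus B$, $B\setminus A$, and $A\cap B$ is the delicate point. The secondary-fan dictionary, the full-dimensionality of $C_A$, and the genericity/convex-position remarks are routine.
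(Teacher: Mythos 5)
Your argument is correct: the secondary-fan dictionary reduces the claim to $\operatorname{int}(C_A)\cap\operatorname{int}(C_B)\neq\emptyset$, and your separation-plus-Radon computation --- extracting a nonzero affine dependence $z$ from the separating functional, locating the Radon point in $\conv(\sigma)\cap\conv(\tau)=\conv(\sigma\cap\tau)$, pinning the support of $z$ inside $A\cap B$, and killing $z$ by affine independence of $\sigma$ --- is sound. The paper contains no proof of this proposition (it is attributed to Rambau via personal communication, with the note that his proof uses Gale duality), so your route is exactly the one the paper indicates, and it is consistent with the dual-cone criterion the paper sketches in its Remark on the Gale transform.
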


\end{remark}
\section*{Acknowledgements}
The authors are grateful to Francisco Santos for his Remark \ref{remark_santos}, 
to Victor Reiner for inspiring comments on the lattice property of the higher Stasheff-Tamari poset, to Tillmann Miltzow for discussions on convex partitions of polyhedra, to J\"org Rambau for insightful comments around Proposition~\ref{prop_eg_D=5_n=8} and Remark~\ref{rem:Rambau}, 
and to G\"{u}nter M. Ziegler for helpful suggestions on the writing.
Our special thanks go to Nicholas J. Williams for many  helpful suggestions and for explaining to us the algebraic implication Corollary~\ref{cor:algebraic}.

\bibliographystyle{hplain}
\bibliography{bibliography}

\end{document}